\documentclass[a4paper,leqno]{amsart}

\usepackage{Kosanovic-macros}
\usepackage{amssymb,graphicx,mathtools,faktor,caption}

\usepackage{setspace}
\makeatother
  
\calclayout

\usepackage[
colorlinks=true,
citecolor={blue!80!black},
linkcolor={orange!70!purple},
urlcolor={olive!80!black},
linktoc=section
]{hyperref}

\usepackage[backend=biber,
style=alphabetic,
sorting=nyt,
giveninits=true,
maxnames=10,
backref,
backrefstyle=none,
]{biblatex}

\renewbibmacro{in:}{%
  \ifentrytype{article}{}{\printtext{\bibstring{in}\intitlepunct}}}

\DeclareFieldFormat[article,incollection,inproceedings]{title}{#1\isdot}
\DeclareFieldFormat[unpublished]{title}{\textit{#1}\isdot}

\DefineBibliographyStrings{english}{%
    backrefpage  = {p.}, 
    backrefpages = {pp.} 
}

\newtheorem{theorem}{Theorem}[section]
\newtheorem{lemma}[theorem]{Lemma}
\newtheorem{corollary}[theorem]{Corollary}
\newtheorem{alg}[theorem]{Algorithm}

\theoremstyle{definition}

\newtheorem{example}[theorem]{Example}
\newtheorem{question}[theorem]{Question}
\newtheorem{exercise}[theorem]{Exercise}

\theoremstyle{remark}
\newtheorem{remark}[theorem]{Remark}

\numberwithin{equation}{section}

\begin{document}

\title{Dancing circles}

\author{Danica Kosanovi\'c}
\address{Universit\"at Bern, Mathematisches Institut (MAI), Alpeneggstrasse 22, 3012 Bern, Switzerland}
\curraddr{CNRS, IMJ-PRG, Université Paris Cité, Sorbonne Université, F-75013 Paris, France}
\email{danica.kosanovic@cnrs.fr}
\thanks{I wish to thank the organizers of the 2025 Georgia International Topology Conference for the invitation to give a talk, Akram Alishahi and Gordana Matić for their effort in publishing these proceedings, and the referee for useful comments.}


\subjclass[2020]{Primary 57R40, 57K40, 57K45}

\date{}

\begin{abstract}
	Recent results on diffeomorphisms of $4$-manifolds make essential use of families of embeddings of $1$-manifolds into $4$-manifolds. We survey what is currently known about such families.
\end{abstract}

\maketitle

\section{Introduction}

Spaces of smooth embeddings are among the central objects in topology, the best-known example being the space $\Emb(\S^1,\R^3)$ of classical knots. 
A path between two points in this space is a smooth isotopy between knots, and knot theory studies the set $\pi_0\Emb(\S^1,\R^3)$ of path components, that is, isotopy classes of knots. Equivalently, we have a bijection $\pi_0\Emb(\S^1,\R^3)\cong\pi_0\Embp(\D^1,\D^3)$, so one can instead study the set of isotopy classes of embeddings of the arc $\D^1$ into the 3-ball, with fixed endpoints $\partial\D^1$, as on the left of Figure~\ref{fig:knot-ball}.
\begin{figure}[!htbp]
    \centering
    \includegraphics[width=0.18\linewidth]{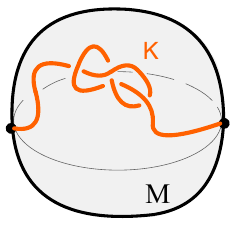}\hspace{1cm}
    \includegraphics[width=0.3\linewidth]{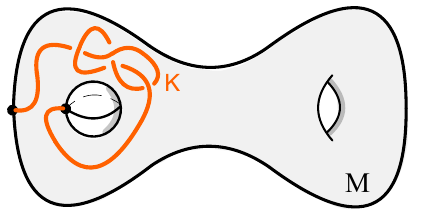}
    \caption{An embedding $K\colon C\hra M$ for $C=\D^1$ and respectively $M=\D^3$ and $M=(\S^1\tm\D^2)\setminus\D^3$.}
    \label{fig:knot-ball}
\end{figure}

More generally, one has spaces $\Embp(C,M)$ of smooth neat embeddings between compact smooth manifolds $C$ and $M$, whose points are smooth maps $K\colon C\to M$ such that $K$ is injective and for any $p\in C$ the derivative $dK|_p\colon T_pC\to T_{K(p)}M$ is injective. Moreover, we require that the boundary condition $K(C)\cap \nu\partial M=K(\nu\partial C)=e(\nu\partial C)$ is satisfied, where $\nu Y$ denotes a fixed tubular neighborhood of $Y\subset X$, and $e\colon C\hra M$ is a fixed basepoint embedding. This space is equipped with the Whitney $C^\infty$-topology: nearby embeddings are those whose derivatives are uniformly close.

Instead of just the set of path components of $\Embp(C,M)$ we can study its fundamental group and higher homotopy groups $\pi_n(\Embp(C,M), e)$, $n\geq1$, or (co)homology. In other words, in this general setting a nontrivial ``knot'' is a nontrivial homotopy class represented by an $n$-parameter family of embeddings of $C$ into $M$ which cannot be trivialized through such families. Thus, all embeddings are individually isotopic to $e$, but the whole family cannot be homotoped to the family $\const_e$, that is constantly equal to $e$.
Let us illustrate this with examples. 

\begin{example}[The trefoil and its generalizations]\label{ex:intro}
    \hfill
\begin{enumerate}
    \item (Classical.)
The (right-handed) trefoil knot
\[
    T\colon\D^1\hra\D^3
\]
can be obtained from the standard inclusion $\unkn\colon\D^1\hra\D^3$, called the unknot, by picking three subarcs of $\unkn$ and connect-summing them into the Borromean rings, as on the right of Figure~\ref{fig:Borr}. Equivalently, fix the meridians $m_1,m_2\colon\S^1\hra\D^3\setminus\unkn(\D^1)$ to $\unkn$ at two different points, and define $T$ by replacing a subarc of $\unkn$ with the arc describing the commutator $[m_1,m_2]\coloneqq m_1m_2m_1^{-1}m_2^{-1}$, as on the left of Figure~\ref{fig:Borr}.
\begin{figure}[!htbp]
    \centering
    \includegraphics[width=0.65\linewidth]{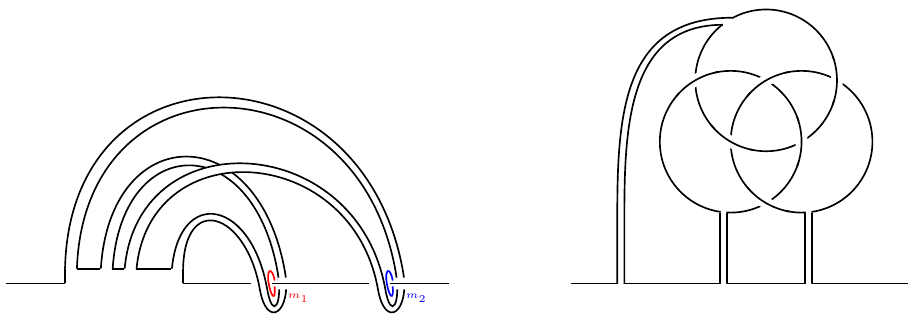}
    \caption{Two isotopic knots, obtained by degree $2$ clasper surgery on the unknot.}
    \label{fig:Borr}
\end{figure}
\item (Higher dimensional.)
    Fix $k\geq1$. Start with the standard embedding $e\colon\D^{4k-1}\hra\D^{6k}$, and pick two meridians $m_1,m_2\colon\S^{2k}\hra\D^{6k}\setminus e(\D^{4k-1})$, defined as the boundaries of normal disks at two points of $e(\D^{4k-1})$. Consider their Whitehead product 
    \[\begin{tikzcd}
        {[m_1,m_2]}\colon\S^{4k-1}\rar{Wh}
        & \S^{2k}\vee\S^{2k}\rar{m_1\vee m_2}
        & \D^{6k}\setminus e(\D^{4k-1}),
    \end{tikzcd}
    \]
    where the universal Whitehead product $Wh$ is defined as the attaching map of the top $4k$-cell in $\S^{2k}\times\S^{2k}$. It turns out that $[m_1,m_2]$ is homotopic to an embedding 
    \[
        [m_1,m_2]_{emb}\colon\S^{4k-1}\hra\D^{6k}\setminus e(\D^{4k-1}),
    \]
    as shown by Haefliger~\cite{Haefliger-knotted}.
    We can thus tube $e$ (ambiently connect sum) into the sphere $[m_1,m_2]_{emb}$ to obtain a new embedding $T\colon\D^{4k-1}\hra\D^{6k}$. This is precisely the Haefliger trefoil~\cite{Haefliger-codim-high}, which generates the group of isotopy classes 
    \[
        \pi_0\Embp(\D^{4k-1},\D^{6k})\cong\Z.
    \]
    In contrast, since the codimension is $2k+1>2$, this group is trivial in the $\mathsf{PL}$ and $\mathsf{Top}$ categories by Zeeman~\cite{Zeeman}.

\item (Dancing circles.)
    Fix $d\geq4$. A meridian of $\unkn\colon\D^1\hra \D^d$ is a sphere of dimension $d-2$. Thus, the Whitehead product of two meridians is a map
    \[\begin{tikzcd}
        {[m_1,m_2]}\colon\S^{2(d-3)+1}\rar{Wh}
        & \S^{d-2}\vee\S^{d-2}\rar{m_1\vee m_2}
        & \D^d\setminus \unkn(\D^1).
    \end{tikzcd}
    \]
    We can foliate this sphere by embedded circles and ambiently connect-sum a fixed subarc of $\unkn$ into each of these circles to obtain a $2(d-3)$-parameter family of embeddings of $\D^1$. We made this idea precise in~\cite{K-families} by introducing \emph{graspers}. It turns out that this trefoil grasper family generates
    \[
        \pi_{2(d-3)}(\Embp(\D^1,\D^d),\unkn)\cong\Z.\qedhere
    \]
\end{enumerate}
\end{example}

In this survey we consider the setting (3) for the case $d=4$. More generally, we will discuss for all $n\geq1$ the groups
\[
        \pi_n\big(\Embp(C,M),e\big)
\]
where $C=\D^1$ or $C=\S^1$ and $M$ is a $4$-dimensional smooth manifold.
This somewhat underdeveloped version of knot theory has gained more attention recently, as we briefly indicate below. We will see that this topic unites the viewpoints of low-dimensional topology on the one hand, and algebraic topology and homotopy theory, on the other. As sketched in §\ref{sec:beyond}, the algebraic side can benefit from geometric constructions, and the geometric side from the powerful formal language.

\subsection*{One motivation: diffeomorphisms}
Let us mention one recent result that motivates the study of spaces of embeddings of arcs and circles.

Not a single mapping class group of a smooth 4-manifold has been computed. Notably, the question of nontriviality of $\pi_0\Diff(\S^4)$ remains open. However, we do know that $\Diff(\S^4)$ is not homotopy equivalent to its subgroup of orthogonal maps $O(5)$, unlike its lower dimensional analogues $\Diff(\S^d)\simeq O(d+1)$ for $d\leq 3$. This is due to Watanabe~\cite{Watanabe-dim-4}, who in particular showed that $\pi_1(\Diff(\S^4);\Id)$ has an infinite order element. In fact, Watanabe's construction also works in degree zero, giving a candidate class
\[
    \Theta\in\pi_0\Diff(\S^4).
\]
It has been shown by Gay and Hartman~\cite{Gay-Hartman} that this class is either trivial or 2-torsion $\Theta^2=[\Id]$, and independently by the author~\cite{K-diffeos-from-graspers} and Gay~\cite{Gay-theta} that it agrees with the more recent construction due to Budney and Gabai~\cite{Budney-Gabai} in terms of barbells. 
In~\cite{K-diffeos-from-graspers} such barbell diffeomorphisms of a $4$-manifold $\X$ are described as those in the image of the \emph{parametrized surgery map} 
\[
    \ps\colon
\pi_1\big(\Emb(\nu\S^1,X\#\S^1\tm\S^3);\nu\cknot\big)
	\ra
\pi_0\Diffp(X\#\S^1\tm\D^3)
    \ra
\pi_0\Diffp(X).
\]
The source is the fundamental group of the space of framed embeddings of $\S^1$ into $X\#\S^1\tm\S^3$, based at $\nu c=\S^1\tm\D^3\subset\S^1\tm\S^3$. The first map is given by ambient isotopy extension followed by restriction to the complement of $\nu\cknot$. The second map extends by the identity over $\D^2\tm\S^2$, glued to the $\S^1\tm\D^3$ factor so that $X\#\S^4\cong X$.

In this breakthrough work Budney and Gabai consider the homomorphisms
    \begin{equation}\label{eq:BG}
    \begin{tikzcd}[scale cd=0.95, column sep=small]
        \pi_1\big(\Emb(\nu\S^1,\S^1\tm\D^3\#\S^1\tm\S^3);\nu\cknot\big)
            \rar{\ps}
        & \pi_0\Diffp(\S^1\tm\D^3)
            \dar{\res_D} 
        &\\
        & \pi_0\Embp(\D^3,\S^1\tm\D^3)
            \rar{\fol_2}
        & \pi_2\Embp(\D^1,\S^1\tm\D^3).
    \end{tikzcd}
    \end{equation}
The map $\res_D$ is the restriction of the diffeomorphism to $\{pt\}\tm\D^3\subset \S^1\tm\D^3$, whereas $\fol_2$ foliates the given embedded 3-ball into a 2-parameter family of embedded arcs.
 
\begin{theorem}[Budney--Gabai~\cite{Budney-Gabai}]
    The sequence of maps~\eqref{eq:BG} has an infinitely generated free abelian subgroup in its image. In particular, we have $\Z^\infty<\pi_0\Diffp(\S^1\tm\D^3)$ and $\Z^\infty<\pi_0\Embp(\D^3,\S^1\tm\D^3)$.
\end{theorem}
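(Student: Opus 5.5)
The plan is to detect the image by computing $\pi_2\Embp(\D^1,\S^1\tm\D^3)$ and finding an explicit infinite-rank free abelian subgroup that is hit by the composite~\eqref{eq:BG}. Since all maps in~\eqref{eq:BG} are homomorphisms, this immediately gives the two consequences: a subgroup $\Z^\infty$ in the final target that lies in the image of the composite lifts to $\Z^\infty$ in each intermediate group. Indeed, pick preimages of a basis; any relation among them would map to a relation downstairs. So it suffices to work in the last group.

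First, describe $\pi_2\Embp(\D^1,\S^1\tm\D^3)$ at least in low degree. I would use the fibration sequence obtained by evaluating on the tangent direction, or the embedding calculus / Dax-type description: $\pi_2\Embp(\D^1,M^4)$ receives a map from $\pi_3$ of the space of immersions (trivial-ish here), and the interesting part is detected by a Dax invariant. Concretely, a $2$-parameter family of arcs gives a $3$-parameter family of maps with generic double points. Counting these double points with their group elements gives an invariant $\Dax$ with values in $\Z[\pi_1 M\setminus 1]=\Z[t^{\pm1}]\setminus\Z\cdot 1$, modulo a symmetry $t^k\sim\pm t^{-k}$ and modulo the image of $\pi_3$ of the immersion space. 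I expect the outcome to be that this invariant is a surjection onto a free abelian group with basis indexed by $\{t^k:k\ge 2\}$ or so, detecting infinitely many independent classes.

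Second, construct explicit barbell (or grasper) loops in $\pi_1\Emb(\nu\S^1,\S^1\tm\D^3\#\S^1\tm\S^3)$, one for each $k$. In each, a circle travels $k$ times around the $\S^1$ factor while spinning about the $\S^3$ sphere. Then trace them through $\ps$, $\res_D$ and $\fol_2$. The resulting $2$-parameter family of arcs should be a grasper family of the type in Example~\ref{ex:intro}(3), built from meridians decorated by $t^k$.

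Third, compute the value of $\Dax$ on these families, which should be of the form $t^k\pm t^{-k}$ plus lower terms. Independence then follows by a triangular argument. The main obstacle is this computation, together with the correct identification of the indeterminacy. One must check that the relations coming from the immersion part and the symmetry do not kill the classes, which is what requires care with signs. I would first try to arrange the families so that each has exactly one pair of double-point pairs, making the count transparent.
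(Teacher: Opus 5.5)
\textbf{There is a genuine gap, in your first and third steps: the invariant you plan to use cannot detect these classes.} A signed count of double points is second-stage information: Haefliger's model, i.e.\ $T_2$ in the tower of Section~\ref{subsec:emb-calc}; cf.\ Appendix~\ref{app:Dax}. For arcs in a $4$-manifold this approximation is only $(2d-3-3c)=2$-connected, so it determines $\pi_0$ and $\pi_1$ but not $\pi_2$.

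Concretely, a regular nullhomotopy of an $\S^2$-family of arcs is a $3$-parameter family. Its track $I^3\tm\D^1\to I^3\tm M$ maps a $4$-manifold into a $7$-manifold, so its double points form a $1$-manifold. There is therefore no finite signed count with values in $\Z[\pi_1M\setminus1]$.

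In fact the whole second stage is blind here. For $M=\S^1\tm\D^3$ the layer $F_2\simeq\Omega^2\hofib(M\vee\S^3\to M)$ has $\pi_2F_2\cong\pi_4(\bigvee_{\Z}\S^3)\cong\bigoplus_{\Z}\Z/2$. Moreover, $\pi_2T_2\to\pi_2T_1\cong\Z$ is zero, because $\delta$ sends the generator to the cusp class $1\in\Z[\pi_1M]$. Hence $\pi_2T_2$ is $2$-torsion, and no invariant factoring through it can see a $\Z^\infty$.

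This is consistent with the fact that $\pi_n\ev_n\circ\realmap_n=0$: the degree-$2$ grasper families you expect in your second step are exactly the classes invisible to double points. The simplest sanity check is $M=\D^4$. There $\pi_2\Embp(\D^1,\D^4)\cong\Z$ is generated by the trefoil grasper family of Example~\ref{ex:intro}(3), while $\Z[\pi_1\D^4\setminus1]=0$.

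\textbf{What is needed is a three-point invariant from the third stage.} The survey only quotes this theorem, but this is what Budney--Gabai~\cite{Budney-Gabai} do. They work at the third stage of the Goodwillie--Weiss tower, which sees all of $\pi_2$ by property (2) in Section~\ref{subsec:emb-calc}. There they define an invariant $W_3$ with values in a quotient of the two-variable ring $\Z[t_1^{\pm1},t_2^{\pm1}]\cong\Z[(\pi_1M)^2]$ by symmetry relations, and evaluate it on the scans of explicit barbell diffeomorphisms.

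In the language of Section~\ref{subsec:emb-calc}, degree-$2$ grasper families are decorated by pairs $(g_1,g_2)\in(\pi_1M)^2$. They are detected in $\pi_2F_3\cong\Z[\Tree(2)\otm(\pi_1M)^2]$ via $\chi_3$, and the indeterminacy is $\delta(\pi_3T_2)$, not the image of $\pi_3$ of immersions. So the real content of a proof has two parts. First, identify $\fol_2\circ\res_D\circ\ps$ of explicit classes as specific two-variable Laurent polynomials. Second, show that infinitely many of these are linearly independent modulo those relations. Your one-variable bookkeeping ``$t^k\pm t^{-k}$'' has no place in this.

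Your reduction of the ``in particular'' clause to the last group is fine once you note that the chosen lifts commute. For instance, $\pi_0\Diffp(\S^1\tm\D^3)$ is abelian, since supports can be isotoped into disjoint slabs $\S^1\tm\D^2\tm[0,\frac12]$ and $\S^1\tm\D^2\tm[\frac12,1]$. Without this, ``no relations downstairs'' alone does not produce a free \emph{abelian} subgroup upstairs.
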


\subsection*{Literature}
We will survey recent developments in the study of spaces of smooth embeddings of arcs and circles, based on the classical work of Smale~\cite{Smale}, Whitney~\cite{Whitney-n-in-2n}, Wall~\cite{Wall-surgery}, Haefliger~\cite{Haefliger-plongements}, and Dax~\cite{Dax}. 

We first focus on computations of the fundamental group, obtained recently by Gabai~\cite{Gabai-disks}, the author~\cite{K-Dax,K-Dax2}, and the author with Teichner~\cite{KT-highd,KT-4dLBT}. Related work was done by Schwartz~\cite{Schwartz} and Lin--Wu--Xie--Zhang~\cite{Lin-Wu-Xie-Zhang}, and in the topological category by Cha--Kim~\cite{Cha-Kim}. In particular, we explain how to compute $\pi_1(\Emb(\S^1,X\#\S^1\tm\S^3);\cknot)$ --- see Corollary~\ref{cor:main}. For instance, we will compute $\pi_1(\Emb(\S^1,\S^1\tm\D^3\#\S^1\tm\S^3);\cknot)$ in Example~\ref{ex:intro-circles}. For computations of framed 1-families we refer to \cite{K-Dax2}.

In the last section we discuss an approach to higher homotopy groups of these embedding spaces due to Goodwillie, Klein, and Weiss~\cite{GW,GKW,GKmultiple}, and its developments by the author in~\cite{K-thesis-paper,K-families}, based on the Gusarov--Habiro theory~\cite{Gusarov-main,Gusarov,Habiro} and the work of Conant and Teichner~\cite{CT1}.

\subsection*{Outline}

In §\ref{sec:imm} we summarize what is known about immersion spaces $\Immp(\D^1,\X)$ and $\Imm(\S^1,N)$ for smooth $d$-manifolds $\X$ and $N$, with $\partial X\neq\emptyset$. In particular, in §\ref{subsec:imm-arcs} we compute
\[
\pi_n(\Immp(\D^1,\X),\unkn)\cong\pi_{n+1}\S^3\oplus\pi_{n+1}\X,
\]
and in §\ref{subsec:imm-circles} we express $\pi_1(\Imm(\S^1,N),\cknot)$ as an extension of the subgroup $\Fix_{[\cknot]}(\pi_1N)$ of the elements of $\pi_1N$ fixed by $[\cknot]\in\pi_1N$, by the abelian group ${\pi_2N}/{b\sim[\cknot]\cdot b}$.

After discussing Wall's self-intersection invariants $\mu$ in §\ref{subsec:wall} for inspiration, in §\ref{subsec:Dax} we define the {Dax invariant} $\Dax$ of regular nullhomotopies. 

In §\ref{sec:fund-gp} we use $\Dax$ to compute the fundamental groups of embedding spaces $\Embp(\D^1,\X)$ and $\Emb(\S^1,N)$. For this we also need the Dax homomorphism $\dax$ from §\ref{subsec:dax-homomorphism} and its variant $\dax^{whisk}$ from §\ref{subsec:dax-whisk}. In §\ref{subsec:Dax-Wall} we point out an interesting relation between $\dax$ and the Wall invariants, discovered in~\cite{KT-4dLBT}.

In §\ref{sec:examples} we present example computations of the fundamental group of embedding spaces, both for arcs and circles.

In §\ref{sec:beyond} we discuss approaches to higher homotopy groups of these embedding spaces: in §\ref{subsec:graspers} we define graspers, in §\ref{subsec:realmap} {grasper} families, and in §\ref{subsec:emb-calc} we briefly introduce {embedding calculus} of Goodwillie and Weiss, and explain how the preceding results fit into that framework.

Finally, in Appendix~\ref{app:Smale} we summarize the proof of Smale's equivalence for immersions of arcs, and in Appendix~\ref{app:WHD} we summarize the history of results in the ``metastable range''.

\subsection*{Figures used}

The following figures are borrowed from the author's previous papers:
Figures~\ref{fig:Borr} and~\ref{fig:grasper} from~\cite{K-families}, Figure~\ref{fig:realmap-Dax} from~\cite{K-diffeos-from-graspers}, Figures~\ref{fig:u-whiskered} and~\ref{fig:Phi} from~\cite{K-Dax}. However, Figures~\ref{fig:knot-ball},~\ref{fig:g+g-inv},~\ref{fig:compute}, and~\ref{fig:tree} appear here for the first time.

\subsection*{Tools used} 

AI tools were used at the final stage of preparing this manuscript to identify grammatical mistakes, notational inconsistencies, and minor mathematical errors.

\section{Immersions}\label{sec:imm}

Consider the space $\Immp(C,M)$ of smooth immersions, whose points are smooth maps $K\colon C\to M$ such that for any $p\in C$ the derivative $dK|_p\colon T_pC\to T_{K(p)}M$ is injective, and we have $K(C)\cap \nu(\partial M)=K(\nu(\partial C))=e(\nu(\partial C))$. The only difference from the space of embeddings lies in dropping the injectivity of the map $K$. In particular, we have inclusions
\begin{equation}\label{eq:incl}
    \Embp(C,M)\hra \Immp(C,M)\hra C^\infty(C,M).
\end{equation}
For a smooth map $K$ to be an immersion we only need to check the \textit{local} condition on injectivity of its derivative at each point. On the other hand, for an immersion $K$ to be an embedding we need to check the \textit{global} condition on the injectivity of $K$: are there two points $p_1,p_2\in C$ with $p_1\neq p_2$ but $K(p_1)=K(p_2)$?

It turns out that this key difference makes immersions amenable to the techniques of algebraic topology, whereas embeddings seem to evade them --- see, however, §\ref{subsec:emb-calc}.

\subsection{Immersed arcs}\label{subsec:imm-arcs}

The first breakthrough in the study of immersions was Smale's paper~\cite{Smale}, which showed the existence of a homotopy equivalence
\begin{equation}\label{eq:Smale}
    \begin{tikzcd}
    \Immp(\D^1,X)\rar{\deriv}[below]{\sim}
    & \Map_\partial(\D^1,\S TX).
\end{tikzcd}
\end{equation}
On the left-hand side we have the space of immersions of the interval into a smooth $d$-manifold $\X$, which near the boundary agree with the basepoint immersion $\unkn\colon\D^1\imra X$. For $\unkn$ we assume $\unkn(\pm1)=x_{\pm}\in\partial X$. On the right-hand side we have the space of paths in the unit tangent bundle of $\X$ that begin and end respectively at the unit derivative of $\unkn$ at $-1$ and $1$. The map from the left to the right is the unit derivative (the derivative vector scaled to have norm $1$). We outline the proof of \eqref{eq:Smale} in Appendix~\ref{app:Smale}.

Moreover, there is a homotopy equivalence to the based loop space
\begin{equation}\label{eq:loop-space}
    \begin{tikzcd}
    -\cdot(\deriv\unkn)^{-1}\colon\Map_\partial(\D^1,\S TX)\rar
    & \Omega (\S TX)
\end{tikzcd}
\end{equation}
by concatenating a path in $\S TX$ with the reverse of the path given by the derivatives of $\unkn$; the map $\gamma\mapsto\gamma\cdot\deriv\unkn$ defines a homotopy inverse.

The space $\S TX$ is a bundle over $\X$ with fiber $\S^{d-1}$. This bundle admits a section because the Euler class vanishes, as $\X$ has nonempty boundary. This implies that there is a homotopy equivalence
\[
    \Omega (\S TX)\simeq \Omega (X \tm \S^{d-1}).
\]

\begin{corollary}\label{cor:imarcs}
    Sending $k\in\ImArcs{X}$ to the loop $p_{\X,\unkn}(k)\coloneqq k\cdot\unkn^{-1}$ based at $\unkn(-1)$ gives a map
\begin{equation}\label{eq:pu}
    p_{\X,\unkn}\colon\ImArcs{X}\to\Omega X,
\end{equation}
which in degrees $n\leq d-3$ induces isomorphisms 
\[
	\pi_n(\Immp(\D^1,\X),\unkn)\cong\pi_n(\Omega X,\const_{\unkn(-1)})\cong\pi_{n+1}(X,\unkn(-1)),
\] 
whereas in degree $n=d-2$ we have
\[
    \pi_{d-2}(\Immp(\D^1,\X),\unkn)\cong\Z\oplus\pi_{d-1}\X.
\]
\end{corollary}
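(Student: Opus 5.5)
We combine Smale's equivalence \eqref{eq:Smale}, the loop-space equivalence \eqref{eq:loop-space}, and the trivialization $\Omega(\S TX)\simeq\Omega(X\tm\S^{d-1})$ coming from a nonvanishing section $s$ of $TX$. This gives a chain of homotopy equivalences
\[
\Immp(\D^1,X)\simeq\Omega(\S TX)\simeq\Omega X\tm\Omega\S^{d-1}.
\]
Taking homotopy groups,
\[
\pi_n(\Immp(\D^1,X),\unkn)\cong\pi_{n+1}(X)\oplus\pi_{n+1}(\S^{d-1}).
\]
For $n=1$ the first summand may be nonabelian; the direct sum is a product of groups, and the second factor is trivial or abelian. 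The splitting uses the section as follows. The bundle projection $\S TX\to X$ induces a fibration $\Omega\S^{d-1}\to\Omega\S TX\to\Omega X$. The section $s$ splits the long exact sequence, and fiberwise comparison with $s$ gives a map $\S TX\to\S^{d-1}$ only up to homotopy over the image of $X$. The cleaner route is to cite the stated equivalence $\Omega(\S TX)\simeq\Omega(X\tm\S^{d-1})$ directly.

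The next step is to check that the projection to the $\Omega X$ factor is exactly $p_{X,\unkn}$ from \eqref{eq:pu}. The composite $\Immp(\D^1,X)\to\Omega\S TX\to\Omega X$ sends $k$ to the basepoint projection of $\deriv k\cdot(\deriv\unkn)^{-1}$. The footpoint of $\deriv k$ is $k$ itself, so this is $k\cdot\unkn^{-1}$, as required. Hence the map $p_{X,\unkn}$ induces on $\pi_n$ the projection onto the summand $\pi_{n+1}X$. It is an isomorphism precisely when $\pi_{n+1}\S^{d-1}=0$, which holds for $n+1\le d-2$, i.e.\ $n\le d-3$. This gives the first claim, together with the standard identification $\pi_n(\Omega X,\const)\cong\pi_{n+1}(X,\unkn(-1))$.

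For $n=d-2$ we have $\pi_{d-1}\S^{d-1}\cong\Z$, so
\[
\pi_{d-2}(\Immp(\D^1,X),\unkn)\cong\Z\oplus\pi_{d-1}X.
\]
This group is abelian, since $d-2\ge2$ for $d\ge4$; for small $d$ the statement is still formal.

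The only delicate point is the naturality check: the equivalence $\Omega\S TX\simeq\Omega X\tm\Omega\S^{d-1}$ must be compatible with the projection to $X$. We handle this by building it from the section. Given $\gamma\in\Omega\S TX$ with footpoint loop $\bar\gamma$, the section $s$ trivializes $\bar\gamma^*\S TX$ up to a choice. A more robust option is to use the split fibration sequence, where the splitting map is $\Omega s$. The homotopy-group splitting then needs only the long exact sequence, and that suffices for the corollary.
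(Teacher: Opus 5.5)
Your proof is correct and follows the paper's route. You combine Smale's equivalence \eqref{eq:Smale}, the loop-space equivalence \eqref{eq:loop-space}, and the section of $\S TX\to X$ (which exists because $\partial X\neq\emptyset$) to split the long exact sequence of $\S^{d-1}\to\S TX\to X$, and you also check explicitly, which the paper leaves implicit, that projecting to $\Omega X$ recovers the map \eqref{eq:pu}. One harmless slip: for $n=1$ the summand $\pi_1(\Omega X)\cong\pi_2X$ is abelian, so your caveat about a nonabelian factor is not needed for any $n\geq1$.
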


Of particular relevance for us is the following case.
\begin{corollary}
    For a smooth 4-manifold $\X$ with nonempty boundary we have
    \[
        \pi_0\ImArcs{\X}\cong\pi_1\X,\quad
        \pi_1\ImArcs{\X}\cong\pi_2\X,\quad
        \pi_2\ImArcs{\X}\cong\Z\oplus\pi_3\X.
    \]
\end{corollary}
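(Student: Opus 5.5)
This is the case $d=4$ of Corollary~\ref{cor:imarcs}, so I will specialize that corollary and justify each of the three degrees.

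\textbf{Starting point.} Combine Smale's equivalence \eqref{eq:Smale}, the loop-space equivalence \eqref{eq:loop-space}, and the splitting $\Omega(\S TX)\simeq\Omega(X\tm\S^{3})$. The splitting holds because $\partial X\neq\emptyset$: then $\S TX\to X$ admits a section, and a choice of section trivializes the relevant loop space up to homotopy.

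\textbf{Homotopy groups.} Since $\Omega$ commutes with products up to homotopy, for all $n\geq0$ we get
\[
\pi_n\ImArcs{X}\cong\pi_n\Omega(X\tm\S^3)\cong\pi_{n+1}X\oplus\pi_{n+1}\S^3 .
\]
For $n=0$ the right side is $\pi_1X\tm\pi_1\S^3$, a product of groups rather than a direct sum of abelian groups; the formula still holds as a bijection of pointed sets, in fact as groups.

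\textbf{The three degrees.} Now use $\pi_1\S^3=\pi_2\S^3=0$ and $\pi_3\S^3\cong\Z$. This gives
\[
\pi_0\ImArcs{X}\cong\pi_1X,\qquad \pi_1\ImArcs{X}\cong\pi_2X,\qquad \pi_2\ImArcs{X}\cong\Z\oplus\pi_3X .
\]
These are exactly the ranges $n\leq d-3=1$ and $n=d-2=2$ of Corollary~\ref{cor:imarcs}. In the range $n\leq1$ the isomorphism is induced by the map $p_{X,\unkn}$ of \eqref{eq:pu}, because the $\S^3$ factor contributes nothing there.

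\textbf{Main subtlety.} The only delicate point is whether the section-induced equivalence $\Omega\S TX\simeq\Omega X\tm\Omega\S^3$ is an equivalence of $H$-spaces. This matters only for the group structure in degree $0$. If needed, I would use the long exact sequence of the fibration $\Omega\S^3\to\Omega\S TX\to\Omega X$. This sequence splits via the section, and $\pi_0\Omega\S^3=0$, which gives $\pi_0\cong\pi_1X$ as groups directly.
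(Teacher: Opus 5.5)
Your proposal is correct and follows the paper's route: the statement is the $d=4$ case of Corollary~\ref{cor:imarcs}, which comes from Smale's equivalence, the loop-space equivalence, and the section-induced equivalence $\Omega(\S TX)\simeq\Omega(X\tm\S^{3})$, combined with $\pi_1\S^3=\pi_2\S^3=0$ and $\pi_3\S^3\cong\Z$. Your worry about $H$-space structures in degree $0$ is harmless and essentially moot, since the fibration sequence already gives $\pi_1\S TX\cong\pi_1X$ because $\S^3$ is simply connected.
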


\subsection{Immersed circles}\label{subsec:imm-circles}

Now fix a smooth $d$-manifold $N$, possibly with empty boundary.
It is not hard to derive the homotopy equivalence
\begin{equation}\label{eq:Smale-circles}
    \begin{tikzcd}
    \Imm(\S^1,N)\rar{\deriv}[below]{\sim}
    & \Map(\S^1,\S TN)
\end{tikzcd}
\end{equation}
from Smale's equivalence~\eqref{eq:Smale}; see \cite[Prop.4.1]{K-Dax}. Alternatively, this follows from the general Smale--Hirsch h-principle for immersions.
\begin{remark}
    It is worth mentioning that Whitney~\cite{Whitney-curves-in-plane} classified immersed circles in the plane up to regular homotopy, the result known as Whitney--Graustein Theorem. We will see how it follows from~\eqref{eq:Smale-circles} in Exercise~\ref{ex:Whitney-Graustein1}.
\end{remark}

Homotopy groups of free loop spaces like $\Map(\S^1,\S TN)$ can be expressed as follows. Let $[\deriv\cknot]\in\pi_1\S TN$ denote the homotopy class of $\deriv\cknot$, and $[\cknot]\in\pi_1N$ the homotopy class of the loop $\cknot$. Let $[\deriv\cknot]\cdot b$ denote the usual action of $[\deriv\cknot]\in\pi_1\S TN$ on $b\in\pi_n\S TN$; for $n=1$ this is just the conjugation action $[\deriv\cknot]\cdot b=[\deriv\cknot]\, b\, [\deriv\cknot]^{-1}$. Finally, denote 
\[
    \Fix_{[\deriv\cknot]}(\pi_n\S TN)\coloneqq \big\langle b\in\pi_n\S TN: b=[\deriv\cknot]\cdot b\big\rangle.
\]
\begin{lemma}[{\cite[Cor.4.2]{K-Dax}}]\label{lem:imm-htpy-gps}
   There is a bijection between $\pi_0\Imm(\S^1,N)$ and the set of orbits of the conjugation action on $\pi_1\S TN$, and for all $n\geq1$ there are group extensions
    \[\begin{tikzcd}
        \faktor{\pi_{n+1}\S TN}{b\sim[\deriv\cknot]\cdot b}\rar[tail] 
        &  \pi_n(\Imm(\S^1,N),\cknot)\rar[two heads] 
        & \Fix_{[\deriv\cknot]}(\pi_n\S TN).
    \end{tikzcd}\]
    Moreover, for $n\leq d-2$ we have $\pi_n\S TN\cong\pi_nN$.
\end{lemma}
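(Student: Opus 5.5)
By \eqref{eq:Smale-circles}, the unit derivative $\deriv$ is a homotopy equivalence $\Imm(\S^1,N)\simeq\Map(\S^1,\S TN)$, sending $\cknot$ to $\deriv\cknot$. So it suffices to compute the homotopy groups of the free loop space $\mathcal{L}Y\coloneqq\Map(\S^1,Y)$ of the connected space $Y=\S TN$, based at the loop $\deriv\cknot$. If $N$ is disconnected we work componentwise.

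\emph{Path components.} Free homotopy classes of loops in a path-connected space are in bijection with conjugacy classes of $\pi_1Y$: any free loop can be homotoped to a based loop, and a free homotopy between based loops sweeps the basepoint along a loop $g$, which conjugates the class by $g$. This gives the claimed description of $\pi_0\Imm(\S^1,N)$.

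\emph{Higher groups.} We use the evaluation fibration
\[
\Omega Y\to\mathcal{L}Y\xrightarrow{\ \mathrm{ev}_1\ }Y,
\]
restricted to the component of $\deriv\cknot$. Its fiber over the basepoint is the space $\Omega Y$ of based loops. The fiber inclusion lands in the path component of $\deriv\cknot$, and $\pi_n(\Omega Y,\deriv\cknot)\cong\pi_{n+1}Y$ via translation by the loop $\deriv\cknot$.

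The key step is identifying the boundary map $\partial\colon\pi_{n+1}Y\to\pi_{n}(\Omega Y,\deriv\cknot)\cong\pi_{n+1}Y$. Take $a\in\pi_nY$ represented by a map $\S^n\to Y$. Lift it by sliding the free loop $\deriv\cknot$ along the sphere, and look at the family of loops over the top cell. Unwinding this, $\partial(a)$ is the Samelson/Whitehead-type commutator of $a$ with $[\deriv\cknot]$, i.e.\ $a-[\deriv\cknot]\cdot a$ after shifting degree. Concretely, we will use the standard fact for free loop spaces that the boundary map $\pi_{k}Y\to\pi_{k}Y$ (with $k=n+1$ in the fiber, $k=n$ in the base) is $b\mapsto b-[\deriv\cknot]\cdot b$. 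For $k=1$ this is $g\mapsto g\,[\deriv\cknot]\,g^{-1}[\deriv\cknot]^{-1}$.

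Once this is established, the long exact sequence gives
\[
\pi_{n+1}Y\xrightarrow{\ b\mapsto b-[\deriv\cknot]\cdot b\ }\pi_{n+1}Y\to\pi_n\mathcal{L}Y\to\pi_nY\xrightarrow{\ b\mapsto b-[\deriv\cknot]\cdot b\ }\pi_nY.
\]
The cokernel of the first map is $\pi_{n+1}Y/(b\sim[\deriv\cknot]\cdot b)$. The kernel of the last map is $\Fix_{[\deriv\cknot]}(\pi_nY)$. For $n=1$ this kernel is the centralizer subgroup, and for $n\geq2$ the fixed subgroup. Together these give the extension.

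I expect the main obstacle to be computing $\partial$ carefully, including signs and basepoint conventions. The cleanest route is to use that $\mathcal{L}Y\to Y$ has the section of constant loops only when the basepoint loop is trivial. In general, I would instead write $\S^1\times\S^n\to Y$ on the cell structure $\S^1\vee\S^n\cup e^{n+1}$ and read off the boundary from the attaching map of the top cell. That attaching map is the Whitehead product $[\iota_1,\iota_n]$, whose image is $b-[\deriv\cknot]\cdot b$ (and the commutator for $n=1$).

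\emph{Last claim.} The long exact sequence of $\S^{d-1}\to\S TN\to N$ gives $\pi_n\S TN\cong\pi_nN$ for $n\leq d-3$. It still gives an isomorphism for $n=d-2$: injectivity follows because $\pi_{d-2}\S^{d-1}=0$, and surjectivity because $\pi_{d-2}N\to\pi_{d-3}\S^{d-1}$ is zero, the target being trivial for $d\geq3$.
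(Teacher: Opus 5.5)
Your proposal is correct and takes the same route as the cited proof: Smale's equivalence reduces the statement to the free loop space $\Map(\S^1,\S TN)$, the long exact sequence of the evaluation fibration $\Omega\S TN\to\Map(\S^1,\S TN)\to\S TN$ gives the extension, and the sphere-bundle sequence gives $\pi_n\S TN\cong\pi_nN$ for $n\leq d-2$. The only step you leave implicit is the standard fact that the connecting map is the Whitehead product with $[\deriv\cknot]$, namely $b\mapsto b-[\deriv\cknot]\cdot b$, which is a commutator in degree one; your sketch via the top cell of $\S^1\times\S^n$ is an adequate way to justify it.
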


In particular, the subgroup $\Fix_{[\cknot]}(\pi_1N)\coloneqq \big\langle b\in\pi_1N: b=[\cknot] b [\cknot]^{-1} \big\rangle < \pi_1N$ is the centralizer of $[\cknot]\in\pi_1N$.  
Note that if $b\in \Fix_{[\cknot]}(\pi_1N)$ and $b=[\beta]$ for some loop $\beta$, then the map $\beta\vee \cknot\colon\S^1\vee\S^1\to N$ extends to a map $\beta\tm \cknot\colon \S^1\tm\S^1\to N$. Foliating this torus yields a 1-parameter family of immersed circles based at $\cknot$.

\begin{corollary}[{\cite[Thm.2.7]{K-Dax2}}]\label{cor:imcircles}
    For a connected smooth 4-manifold $N$, there is a bijection between $\pi_0\Imm(\S^1,N)$ and the set of orbits of the conjugation action on $\pi_1\S TN$, and for any $\cknot\in\Imm(\S^1,N)$ there are group extensions
	\[
    \begin{tikzcd}[row sep=tiny]
        \faktor{\pi_2N}{b\sim[\cknot]\cdot b}\rar[tail] 
        & \pi_1(\Imm(\S^1,N),\cknot)\rar[two heads] 
        & \Fix_{[\cknot]}(\pi_1N),\\
        \faktor{\pi_3\S TN}{b\sim [\deriv\cknot]\cdot b} \rar[tail] 
        & \pi_2(\Imm(\S^1,N),\cknot) \rar[two heads] 
        & \Fix_{[\cknot]}(\pi_2N).
    \end{tikzcd}
	\]
\end{corollary}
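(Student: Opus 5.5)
The plan is to specialize Lemma~\ref{lem:imm-htpy-gps} to $d=4$, $n=1,2$. The $\pi_0$ statement is exactly the first part of that lemma, so it needs no further argument. What remains is to identify the kernel and cokernel terms, using the last clause of Lemma~\ref{lem:imm-htpy-gps}: $\pi_n\S TN\cong\pi_nN$ for $n\leq d-2=2$. This isomorphism comes from the long exact sequence of the fibration $\S^3\to\S TN\to N$, since $\pi_k\S^3=0$ for $k\le 2$. So the projection $p\colon\S TN\to N$ is an isomorphism on $\pi_1$ and $\pi_2$, and it is surjective on $\pi_3$.

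For $n=1$, the extension of Lemma~\ref{lem:imm-htpy-gps} has kernel $\pi_2\S TN/(b\sim[\deriv\cknot]\cdot b)$ and cokernel $\Fix_{[\deriv\cknot]}(\pi_1\S TN)$.
\begin{itemize}
\item Under $p_*$, the class $[\deriv\cknot]\in\pi_1\S TN$ maps to $[\cknot]\in\pi_1N$, because $p\circ\deriv\cknot=\cknot$.
\item Since $p_*$ is $\pi_1$-equivariant (it is induced by a map of spaces), it carries the action of $[\deriv\cknot]$ on $\pi_2\S TN$ to the action of $[\cknot]$ on $\pi_2N$. This identifies the kernel with $\pi_2N/(b\sim[\cknot]\cdot b)$.
\item Because $p_*$ is a group isomorphism on $\pi_1$, it carries the centralizer of $[\deriv\cknot]$ onto the centralizer of $[\cknot]$, which is $\Fix_{[\cknot]}(\pi_1N)$.
\end{itemize}
This gives the first row.

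For $n=2$, the kernel term $\pi_3\S TN/(b\sim[\deriv\cknot]\cdot b)$ is left as is in the statement, so nothing needs to be done there. The cokernel $\Fix_{[\deriv\cknot]}(\pi_2\S TN)$ is identified with $\Fix_{[\cknot]}(\pi_2N)$ by the same equivariance of the isomorphism $p_*$ on $\pi_2$.

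The only point needing care is the equivariance and naturality. The identifications must be compatible with the maps in the extension, and the maps of Lemma~\ref{lem:imm-htpy-gps} come from the evaluation fibration $\Omega\S TN\to\Map(\S^1,\S TN)\to\S TN$. The remedy is to compose with the map of fibrations induced by $p$, which maps that fibration to the evaluation fibration $\Omega N\to\Map(\S^1,N)\to N$. The boundary maps of the two long exact sequences are then given by $b\mapsto b-[\deriv\cknot]\cdot b$ and $b\mapsto b-[\cknot]\cdot b$ respectively, and they correspond under $p_*$. I expect this to be routine.
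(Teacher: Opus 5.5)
Your proposal is correct and follows the route the paper intends: the corollary is Lemma~\ref{lem:imm-htpy-gps} specialized to $d=4$, $n=1,2$. The inputs are that $p_*\colon\pi_n\S TN\to\pi_nN$ is an isomorphism for $n\leq 2$ (as $\pi_k\S^3=0$ for $k\leq 2$), that $p_*[\deriv\cknot]=[\cknot]$, and that the $\pi_1$-action is natural; together these rewrite the coinvariants of $\pi_2$ and the fixed subgroups. The only slip is notational: in degree one the boundary map $\pi_1\S TN\to\pi_0\Omega\S TN$ is not additive but sends $b$ to the component of the conjugate $b^{-1}[\deriv\cknot]b$, which is why the preimage of the basepoint component is the centralizer, as you correctly conclude.
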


\begin{remark}\label{rem:atlas}
	We will use the ATLAS notation $G\cong N.H$ to denote an \textit{arbitrary} group extension $N\hra G\sra H$ (possibly split $G\cong N\rtimes H$ or trivial $G\cong N\times H$). In particular, from the corollary we have
\begin{equation}\label{eq:pi1-imm}
    \pi_1(\Imm(\S^1,N),\cknot)\cong\Big(\faktor{\pi_2N}{b\sim[\cknot]\cdot b}\Big).\Big(\Fix_{[\cknot]}(\pi_1N)\Big).
\end{equation}
\end{remark}

\begin{remark}\label{rem:relations}
	We use the notation $x\sim y$ for families of relations in which $\X$ and $y$ range over elements of the ambient group subject to conditions specified by the context. For example, in~\eqref{eq:pi1-imm}, $b\in\pi_2N$ is an arbitrary element, whereas $[\cknot]\cdot b$ is the action of $[\cknot]\in\pi_1N$ on it.
\end{remark}

\subsection{Examples}

\begin{exercise}\label{ex:Whitney-Graustein1}
    From Smale's equivalence~\eqref{eq:Smale-circles}, Lemma~\ref{lem:imm-htpy-gps}, and the equivalence $\S T\R^2\simeq \S^1$, we can derive the Whitney--Graustein Theorem:
    \[
        \pi_0\Imm(\S^1,\R^2) \cong \faktor{\pi_1(\S T\R^2)}{\sim}
        \cong \faktor{\pi_1\S^1}{\sim} \cong\Z.
    \]
    Identify this isomorphism with Whitney's total rotation number~\cite{Whitney-curves-in-plane}.
\end{exercise}

\begin{exercise}\label{ex:Whitney-Graustein2}    
    More generally, for any surface $N\neq\S^2,\R P^2$ show that
    \[
        \pi_0\Imm(\S^1,N) \cong \faktor{\pi_1(\S TN)}{\sim}
        \cong \Z.\pi_1N.
    \]
    On the other hand, $\pi_0\Imm(\S^1,\S^2) \cong \Z/2$ and $\pi_0\Imm(\S^1,\R P^2) \cong \Z/2.\Z/2$.

    \emph{Hint.} Use the long exact sequence for the bundle $\S^1\hra \S TN\to N$, whose connecting map $e|_{\pi_2N}\colon\pi_2N\to\pi_1\S^1\cong\Z$ is given by the restriction of the Euler class  (equivalently, the Euler class of the universal cover $\wt{N}$).
    Furthermore, in the resulting extension $\Z/e\hra \pi_1(\S TN)\sra\pi_1N$ the action of $\pi_1N$ on $\Z/e$ is nontrivial exactly on orientation-reversing curves. This means that if two immersed circles are homotopic, there is an obstruction in $\Z/e$ for them to be regularly homotopic.    
    This can be identified as the Reinhart--Chillingworth rotation number~\cite{Chillingworth}, or as the count of cusps in a homotopy between the curves as shown by Quine~\cite{Quine}.
\end{exercise}

\begin{exercise}\label{ex:self-centralizing}
    For any $4$-manifold $N$, show that if 
    $\Fix_{[\cknot]}(\pi_1N)=\langle[\cknot]\rangle$, then the extension~\eqref{eq:pi1-imm} is trivial:
\[
   \pi_1(\Imm(\S^1,N),\cknot)\cong\pi_1(\Map(\S^1, N),\cknot)\cong \langle [\cknot]\rangle\tm\faktor{\pi_2N}{b\sim[\cknot]\cdot b}.
   \qedhere
\]
\emph{Hint.} \cite[Thm.2.7]{K-Dax2}.
\end{exercise}

\begin{exercise}\label{ex:nullhtpic}
    For any $4$-manifold $N$, show that if $\cknot$ is nullhomotopic, then the extension~\eqref{eq:pi1-imm} splits. More precisely, we have the semi-direct product
\[
   \pi_1(\Imm(\S^1,N),\cknot)\cong \pi_2N\rtimes\pi_1N,
\]
    with the usual action of $\pi_1N$ on $\pi_2N$. 
    
    \emph{Hint.} In this case the map $\Imm(\S^1,N)\to \S TN$, which evaluates a loop at a basepointm has a section given by small loops.
\end{exercise}

\section{From immersions to embeddings}\label{sec:imm-emb}

In §\ref{subsec:wall} we review self-intersection invariants, which obstruct a single immersion from being (regularly) homotopic to an embedding. In §\ref{subsec:Dax} we explain how these invariants generalize to the Dax invariant, which detects loops of embeddings that are regularly nullhomotopic.

\subsection{Self-intersection invariants}\label{subsec:wall}

Assume now that $\dim M=2$. Then a generic map $f\colon \D^1\to M$, with boundary fixed in $\partial M$, is an immersion with a discrete set of transverse double points. 

\begin{question}
    When is $f$ homotopic to an embedding?
\end{question}

This was studied by Whitney~\cite{Whitney-n-in-2n} in the context of maps of manifolds to Euclidean spaces (see Appendix~\ref{app:Whitney}), and by Wall~\cite{Wall-surgery} who defined the self-intersection invariant $\mu(f)\in \Z[\pi_1M]$
as the sum of {signed double point loops}:
\[
    \mu(f)\coloneqq\sum_{\text{d.p. }p} \e_p[\gamma_p]\in \Z[\pi_1M].
\]
That is, for every double point $p=f(x_1)=f(x_2)$ we look at the loop given by
\begin{equation}\label{eq:dpl}
    \gamma_p\coloneqq f([-1,x_1])\cdot f([-1,x_2])^{-1}.
\end{equation}
In other words, we start at the basepoint $f(-1)\in\partial M$ and follow $f$ until we encounter $p$ for the first time, say at $p=f(x_1)$; then we switch to the other sheet $p=f(x_2)$ and return to $f(-1)$ following $f$.

Additionally, the double point $p$ carries a sign $\e_p$ coming from comparing the orientation of $T_pM$ and
\begin{equation}\label{eq:dps}
    df(T_{x_1}\D^1)\oplus df(T_{x_2}\D^1).
\end{equation}
One can show that $\mu(f)$ stays invariant under regular homotopies, that is, homotopies through immersions. If we allow all homotopies, however, then $\mu(f)$ can change only by $\pm1$, because of local cusp homotopies. Therefore, let us remove the coefficient of the trivial group element $1\in\pi_1M$ and denote the result by
\begin{equation}
    \ol{\mu}(f)\in \Z[\pi_1M\setminus1].
\end{equation}

These invariants can also be defined for any immersion $f\colon C\imra M$ such that
\[
    \dim M=2\dim C,
\]
with some modifications. We need to assume that $C$ is simply connected and we need to quotient the target $\Z[\pi_1M]$ by 
\[
    g-(-1)^{\dim C}g^{-1}
\]
for all $g\in\pi_1M$. Namely, in general we cannot order the two preimages as $x_1<x_2$, as in the case of $C=\D^1$: this ambiguous sheet choice reverses the loop $\gamma_p$ in~\eqref{eq:dpl}, and swaps the two summands in~\eqref{eq:dps} thus changing $\e_p$ to $(-1)^{\dim C}\e_p$.

\begin{theorem}[Wall~\cite{Wall-surgery}, Haefliger~\cite{Haefliger-plongements}] 
Assume $\dim M=2\dim C$.
\begin{enumerate}
\item 
    If an immersion $f\colon C\imra M$ is regularly homotopic to an embedding, then
    \[
        \mu(f)=0\quad \in\quad \faktor{\Z[\pi_1M]}{g\sim (-1)^{\dim C}g^{-1}}.
    \]
\item 
    If an immersion $f\colon C\imra M$ is homotopic to an embedding then 
    \[
        \ol{\mu}(f)=0\quad \in\quad \faktor{\Z[\pi_1M]}{1\sim0,\, g\sim (-1)^{\dim C}g^{-1}}.
    \]
\item
    Both converse statements hold if $\dim C\geq3$.
\end{enumerate}
\end{theorem}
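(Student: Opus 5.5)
The plan has three parts: show that $\mu$ is invariant under regular homotopy, show that a cusp changes only the trivial coefficient, and for the converse remove double points by the Whitney trick. Throughout we work with generic immersions $f\colon C\imra M$ with $\dim M=2\dim C$. Their self-intersections are transverse double points. The first step is to check that $\mu(f)$ is well defined in $\faktor{\Z[\pi_1M]}{g\sim(-1)^{\dim C}g^{-1}}$. Changing the whisker (the path from the basepoint to a preimage) changes $\gamma_p$ only by paths in $C$, and these are trivial in $\pi_1M$ because $C$ is simply connected. Swapping the sheets sends $\e_p[\gamma_p]$ to $(-1)^{\dim C}\e_p[\gamma_p]^{-1}$, as explained before the statement.

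For part (1), a generic regular homotopy $f_t$ is a generic map $C\tm I\to M\tm I$ of codimension $\dim C$. Its double point set is a $1$-manifold whose boundary lies in $t=0,1$. Apart from isotopies, the only codimension-one events are births and deaths of pairs of double points, i.e.\ Whitney moves and their inverses. At such an event the two double points are joined by an arc of the double curve, so they carry the same group element and opposite signs. Hence $\mu(f_0)=\mu(f_1)$. An embedding has $\mu=0$, which proves (1).

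For part (2), a generic homotopy is regular except at finitely many times. At each of these, a local cusp (a ``twist'' or ``kink'' move) creates or destroys a single double point. That double point has a trivial loop, since both preimages are close together in a small ball, and sign $\pm1$. So only the coefficient of $1$ changes, and $\ol{\mu}$ is a homotopy invariant; it vanishes on embeddings. One also has to check that when $\dim C$ is odd the class of $1$ is $2$-torsion in the quotient, so that the relation $1\sim0$ is consistent.

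For the converse (3), with $\dim C\geq3$, suppose first that $\mu(f)=0$. The formal sum then cancels in pairs $p,q$, meaning $\e_p=-\e_q$ and $[\gamma_p]=[\gamma_q]^{\pm1}$; the sheet-choice relation lets us arrange $[\gamma_p]=[\gamma_q]$. Choose a Whitney circle that goes from $p$ to $q$ on one sheet and back on the other. Equality of the group elements means this circle is nullhomotopic, so it bounds a map of a disc $W$. Since $\dim M\geq6$, we have $2\cdot2<\dim M$, so $W$ may be taken embedded with interior disjoint from $f(C)$, because $2+\dim C<\dim M$. The sign condition guarantees that the normal framing needed for the Whitney move exists; checking this framing is the main technical step, using $\dim C\geq3$. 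The Whitney move is a regular homotopy that removes $p$ and $q$. Iterating yields an embedding. Note that if $\dim C$ is even, a term $g=g^{-1}$ with coefficient $\pm1$ would be a problem, but the quotient accounts for it.

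If instead $\ol{\mu}(f)=0$, first perform local cusp homotopies. Each one changes the coefficient of $1$ by $\pm1$, so we can make the full $\mu$ vanish and then apply the regular case. I expect the main obstacle to be the framing and sign bookkeeping, in particular the identification of the pairing relations in the quotient with the cancelling pairs.
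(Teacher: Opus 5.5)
Your proposal is correct and follows the route the paper indicates. The paper gives only a sketch with the same three ingredients you use: invariance of $\mu$ under regular homotopy via the double-point curve of the track, cusp homotopies changing only the coefficient of $1$ by $\pm1$, and the Whitney trick for the converses when $\dim C\geq 3$.

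One bookkeeping correction concerns the parity. When $\dim C$ is odd, a pair with $[\gamma_q]=[\gamma_p]^{-1}$ cancels in the quotient exactly when $\epsilon_p=\epsilon_q$, not when $\epsilon_p=-\epsilon_q$. Likewise, the delicate case is $\dim C$ odd with $g=g^{-1}$ (where $2g\sim 0$), not $\dim C$ even, where the relation is vacuous on such $g$. In every case, re-choosing sheets via $\epsilon g\mapsto(-1)^{\dim C}\epsilon g^{-1}$ yields pairs with equal group elements and opposite signs, which is exactly the input the Whitney move needs.
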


The converse statements are proven using the Whitney trick. In the cases $(\dim C,\dim M)=(1,2)$ and $(2,4)$ we cannot find embedded Whitney disks in general. For interesting work on these cases see for example \cite{Birman-Series} and \cite{Conant-Schneiderman-Teichner}.

Haefliger's work slightly predates Wall's, and is in fact more general: his main result, which we state as Theorem~\ref{thm:Haef} in Appendix~\ref{app:Haefliger}, classifies all embeddings homotopic to the given $f$.

\subsection{The Dax invariant}\label{subsec:Dax}

The work of Haefliger and Wall was extended independently by Dax \cite{Dax} and by Hatcher and Quinn~\cite{Hatcher-Quinn}. They extend the previous $\pi_0$ statement to a computation of homotopy groups of embedding spaces in a range, under the dimension assumption:
\[
    \dim C< \frac{2}{3}\dim M.
\]
For the conceptual explanation of this bound and for general dimensions we refer to Appendix~\ref{app:Dax}, whereas here we concentrate on
\[
    (\dim C,\dim M)=(1,4).
\]
Let us fix smooth $4$-manifolds $\X$ and $N$, together with basepoint embeddings $\unkn\colon\D^1\hra\X$ and $\cknot\colon\S^1\hra N$. We claim that the inclusions of embeddings into immersions induce surjections
    \begin{equation}\label{eq:emb-into-imm}
        \begin{tikzcd}[row sep=tiny]
        \pi_1(\Emb(\D^1,\X), \unkn)\rar[two heads]{\iota_{\X}} 
        & \pi_1(\Imm(\D^1,\X), \unkn),\\
        \pi_1(\Emb(\S^1,N), \cknot)\rar[two heads]{\iota_N} 
        & \pi_1(\Imm(\S^1,N), \cknot).
        \end{tikzcd}
    \end{equation}
Indeed, given an immersion $f\colon \D^1\imra X$, at each double point we can find a normal direction in which we can resolve it. Similarly, for a loop of immersions $f_t\colon \D^1\imra X$, $f_0=f_1$, the trace
\[
    \wt{f}\colon \D^1\tm[0,1]\ra X\tm[0,1]
\]
is an immersion of a 2-manifold into a 5-manifold. At each self-intersection point of $\wt{f}$ there is one direction normal to both sheets; this is computed as $2(5-2)-5=1$. After an isotopy, we can make this vector normal to the time axis. Thus, pushing off one sheet in this direction removes the self-intersection while keeping it the trace of a 1-parameter family of immersions $\D^1\imra X$. Therefore, $(f_t)_{t\in[0,1]}$ can be perturbed to a family $(\wh{f}_t)_{t\in[0,1]}$ of embeddings. The same holds for $\S^1\imra N$.

Using Corollaries~\ref{cor:imarcs} and~\ref{cor:imcircles} the maps~\eqref{eq:emb-into-imm} transform into
    \begin{equation}\label{eq:pi1p}
        \begin{tikzcd}[row sep=tiny]
        \pi_1(\Emb(\D^1,\X), \unkn)\rar[two heads]{\pi_1 p_{\X,\unkn}} 
        & \pi_2\X,\\
        \pi_1(\Emb(\S^1,N), \cknot)\rar[two heads]{\pi_1 p_{N,\cknot}} 
        & \Big(\faktor{\pi_2N}{b\sim[\cknot]\cdot b}\Big).\Big(\Fix_{[\cknot]}(\pi_1N)\Big).
        \end{tikzcd}
    \end{equation}

The question now becomes the following.
\begin{question}
	How many choices of lifts $(\wh{f}_t)_{t\in[0,1]}$ are there? That is, what are the kernels of $\pi_1p_{\X,\unkn}$ and $\pi_1 p_{N,\cknot}$?
\end{question}
This is answered by the following special case of Dax's work, first studied by Gabai~\cite{Gabai-disks} in the context of the failure of light bulb tricks for disks with duals.

Observe that for a class $f$ in the kernel of these maps, represented by a loop of embeddings, we can find a homotopy through immersions to the constant loop based at $e=\unkn$ or $e=\cknot$ respectively, that is, a regular nullhomotopy of $f$. That is, we have a map
\begin{equation}\label{eq:homotopy}
    F\colon[0,1]\to\Omega\Immp(C,M),\quad \tau\mapsto F_{-,\tau},\quad F_{-,0}=f,\quad F_{-,1}=\const_e,
\end{equation}
where either
\[
    (e,C,M)=(\unkn,\D^1,X)
    \quad \text{or}\quad 
    (e,C,M)=(\cknot,\S^1,N).
\]
Equivalently, we can describe $F\colon [0,1]^2\to \Imm(C,M)$ by a square 
\[
\begin{tikzcd}
      e\rar[equals]\dar[equals]\ar[phantom]{dr}{F} 
      & e\dar[equals] \\
      e\ar[r,"f" description]
      & e
\end{tikzcd}
\]
The Dax invariant $\Dax(F)$ counts the signed double points of immersions in $F$, using the following Algorithm. 

\begin{alg}\label{alg}\hfill
\begin{enumerate}
    \item Pick $F$ as in \eqref{eq:homotopy} such that $F_{s,\tau}\colon C\imra M$ fails to be embedded only at finitely many times $\tau=\tau_j\in[0,1]$ and $s=s_j\in[0,1]$. Note that $F_{0,\tau}=F_{1,\tau}=e$ for all $\tau\in[0,1]$.
    \item For every $j$ make an ordered list
    \[
    *<x^j_1<\dots<x^j_{k_j}
    \]
    of double points $x^j_i\in M$ of the nonembedded 1-manifold $F_{s_j,\tau_j}(C)$, using the positive orientation of $C$ and the basepoint $-1\in C$ for the ordering. Note that at each $x_i^j$ there are two sheets (local pieces of $F_{s_j,\tau_j}$ passing through $x_i^j$), and we can order them as well.
    \item Modify $F$ so that for each $x_i^j$:
    \begin{enumerate}
        \item only one of the sheets moves with $\tau$ or $s$, whereas the other stands still in both of these directions, 
        \item there is a chart $\R\times\R^3\subset M$ around $x_i^j$, so that the $s$-derivative of the moving sheet is parallel to the positive $\R\times\{\Vec{0}\}$-direction (that is, moves from past into future with $s$).
    \end{enumerate}
    \item 
For each $x_i^j$ compute the double point \textbf{loop}
    \[
        h_i^j= [\gamma_1\cdot\gamma_2]\in\pi_1M
    \]
    as follows: $\gamma_1$ starts at $e(-1)=F_{s_j,\tau_j}(-1)$ and goes on $F_{s_j,\tau_j}$ until it reaches $x_i^j$ for the \emph{first time}, and $\gamma_2$ starts at $x_i^j$ and follows the \emph{other sheet} with the \emph{reversed orientation} until it reaches $e(-1)$.
    \item 
For each $x_i^j$ compute the double point \textbf{sign} 
    \[
    \epsilon(x_i^j)=\epsilon_1\cdot\epsilon_2\in\{\pm1\}
    \]
    as follows: 
    $\epsilon_1=1$ (resp.\ $\epsilon_1=-1$) if the first (resp.\ second) sheet is moving, and 
    $\epsilon_2$ is positive if and only if the following basis of $\{0\}\times\R^3$ is positively oriented: the first vector is the derivative in the $\tau$-direction of the moving sheet, the second vector is tangent to the moving sheet, the third vector is tangent to the non-moving sheet.
    \item Finally, define
    \[
    \Dax(F)\coloneqq\sum_j\sum_i\epsilon(x_i^j)h_i^j.\qedhere
    \]
\end{enumerate}
\end{alg}
\begin{figure}[!htbp]
    \centering
    \includegraphics[width=\linewidth]{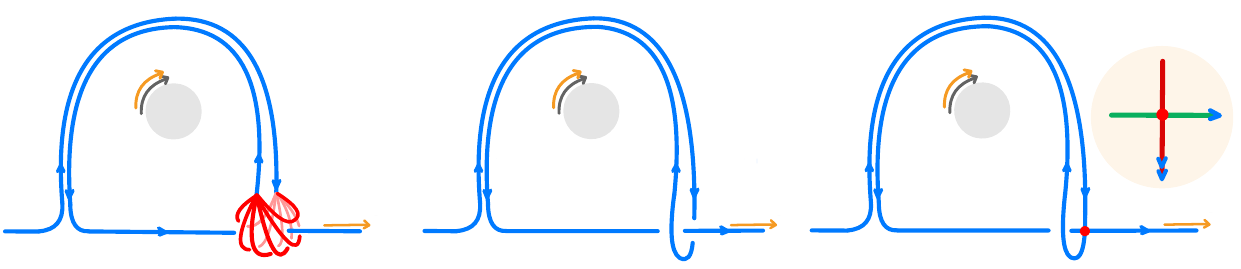}
    \caption{
        (i)~The family $\prealmap(g)$. 
        (ii)~The member of this family completely contained in the present $\{0\}\times\R^3$. 
        (iii)~The only immersed member of the family $\prealmap(g)_{s,\tau}$, and the local picture at its double point.
    }
    \label{fig:realmap-Dax}
\end{figure}

The following exercise will be of key importance, see Theorem~\ref{thm:Dax-dax}.

\begin{exercise}\label{exerc:realmap}
    For $g\in\pi_1M$ we define a family $\prealmap(g)=(\prealmap(g)_t\colon C\hra M)_{t\in[0,1]}$ of arcs or circles, based at $\prealmap(g)_0=\prealmap(g)_1=e$. It is depicted in Figure~\ref{fig:realmap-Dax}(i) and described as follows.
\begin{enumerate}
        \item 
    During the time $t\in[0,1/4]$ one piece of $e$ is dragged around a loop representing $g$, until it reaches a neighborhood of another point $p\in e(C)$.
        \item 
    Then for $t\in[1/4,3/4]$ the tip of the arc gets twirled around the meridian sphere to $e$ at $p$.
        \item 
    For $t\in[3/4,1]$ the arc gets dragged back to its initial position.
\end{enumerate}
    If we think of the meridian sphere to $e$ at $p$ as the union of a circle in the present slice together with one disk in the past and one in the future, then the embedding $\prealmap(g)_{1/2}$ is contained in the present, as in Figure~\ref{fig:realmap-Dax}(ii).

    Use the normal ball to $e$ at $p$ to construct a regular nullhomotopy $F$, and compute the Dax invariant $\Dax(F)$. \emph{Hint.} Figure~\ref{fig:realmap-Dax}(iii).
\end{exercise}

\section{Computing the fundamental group of embeddings}\label{sec:fund-gp}

Next, we must address the problem of indeterminacy of the Dax invariant: if $F'$ is another regular nullhomotopy for $f$, do we have $\Dax(F')=\Dax(F)$? The answer is negative in general, because concatenating these maps along $f$ gives a regular nullhomotopy $FF'$ of $\const_e$, which can in general have double points.

We view the nullhomotopy $FF'$ as a map
\[
    [0,1]\tm[0,1]\ra\Immp(C,M)
\]
that is constantly equal to $e$ on the boundary:
\begin{equation}\label{eq:commut-schema}
\begin{tikzcd}
      e\rar[equals]\dar[equals]\ar[phantom]{dr}{F} 
      & e\dar[equals] \\
      e\ar[r,"f" description]\dar[equals]\ar[phantom]{dr}[rotate=180]{F'} 
      & e\dar[equals]\\
      e\rar[equals]
      & e
\end{tikzcd}
\end{equation}
Thus this represents a class $[FF']\in\pi_2(\Immp(C,M),e)$, and in Corollaries~\ref{cor:imarcs} and~\ref{cor:imcircles} we computed
\begin{align*}
    \pi_2(\Immp(\D^1,\X),\unkn) &\cong \Z\oplus\pi_3\X,\\
    \pi_2(\Imm(\S^1,N),\cknot) &\cong \Big(\faktor{\pi_3\S TN}{b\sim[\deriv\cknot]\cdot b}\Big).\Fix_{[\cknot]}(\pi_2N).
\end{align*}
    The class $1\in\Z$ corresponds to the positive cusp nullhomotopy $F_1$ of $\unkn\colon\D^1\hra\X$, and one can compute $\Dax(F_1)=1$; see \cite{KT-4dLBT}. This is the reason why we discard the group element $1$ from the group ring $\Z[\pi_1\X]$, and for the case of arcs need only compute the indeterminacies coming from $\pi_3\X$, as follows.

\subsection{The Dax homomorphism}\label{subsec:dax-homomorphism}

Let us first consider the case of arcs, that is, $(e,C,M)=(\unkn,\D^1,X)$. Given $a\in\pi_3X$ we can pick $A_\unkn\colon\S^2\to\Imm_\partial(\D^1,X)$ for which $\pi_2p_{\X,\unkn}(A_\unkn)=a$ and view it as a self-nullhomotopy of the loop $\const_\unkn$. See Figure~\ref{fig:u-whiskered}. We can compute $\dax_\unkn(a)\coloneqq\Dax(A_\unkn)$, and thus define the homomorphism
\begin{equation}\label{eq:dax-u-def}
    \dax_\unkn\colon\pi_3X\ra\Z[\pi_1X\setminus1].
\end{equation}
The image of this homomorphism is precisely the indeterminacy subgroup by which we must quotient.
\begin{figure}[!htbp]
    \centering
    \includegraphics[width=0.42\linewidth]{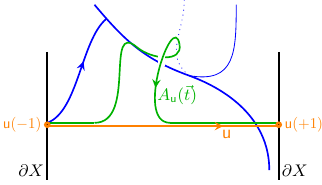}
    \captionof{figure}{A representative of $a\in\pi_3\X$ is colored blue, and the immersed arc $A_\unkn(\vec{t})$ for a value $\vec{t}\in\S^2$ is green.}
    \label{fig:u-whiskered}
\end{figure}

The following is an instructive computation of the Dax homomorphism; something similar appears in \cite[Lem.3.22,Fig.3.23]{KT-4dLBT} and \cite[Cor.1.4]{K-Dax}. See Corollary~\ref{cor:X-from-N} for an application.
\begin{lemma}\label{lem:Phi}
    Fix a 4-manifold $N$, an embedded circle $\cknot\colon\S^1\hra N$, and a small 4-ball $\D^4$ around the basepoint $\cknot(-1)$. Let $\Phi\colon\S^3\hra N\setminus\D^4$ be the boundary of the removed 4-ball, and let $\unkn\colon\D^1\hra N\setminus\D^4$ be the restriction of $\cknot$. Then for any $g\in\pi_1N$ we have
    \[
        \dax_{\unkn}(g[\Phi]) = g + g^{-1}[\cknot]^{-1}.
    \]
\end{lemma}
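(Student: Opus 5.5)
We will compute $\dax_\unkn(g[\Phi])$ directly from the definition: build an explicit $2$-parameter family $A_\unkn\colon\S^2\to\Immp(\D^1,X)$, with $X\coloneqq N\setminus\D^4$, satisfying $\pi_2p_{X,\unkn}(A_\unkn)=g[\Phi]$, and count its double points by Algorithm~\ref{alg}. The basic model is the element $[\Phi]\in\pi_3X$ itself, with $g$ inserted by a whisker. Fix an arc $w$ from a point of $\unkn$ near $\unkn(-1)$ to a point near $\partial\D^4$ representing $g$. For $\vec t\in\S^2$, let $A_\unkn(\vec t)$ be the arc obtained by pushing a small finger of $\unkn$ along $w$, and then letting the tip sweep over the $3$-sphere $\Phi$. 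Concretely, the tips over all $\vec t$ foliate $\Phi$ minus a point by arcs (the "green" arcs of Figure~\ref{fig:u-whiskered}), based at the whisker end. Under $p_{X,\unkn}$ this family is the $2$-sphere of loops whose adjoint is $g[\Phi]$, as required.

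The double points of $A_\unkn(\vec t)$ come from the tip meeting other parts of $\unkn$. The finger and whisker are generic and disjoint from $\unkn$ in the $1$-parameter sense, so intersections can only occur where the sweep of $\Phi$ crosses $\unkn$. The key observation is that the $3$-sphere $\Phi=\partial\D^4$ intersects the circle $\cknot$ transversally in exactly two points. These are $\cknot(-1\pm\delta)$, namely the endpoints of $\unkn$, where $\unkn$ exits and re-enters the removed ball. Since $\unkn$ is neat, these points lie on $\partial X$, and we push $\Phi$ slightly into the interior of $X$ so that it meets $\unkn$ near its two ends. Then $\unkn$ meets the $3$-dimensional sweep transversally in two points, and each yields exactly one value $\vec t_j$ at which $A_\unkn(\vec t_j)$ has a single double point. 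So $\Dax(A_\unkn)$ has exactly two terms $\e_1h_1+\e_2h_2$.

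Next we compute the double point loops via step (4) of Algorithm~\ref{alg}. Consider first the intersection near the initial end $\unkn(-1)$. The loop $\gamma_1\gamma_2$ runs from $\unkn(-1)$ along the finger and the whisker $w$ to $\Phi$, around to the intersection point, and back along the short initial segment of $\unkn$. This gives $h_1=g$. For the intersection near the terminal end, the path returns along all of $\unkn$, which is $\cknot$ minus the arc inside $\D^4$. Since the path is traversed in reverse, we get $h_2=g^{-1}[\cknot]^{-1}$ up to the orientation conventions of step (4). One must check carefully which sheet is reached first: for the terminal point the first sheet is the fixed $\unkn$, not the finger. This sheet-order flip is exactly what produces the inversion $g^{-1}$.

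The main obstacle is the signs. We must show $\e_1=\e_2=+1$, rather than opposite signs, which would give the expected algebraic cancellation pattern. The two intersections of $\cknot$ with $\partial\D^4$ have opposite local intersection signs, since $\cknot$ enters and exits the ball. However, the sheet order is also swapped between the two cases, and step (5) multiplies by $\epsilon_1=\pm1$ according to which sheet moves. We first normalize via step (3), so that the fixed sheet is $\unkn$ and the moving sheet is the tip. Then we compute $\epsilon_2$ in a local chart where $\Phi$ is the boundary of a ball and $\unkn$ is a radial line. The two sign changes should cancel, giving $\dax_\unkn(g[\Phi])=g+g^{-1}[\cknot]^{-1}$. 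As a sanity check we compare with Exercise~\ref{exerc:realmap}, where the normal-ball nullhomotopy of $\prealmap(g)$ has a single double point of sign $+1$ and loop $g$, using the same convention as here.
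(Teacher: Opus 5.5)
Your strategy is the paper's: push $\Phi$ to $\Phi'$ meeting $\unkn$ in $p_\pm$, drag a finger of $\unkn$ around $g$, sweep its tip over $\Phi'$, and read off two double points. But the step that carries the whole computation, namely which sheet is reached first at each double point, is wrong.

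Along the arc, the tip is reached before the stationary strand through $p_\pm$ exactly when the finger base $q$ precedes $p_\pm$ on $\unkn$. You put $q$ near $\unkn(-1)$, so $q$ precedes $p_+$. Hence at the terminal double point the finger is the \emph{first} sheet and $\unkn$ the second, which is the opposite of what you assert. This is also what your own derivation of $h_2$ uses, with $\gamma_2$ running backwards along all of $\unkn$ to produce $[\cknot]^{-1}$.

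At $p_-$, your description of $h_1$ (out along the finger, back along the initial segment) takes the finger first. That forces $q$ into the collar between $\Phi$ and $\Phi'$. But then the whisker must pass through $\Phi'$ to leave the collar, so the tip also sweeps through the finger's own shaft. This creates further double points that your count of ``exactly two terms'' ignores.

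The clean configuration, as in the paper, has $q$ between $p_-$ and $p_+$ and the whisker disjoint from the collar. At $x_1$ (near $p_-$) the stationary sheet is first, so $\epsilon_1=-1$; $\gamma_1$ is the short initial segment and $\gamma_2$ runs back along tip and whisker. At $x_2$ (near $p_+$) the finger is first, so $\epsilon_1=+1$; $\gamma_1$ runs out along whisker and tip and $\gamma_2$ runs back along all of $\unkn$. The whisker thus enters the two loops with opposite orientations, and that is what produces $g$ versus $g^{-1}$.

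Your reason why the two signs agree is the paper's: $\epsilon_1$ flips with the sheet order, and $\epsilon_2$ flips because $\unkn$ meets $\Phi'$ with opposite intersection signs at $p_-$ and $p_+$. However, this only shows $\epsilon(x_1)=\epsilon(x_2)$. The lemma needs the common value to be $+1$, which requires actually carrying out step (5) of the Algorithm. In the paper, the basis at $x_1$ is (out of the page, up, right), which is negatively oriented, so $\epsilon(x_1)=(-1)(-1)=+1$. At $x_2$ it is (out of the page, up, left), so $\epsilon(x_2)=(+1)(+1)=+1$. With your sheet orders and the same local data, both signs would come out $-1$, giving $-(g+g^{-1}[\cknot]^{-1})$. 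So the ordering error is not cosmetic.
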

\begin{figure}[!htbp]
    \centering
    \includegraphics[width=0.8\linewidth]{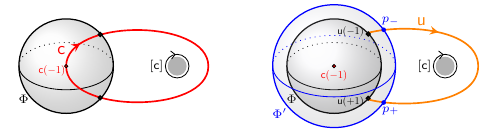}
    \caption{
    (i) Removing a $4$-ball from a manifold $N$ leaves a boundary component $\Phi$ and turns an embedded circle $\cknot$ into a neatly embedded arc $\unkn$. 
    (ii) The push-off $\Phi'$ is a 3-sphere that intersects $\unkn$ in points $p_-$ and $p_+$.}
    \label{fig:Phi}
\end{figure}
\begin{proof}
    The setup is depicted in Figure~\ref{fig:Phi}. We push $\Phi$ into the interior of $N$ to create an embedded sphere $\Phi'\colon\S^3\hra N\setminus\D^4$, that intersects $\unkn$ in two points $p_{\pm}$ of necessarily opposite signs.

    We now take a piece of $\unkn$ and bring it close to $\Phi'$, by first taking it around a loop representing $g\in\pi_1N$. We then let this piece move around $\Phi'$ as a 2-parameter family of arcs $A_\unkn(\vec{t})$ with fixed endpoints, $\vec{t}\in\S^2$. In Figure~\ref{fig:g+g-inv} on the left we see the basepoint arc $\unkn=A_\unkn(*)$, whereas on the right is the only immersed arc $A_\unkn(\vec{t}_0)$ in the family. We parametrize the family so that in the $s$-direction (for the loop) the green arc travels positively through time, whereas in the $\tau$-direction (for the homotopy) it gets pushed towards the reader, out of the page, creating double points.
\begin{figure}[!htbp]
    \centering
    \includegraphics[width=0.9\linewidth]{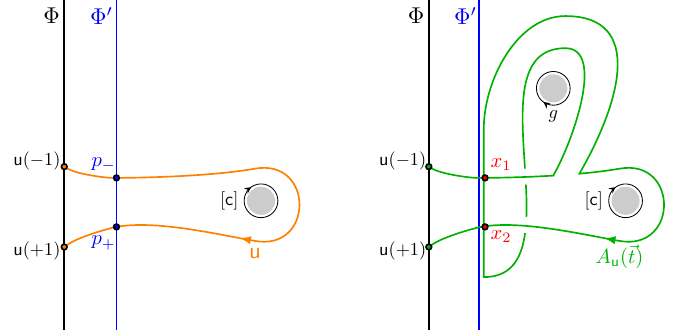}
    \caption{
    Foliating $g\Phi'$ by $\unkn$-based arcs yields a 2-parameter family $A_\unkn(\vec{t})$.
    (i)~The basepoint arc $A_\unkn(*)=\unkn$.
    (ii)~The immersed arc $A_\unkn(\vec{t}_0)$ with two double points.}
    \label{fig:g+g-inv}
\end{figure}

    As computed in Figure~\ref{fig:compute}, the double point loop is $g$ at $x_1$ and $g^{-1}[\cknot]^{-1}$ at $x_2$. 
\begin{figure}[!htbp]
    \centering
    \includegraphics[width=0.7\linewidth]{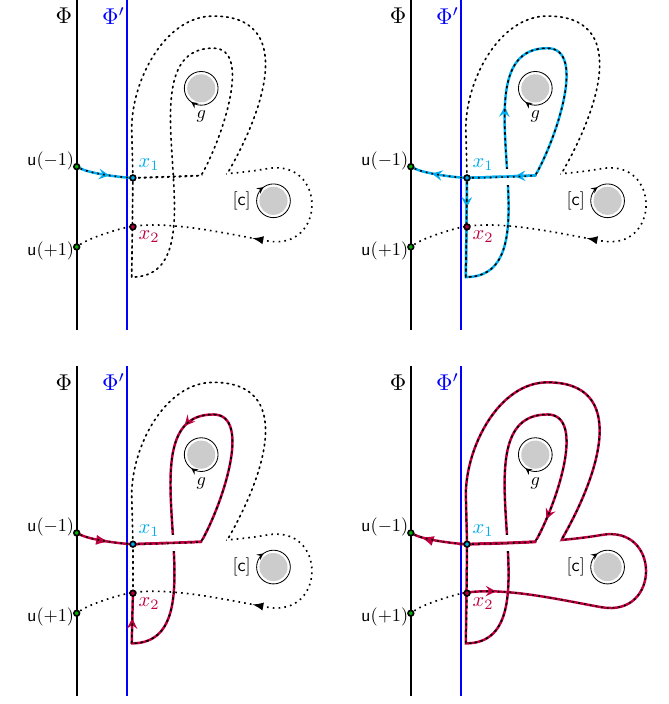}
    \caption{The computation of group elements. In the first row we see in blue respectively the curves $\gamma_1$ and $\gamma_2$ for $x_1$. In the second row we see in red respectively the curves $\gamma_1$ and $\gamma_2$ for $x_2$.}
    \label{fig:compute}
\end{figure}
    
    Using the Algorithm we see that both signs are positive, as follows. For $x_1$ the first sheet stands still, and the second sheet moves, so $\e_1=-1$, whereas the basis from the Step (v) of the Algorithm is ``(out of the page, up, right)'', which is negatively oriented, so $\e_2=-1$. On the other hand, at $x_2$ we have $\e_1=+1$ as the first sheet moves, and $\e_2=+1$ because the basis is ``(out of the page, up, left)''.
\end{proof}

\subsection{Relating Dax and Wall}\label{subsec:Dax-Wall}

As mentioned in §\ref{subsec:wall} and discussed in more detail in Appendix~\ref{app:WHD}, the results of Dax generalize those of Wall and Haefliger. The following theorem of the author and Peter Teichner is another comparison of that kind, but particular to the setting of 1-manifolds in 4-manifolds. We compare the Dax homomorphism $\dax_\unkn$ from §\ref{subsec:dax-homomorphism} --- in the special case when $\unkn$ is homotopic into the boundary --- to certain variants of the Wall invariants $\mu$ from §\ref{subsec:wall}.

Namely, in \cite[Cor.4.13]{KT-4dLBT} we show that if $\unkn$ is homotopic into the boundary, then the image of $\dax_\unkn$ is contained in the subgroup
\[
    \Z[\pi_1\X]^\sigma<\Z[\pi_1\X]
\]
of invariants under the involution $g\mapsto g^{-1}$.
We then consider the Wall invariants
\[
    \mu_3\colon H_3\wt\X \ra \faktor{\Z[\pi_1\X]^\sigma}{1\sim0,\, g\sim-g^{-1}},
\]
which counts self-intersections of immersed 3-manifolds in the 6-manifold $\X\tm\R^2$, and 
\[
    \mu_2\colon\pi_2\X\ra \faktor{\Z[\pi_1\X]}{1\sim0,\, g\sim g^{-1}},
\]
which counts self-intersections of immersions of 2-spheres into the 4-manifold $\X$.

\begin{theorem}[{K.--Teichner~\cite[Thm.3.17]{KT-4dLBT}}]\label{thm:Hopf}
  Fix a 4-manifold $\X$ and an arc $\unkn\colon\D^1\hra\X$ that is homotopic rel endpoints into the boundary $\partial\X$. Then there is a commutative diagram of 
  \[\begin{tikzcd}
       \Gamma(\pi_2\X)
            \ar[tail]{rr}{\Gamma(-\circ H)}
            \dar{\Gamma(\mu_2)}
        && \pi_3\X 
            \ar[two heads]{r}{\mathrm{Hur}} 
            \dar{\dax_\unkn} 
        & H_3\wt\X 
            \dar{\mu_3} 
        \\
        \faktor{\Z[\pi_1\X]}{1\sim0,\, g\sim g^{-1}}
            \ar[tail]{rr}{g\ \mapsto\ g+g^{-1}} 
        && \Z[\pi_1\X\setminus1]^\sigma 
            \rar[two heads]
        & \faktor{\Z[\pi_1\X]^\sigma}{1\sim0,\, g\sim-g^{-1}}
    \end{tikzcd}
    \]
    In particular, $\dax_\unkn(a\circ H) = \mu_2(a) +\overline{\mu_2(a)}$ for all $a\in\pi_2X$. 
\end{theorem}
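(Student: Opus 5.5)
The plan is to treat the diagram as two separate claims, the left square and the right square, and to compute $\dax_\unkn$ on explicit geometric representatives using the Algorithm, in the spirit of the proof of Lemma~\ref{lem:Phi}.

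\textbf{The top row.} The top row is the Whitehead exact sequence of the universal cover $\wt\X$:
\[
  \Gamma(\pi_2\wt\X)\to\pi_3\wt\X\to H_3\wt\X\to0,
\]
using $\pi_k\wt\X\cong\pi_k\X$ for $k\ge2$ and $H_2\wt\X\cong\pi_2\X$. Here the first map is $a\mapsto a\circ H$, with $H$ the Hopf map. Injectivity on the left is part of the statement, and I would take it from the $4$-dimensional structure or simply cite it. The bottom row is elementary: $g\mapsto g+g^{-1}$ is well defined and injective on the stated quotient, and its image is exactly the kernel of the projection.

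\textbf{Step 1: $\dax_\unkn$ lands in the symmetric part, and the right square commutes.} Since $\unkn$ is homotopic into $\partial\X$, the arc can be isotoped so that it lies in a collar, away from the core of $\X$. Given $a\in\pi_3\X$, I would represent it by a map $\S^3\to\X$ and build $A_\unkn$ by foliating this $3$-sphere by arcs whiskered to $\unkn$, as in Figure~\ref{fig:u-whiskered}. After a generic perturbation, the double points of the resulting family of arcs should correspond to self-intersections of the trace, viewed as an immersed $3$-manifold in $\X\tm[0,1]^2$, that is, in a $6$-manifold. Each self-intersection of the underlying immersed $3$-sphere then contributes two double points, with group elements $g$ and $g^{-1}$ and equal signs, by a local computation analogous to Figure~\ref{fig:compute}. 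This gives symmetry. It also shows that, modulo $g\sim-g^{-1}$ and $1\sim0$, the count $\dax_\unkn(a)$ reduces to Wall's $\mu_3$ of the Hurewicz image, since $\mu_3$ already counts each such pair once with the sheet ambiguity quotiented out.

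\textbf{Step 2: the left square, $\dax_\unkn(a\circ H)=\mu_2(a)+\overline{\mu_2(a)}$.} I would represent $a$ by a generic immersion $A\colon\S^2\imra\X$ and represent $a\circ H$ by composing with the Hopf fibration. Its preimage circles are pairwise linked once, and the double points of $A$ pull back to pairs of Hopf circles. I would then run the Algorithm on the family of arcs obtained by foliating this composite, using a whisker to $\unkn$. The arcs meet only over the double points of $A$: each double point $p$ of $A$, with group element $g_p$ and sign $\e_p$, should produce exactly two double points of immersed arcs in the family. Because the Hopf fibres are linked, one of these carries $g_p$ and the other $g_p^{-1}$, with common sign $\e_p$. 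The remaining contribution comes from the linking of fibres over a single point; it is local, so its group element is trivial, and it is discarded in $\Z[\pi_1\X\setminus1]$. Summing over double points gives $\mu_2(a)+\overline{\mu_2(a)}$. Finally, $\Gamma$-functoriality, which requires checking that $\dax_\unkn$ is quadratic-compatible, extends this from elements $a\circ H$ to all of $\Gamma(\pi_2\X)$.

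\textbf{Main obstacle.} The main obstacle is the sign and group-element bookkeeping in Step 2: showing that the two double points over $p$ really carry $g_p$ and $g_p^{-1}$ with the same sign, rather than cancelling. I would first try to reduce to a local model, namely a single transverse double point of two sheets of $\S^2$ in $\R^4$, joined by a whisker representing $g$. In that model I would compute the Algorithm explicitly, mimicking Figure~\ref{fig:g+g-inv}, and rely on the assumption that $\unkn$ is boundary-parallel to ensure that no extra contributions from $\unkn$ itself occur.
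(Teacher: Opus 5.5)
\textbf{The gap is in Step~1.} The doubling you describe does not happen, and if it did it would contradict the conclusion you draw from it.

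Write the finger loops as \(\ell_{\vec t}=w\lambda_{\vec t}w^{-1}\), with whisker \(w\) and leaves \(\lambda_{\vec t}\) of the foliated sphere. Their trace \((r,\vec t)\mapsto(\ell_{\vec t}(r),\vec t)\), whisker strands included, is itself (after capping off) the immersed \(3\)-sphere in the \(6\)-manifold that represents \(\mathrm{Hur}(a)\). Each of its self-intersections is exactly one double point of the Algorithm, and ordering the sheets along \(\D^1\) selects a single element \(h\), not the pair \(h^{\pm1}\). That is why \(\dax_\unkn\) lifts \(\mu_3\).

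Suppose instead every self-intersection contributed \(\epsilon(g+g^{-1})\). Then \(\dax_\unkn(a)\) would lie in the image of \(g\mapsto g+g^{-1}\), so it would vanish in the cokernel, which is the target of \(\mu_3\). There \(g+g^{-1}\sim0\) while \(\mu_3\) records \(g\), so your reduction to \(\mu_3\) fails.

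Nor is \(\mu_3\circ\mathrm{Hur}\) zero in general. Take \(\X=\R P^3\times[0,1]\), with \(\pi_1\X=\langle T\rangle\cong\Z/2\). The double cover \(\S^3\to\R P^3\) generates \(\pi_3\X\cong\Z\); perturb it to \(x\mapsto([x],\psi(x))\) in \(\X\times\R^2\), with \(\psi\colon\S^3\to(0,1)\times\R^2\) generic. Its double points are the antipodal pairs with \(\psi(x)=\psi(-x)\). There is an odd number of them, by Borsuk--Ulam for the odd map \(x\mapsto\psi(x)-\psi(-x)\), and each has group element \(T\). So \(\mu_3=T\neq0\), and the theorem forces an odd \(T\)-coefficient in \(\dax_\unkn\) of the generator. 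Also, in Lemma~\ref{lem:Phi} the pair \(g,\ g^{-1}[\cknot]^{-1}\) comes from the two intersections \(p_\pm\) of the embedded sphere \(\Phi'\) with \(\unkn\), not from a self-intersection.

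\textbf{Consequence: the symmetry is unproved.} You have no argument that \(\dax_\unkn\) lands in \(\Z[\pi_1\X\setminus1]^\sigma\); the survey attributes this to \cite[Cor.~4.13]{KT-4dLBT}. It is a global symmetry, not a pairing of double points. Your setup is the right place to use the hypothesis, and one can argue as follows.
\begin{itemize}
\item Since \(\unkn\) is homotopic, hence isotopic, into a collar, you can push \(a\) and the whisker off \(\unkn\). Then every double point is a self-intersection of some \(\ell_{\vec t}\).
\item Reverse all the loops. Inversion on \(\Omega\X\) acts by \(-1\) on \(\pi_2\), so the new family computes \(\dax_\unkn(-a)=-\dax_\unkn(a)\).
\item In the Algorithm each contribution \(\epsilon h\) becomes \(-\epsilon h^{-1}\). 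The sheet order swaps, so \(\epsilon_1\) flips. Both tangent vectors reverse, so \(\epsilon_2\) is unchanged. And \(h\) becomes \([\ell_{\vec t}]^{-1}h^{-1}[\ell_{\vec t}]=h^{-1}\), since \([\ell_{\vec t}]=1\).
\item Hence \(\dax_\unkn(a)=\overline{\dax_\unkn(a)}\).
\end{itemize}
A double point of \(\ell_{\vec t}\) with \(\unkn\) would instead go to \(-\epsilon h\). This is why the hypothesis is needed; compare \(g+g^{-1}[\cknot]^{-1}\) in Lemma~\ref{lem:Phi}. The right square then follows from the one-to-one comparison with \(\mu_3\) plus a sign check.

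\textbf{Step~2 is sound in outline.} Over a double point \(p\) of \(A\), the double points of the family are the crossings, in the leaf space of your arc foliation, of the projections of the two Hopf circles \(C_1,C_2\) over \(p\). The signed count of crossings where \(C_1\) comes first on the leaf equals \(\mathrm{lk}(C_1,C_2)=\pm1\), and so does the count where \(C_2\) comes first. What remains is to check that the Dax sign at such a crossing is \(\epsilon_p\) times the crossing sign. Leaves meeting a single fibre twice give double point loops that are images under \(A\) of loops in \(\S^2\), hence trivial.

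Two smaller points. First, no quadratic compatibility of \(\dax_\unkn\) is needed: both composites out of \(\Gamma(\pi_2\X)\) are homomorphisms, and \(\Gamma(\pi_2\X)\) is generated by the elements \(\gamma(a)\), which map to \(a\circ H\). Second, injectivity in the top row holds because the preceding term \(H_4\wt\X\) in Whitehead's sequence vanishes, as \(\partial\X\neq\emptyset\).
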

The horizontal maps on the right are the Hurewicz map for the universal cover $\wt X$ and the natural quotient map.
The bottom row is exact by construction, whereas the top row is Whitehead's ``certain exact sequence'' \cite{Whitehead-Certain-Exact-Seq}, with the homomorphism out of $\Gamma(\pi_2X)$ determined by the quadratic map given by precomposition with the Hopf map $H\colon\S^3\to \S^2$. It is injective as the preceding term in Whitehead's sequence is $H_4\wt X$ which vanishes since $\partial X\neq\emptyset$.

\subsection{The Dax-whisk homomorphism}\label{subsec:dax-whisk}

The situation with indeterminacies is slightly more subtle in the case of circles $(e,C,M)=(\cknot,\S^1,N)$, because of the presence of the group extension describing $\pi_2$ of immersions. Using an indirect argument that goes through the case of arcs, in \cite{K-Dax} we showed that the indeterminacy group in this case is the sum of the corresponding group
\[
    \dax_\unkn(\pi_3(N\setminus\D^4))
\]
for the case of arcs based at $\unkn\colon\D^1\hra N\setminus\D^4$ --- obtained by removing a small ball near the basepoint of $\cknot$ --- and the group
\[
    \dax^{whisk}_\cknot(\Fix_{[\cknot]}(\pi_2N))
\]
that measures indeterminacy of the choice of the basepoint. The homomorphism
\begin{equation}\label{eq:dax-whisk-def}
    \dax^{whisk}_\cknot\colon \Fix_{[\cknot]}(\pi_2N) \ra \Z[\pi_1N\setminus1]
\end{equation}
    is computed as
\[
    \dax^{whisk}_\cknot(b)\coloneqq\Dax(H_b)
\]
for the regular nullhomotopy $H_b$ defined as follows. 
    
    First represent $b\in\pi_2N$ satisfying $b=\cknot\cdot b$ by a map 
    \[
    b\colon [0,1] \to \Omega N=\Map((\S^1, -1), (N, *))
    \]
    with $b(0) = b(1) = \const_*$. Then use ambient isotopy extension to lift this to a family $B\colon [0, 1] \to \Omega \Emb(\S^1, N)$ such that $B(1) = \const_\cknot$ and $B(t)(-1) = b(t)$. 
    
    Observe that the class of $B(0)|_{\S^1\setminus nbhd(-1)}$ in $\pi_1(\Embp(\D^1, N\setminus\D^4); \unkn)$ is trivial when included in $\pi_1(\Emb(\S^1, N); \cknot)$. What is more, the equality $b=\cknot\cdot b$ means that there is also a homotopy from the loop $B(0)|_{\S^1\setminus nbhd(-1)}$ of arcs to $\const_\unkn$ through immersed neat arcs; this is our desired regular nullhomotopy $H_b$. We refer to \cite[Thm.4.5]{K-Dax} for details.

\begin{example}\label{ex:nullhtpic2}
    If $\cknot$ is nullhomotopic, the homomorphism 
    $\dax^{whisk}_\cknot\colon\pi_2N\to\Z[\pi_1N\setminus1]$ is trivial.
    Indeed, we can assume that $\cknot$ is contained in a small ball, and the ambient isotopy extension $B$ can be constructed to have this ball in its support. This gives the required $H_b$ that is now an isotopy, so $\Dax(H_b)=0$.
\end{example}

\subsection{The Dax theorem}

\begin{theorem}[Dax~\cite{Dax}, Gabai~\cite{Gabai-disks}, K.--Teichner~\cite{KT-highd}, K.~\cite{K-Dax}]
\label{thm:Dax-dax}
    The Dax invariant from Algorithm~\ref{alg} induces isomorphisms
    \[\begin{tikzcd}[row sep=tiny]
        \Dax\colon\; \ker(\pi_1 p_{\X,\unkn})
        \rar[tail,two heads]{} &
        \faktor{\Z[\pi_1\X\setminus1]}{\dax_{\unkn}(\pi_3\X)},\\
        \Dax\colon\; \ker(\pi_1 p_{N,\cknot})
        \rar[tail,two heads]{} &
        \faktor{\Z[\pi_1N\setminus1]}{\dax_{\unkn}(\pi_3(N\setminus\D^4))
    \oplus\dax^{whisk}_\cknot(\Fix_{[\cknot]}(\pi_2N))}.
    \end{tikzcd}
    \]
    Moreover, these groups are generated by the families $\prealmap(g)$ from Figure~\ref{fig:realmap-Dax}, for $g\in\pi_1\X$ or $g\in\pi_1N$ respectively.
\end{theorem}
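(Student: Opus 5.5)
For the case of arcs I would follow the plan below; the case of circles then reduces to it.

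\emph{Well-definedness.} Given $f\in\ker(\pi_1p_{\X,\unkn})$, a regular nullhomotopy $F$ as in \eqref{eq:homotopy} exists by Corollary~\ref{cor:imarcs}. A generic such $F$ has only finitely many transverse double-point events. The source has dimension $1+2=3$ and the target $\X\tm[0,1]^2$ has dimension $6$, which gives isolated double points of the total map after making it generic. Hence Algorithm~\ref{alg} applies. Invariance under a generic homotopy of $F$ rel boundary follows by a bordism argument. In a $3$-parameter family the double points form $1$-manifolds whose endpoints lie on the boundary. Along such arcs both the double point loop and the sign are locally constant, and a birth or death of a pair of double points contributes opposite signs with the same group element. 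The one exception is a cusp event, where the group element is $1$; this is why we work in $\Z[\pi_1\X\setminus1]$. Two nullhomotopies $F,F'$ differ by the class $[FF']\in\pi_2(\Immp(\D^1,\X),\unkn)\cong\Z\oplus\pi_3\X$ from \eqref{eq:commut-schema}. Additivity of $\Dax$ under stacking, together with $\Dax(F_1)=1$, shows that $\Dax(F)-\Dax(F')$ lies in $\dax_\unkn(\pi_3\X)$ modulo $1$. Therefore $\Dax$ is a well-defined map to the stated quotient. It is a homomorphism because concatenation of loops corresponds to juxtaposition of nullhomotopies.

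\emph{Surjectivity.} By Exercise~\ref{exerc:realmap}, $\Dax(\prealmap(g))=\pm g$ for a nullhomotopy built from the normal ball. Since $\Dax$ is a homomorphism, the families $\prealmap(g)$ hit every generator, and so $\Dax$ is onto.

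\emph{Injectivity.} This is the main obstacle. Take a loop $f$ with $\Dax(F)\in\dax_\unkn(\pi_3\X)$. First replace $F$ by $F$ stacked with suitable spheres $A_\unkn$ (and cusp families), so that $\Dax(F)=0$ on the nose. Then the double points of $F$ can be paired with equal group elements and opposite signs. For each pair I would run a parametrized Whitney trick in the $6$-dimensional trace $\D^1\tm[0,1]^2\to\X\tm[0,1]^2$. Matching group elements give a Whitney circle. The Whitney disk is $2$-dimensional in a $6$-manifold, so it can be embedded and made disjoint from everything else. Its framing is also unobstructed. This step needs care to keep every slice an immersion of an arc, that is, to respect the parameter structure. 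I would use Dax's original approach or the argument of \cite{KT-highd}, Gabai's \cite{Gabai-disks}, for exactly this. After cancelling all pairs, $F$ is a homotopy through embeddings, so $f$ is trivial. By surjectivity this also shows that the $\prealmap(g)$ generate.

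\emph{Circles.} Remove a $4$-ball near $\cknot(-1)$ and compare $\Emb(\S^1,N)$ with $\Embp(\D^1,N\setminus\D^4)$ via the fibration that evaluates at the basepoint together with its derivative. The kernel for circles is then the quotient of the arc kernel by the image of the boundary map from $\pi_2$ of the base. On the $\Fix_{[\cknot]}(\pi_2N)$ part this image is exactly the whisker indeterminacy $\dax^{whisk}_\cknot$ defined via $H_b$; this is the argument of \cite[Thm.4.5]{K-Dax}. The remaining indeterminacy comes from $\pi_3(N\setminus\D^4)$ through $\dax_\unkn$, as in the arc case. Combining the two gives the second isomorphism, with generators again the $\prealmap(g)$.
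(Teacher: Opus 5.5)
Your proposal is correct and follows the same route as the paper. The kernel is the quotient of $\pi_2\big(\Immp(\D^1,\X),\Embp(\D^1,\X),\unkn\big)\cong\Z[\pi_1\X]$, computed by Dax's double-point count (for injectivity both you and the paper rely on \cite{Dax,KT-highd}), by the image of $\pi_2\Immp(\D^1,\X)\cong\Z\oplus\pi_3\X$, namely $\Z\cdot1+\dax_\unkn(\pi_3\X)$; generation by the $\prealmap(g)$ comes from Exercise~\ref{exerc:realmap}, and circles reduce to arcs plus the whisker indeterminacy as in \cite[Thm.4.5]{K-Dax}. One small correction: a homotopy of $F$ rel boundary stays among immersions, so no cusp events occur and $\Dax$ is already invariant in $\Z[\pi_1\X]$; the coefficient of $1$ is discarded only because the $\Z$-summand of $\pi_2\Immp(\D^1,\X)$ is generated by $F_1$ with $\Dax(F_1)=1$, which you do use in the next step.
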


For the last sentence, recall that in Exercise~\ref{exerc:realmap} we have described nullhomotopies $H_g$ for the loops $\prealmap(g)$, with $\Dax(H_g)=g$.

\begin{corollary}\label{cor:main}
	There are group extensions
\[
    \begin{tikzcd}[scale cd=0.9,row sep=tiny]
        \faktor{\Z[\pi_1\X\setminus1]}{\dax_{\unkn}(\pi_3\X)}\rar[tail]{\prealmap} 
        & \pi_1(\Embp(\D^1,\X), \unkn)\rar[two heads]{\pi_1 p_{\X,\unkn}} 
        & \pi_2\X\\
        \faktor{\Z[\pi_1N\setminus1]}{rel_{\cknot}}\rar[tail]{\prealmap} 
        & \pi_1(\Emb(\S^1,N), \cknot)\rar[two heads]{\pi_1 p_{N,\cknot}} 
        & \Big(\faktor{\pi_2N}{b\sim[\cknot]\cdot b}\Big).\Big(\Fix_{[\cknot]}(\pi_1N)\Big)
        \end{tikzcd}
\]
    for $rel_{\cknot}=\dax_{\unkn}(\pi_3(N\setminus\D^4))
    \oplus\dax^{whisk}_\cknot(\Fix_{[\cknot]}(\pi_2N))$.
\end{corollary}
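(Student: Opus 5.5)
The corollary should follow by assembling three results already stated: the surjectivity~\eqref{eq:pi1p} of the maps induced by including embeddings into immersions, the identification of $\pi_1$ of immersions (Corollaries~\ref{cor:imarcs} and~\ref{cor:imcircles}), and the Dax theorem (Theorem~\ref{thm:Dax-dax}). I treat the two rows in parallel; write $(e,C,M)$ for either $(\unkn,\D^1,\X)$ or $(\cknot,\S^1,N)$.

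\textbf{Surjectivity on the right.} By the general position argument after~\eqref{eq:emb-into-imm}, any loop of immersions can be perturbed to a loop of embeddings: double points of the trace $C\times[0,1]\imra M\times[0,1]$ are removed by pushing one sheet in the unique common normal direction, after making that direction normal to the time axis. So $\iota_M$ is onto on $\pi_1$. Composing with the isomorphisms of Corollaries~\ref{cor:imarcs} and~\ref{cor:imcircles}, which identify $\pi_1\Immp(\D^1,\X)\cong\pi_2\X$ and $\pi_1\Imm(\S^1,N)$ with the stated extension, gives surjectivity of $\pi_1p_{\X,\unkn}$ and $\pi_1p_{N,\cknot}$. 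These are group homomorphisms because they are induced by continuous based maps.

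\textbf{Identifying the kernel.} By Theorem~\ref{thm:Dax-dax}, $\Dax$ is an isomorphism from $\ker(\pi_1p)$ onto the stated quotient of $\Z[\pi_1M\setminus1]$. The relations are exactly $\dax_\unkn(\pi_3\X)$ for arcs and $rel_\cknot$ for circles. So the kernel is isomorphic to the left-hand group.

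\textbf{Naming the inclusion.} The inclusion should be the map $\prealmap$, sending $g\mapsto[\prealmap(g)]$ and extended $\Z$-linearly. We need $\prealmap$ to be the inverse of $\Dax$ on the kernel, and two facts give this.
\begin{itemize}
\item The loops $\prealmap(g)$ lie in the kernel. They are regularly nullhomotopic via the normal-ball nullhomotopy $H_g$ of Exercise~\ref{exerc:realmap}, and $\Dax(H_g)=g$.
\item By the last sentence of Theorem~\ref{thm:Dax-dax}, these classes generate the kernel.
\end{itemize}
Since $\Dax$ is a well-defined homomorphism on the kernel after quotienting, we get $\Dax\circ\prealmap=\mathrm{id}$ on generators $g$. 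Hence $\prealmap$ descends to the quotient and is injective there, with image the whole kernel. Exactness in the middle follows.

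\textbf{Expected obstacle.} The delicate point is ensuring that $\prealmap$, defined on the free abelian group $\Z[\pi_1M\setminus1]$, is a homomorphism into the possibly nonabelian group $\pi_1\Emb$. This requires the kernel to be abelian, which I would obtain by transport of structure through the isomorphism $\Dax$.

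It also requires $\prealmap(1)$ to be trivial, so that the map is defined on $\pi_1M\setminus1$. For this I would use that the cusp nullhomotopy changes $\Dax$ by $1$. With $\Dax$ treated as given, all of this is formal.
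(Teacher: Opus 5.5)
Your proposal is correct and matches the paper, which gives no separate proof but presents the corollary as the immediate combination of the surjections \eqref{eq:pi1p} with Theorem~\ref{thm:Dax-dax}, with $\prealmap$ inverse to $\Dax$ on $\ker(\pi_1 p)$ because $\Dax(H_g)=g$ for the nullhomotopies of Exercise~\ref{exerc:realmap}. Your resolution of the homomorphism issue is right: the kernel is abelian since $\Dax$ is a group isomorphism onto an abelian group, so $\Dax\circ\prealmap$ is the quotient map. The case $g=1$ needs no separate argument, since $1$ is not a generator of $\Z[\pi_1M\setminus1]$ (and if one insists, $\Dax(\prealmap(1))=1\sim0$ together with injectivity of $\Dax$ settles it).
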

One can show that these are central extensions and express the commutator pairing~\cite[Prop.3.11]{KT-4dLBT}. In particular, the fundamental groups of these embedding spaces are rarely abelian.

\begin{remark}\label{rem:circles-from-arcs}
	Given $(\cknot,\S^1,N)$ define $(\unkn,\D^1,X)\coloneqq (\cknot\setminus(\cknot\cap\D^4), \D^1, N\setminus\D^4)$. Then the kernel of $\pi_1 p_{N,\cknot}$ can be described as the quotient of the kernel of $\pi_1 p_{\X,\unkn}$ by the subgroup $\dax^{whisk}_\cknot(\Fix_{[\cknot]}(\pi_2N))$.
\end{remark}

\section{Examples}\label{sec:examples}

Throughout this section $\X$ and $N$ are smooth $4$-manifolds, and we fix basepoint embeddings $\unkn\colon\D^1\hra\X$ and $\cknot\colon\S^1\hra N$.

\subsection{Arcs}

The following two results follow immediately from Corollary~\ref{cor:main}.
\begin{corollary}\label{cor:1-trivial}
    If $\X$ is simply connected then $\pi_0\Arcs{\X}=\{\unkn\}$ and embeddings versus immersions cannot be distinguished by $1$-parameter families. In fact, such families precisely correspond to second homology classes: 
    \begin{align*}
        \pi_1(\Arcs{\X},\unkn)&\cong\pi_1(\ImArcs{\X},\unkn)\cong\pi_2\X\cong H_2(\X;\Z).
    \end{align*}
\end{corollary}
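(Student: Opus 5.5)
The plan is to read off everything from Corollary~\ref{cor:imarcs}, Corollary~\ref{cor:main}, and Hurewicz, with $\pi_1\X=1$ throughout. Here $\X$ is a 4-manifold with nonempty boundary and $\unkn$ a neat arc.

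\textbf{Path components.} First I will show that $\pi_0\Arcs{\X}=\{\unkn\}$, that is, any two neat embedded arcs with the same boundary are isotopic. By Corollary~\ref{cor:imarcs} we have $\pi_0\ImArcs{\X}\cong\pi_1\X=1$, so any neat arc $k$ is regularly homotopic rel boundary to $\unkn$. A generic regular homotopy of arcs in a 4-manifold has trace $\D^1\tm[0,1]\imra\X\tm[0,1]$, a 2-manifold in a 5-manifold. Such a trace has no double points generically, since $2+2<5$. Hence the regular homotopy can be perturbed to an isotopy. This is the dimension-count argument used for~\eqref{eq:emb-into-imm}, one degree lower.

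\textbf{Fundamental group.} Apply the first extension of Corollary~\ref{cor:main}. Its kernel is a quotient of $\Z[\pi_1\X\setminus1]$, and since $\pi_1\X=1$ the set $\pi_1\X\setminus1$ is empty. So the kernel is $\Z[\emptyset]=0$, and $\pi_1 p_{\X,\unkn}$ is an isomorphism onto $\pi_2\X$. The map $\pi_1p_{\X,\unkn}$ is the composite of $\iota_\X$ from~\eqref{eq:emb-into-imm} with the isomorphism $\pi_1\ImArcs{\X}\cong\pi_2\X$ from Corollary~\ref{cor:imarcs} (the case $n=1\le d-3$). It follows that $\iota_\X$ is an isomorphism too, which is the claim that embeddings and immersions cannot be distinguished by $1$-parameter families.

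\textbf{Identification with homology.} Finally, $\X$ is simply connected, so the Hurewicz theorem gives $\pi_2\X\cong H_2(\X;\Z)$.

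There is no real obstacle, since Corollary~\ref{cor:main} does all the work. The only point needing care is the $\pi_0$ statement, which is not literally covered by Corollary~\ref{cor:main}. There I will rely on the general position argument above rather than the Dax machinery.
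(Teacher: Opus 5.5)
Your proposal is correct and takes the same route as the paper, which only remarks that the statement follows immediately from Corollary~\ref{cor:main}: the kernel $\Z[\pi_1\X\setminus1]$ is zero when $\pi_1\X=1$, and Hurewicz gives the last isomorphism. Your general-position argument for the $\pi_0$ claim fills in a step the paper leaves implicit. That argument combines $\pi_0\ImArcs{\X}\cong\pi_1\X=1$ with the fact that a generic $1$-parameter family of arcs in a $4$-manifold has no double points.
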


The following is another extreme situation.
\begin{corollary}
    If $\pi_2\X=\pi_3\X=0$, then
    \[\pi_1(\Embp(\D^1,\X),\unkn)\cong \Z[\pi_1\X\setminus1].
    \]
\end{corollary}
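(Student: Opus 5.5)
This follows directly from the first extension in Corollary~\ref{cor:main}, applied to the arc $\unkn\colon\D^1\hra\X$. That extension reads
\[
    \faktor{\Z[\pi_1\X\setminus1]}{\dax_{\unkn}(\pi_3\X)}\hookrightarrow \pi_1(\Embp(\D^1,\X),\unkn)\twoheadrightarrow \pi_2\X .
\]
The plan is to feed the two vanishing hypotheses into the two outer terms.

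First, since $\pi_2\X=0$, the quotient in the extension is trivial. Exactness then says the map $\prealmap$ from the kernel term is surjective. Being injective by Corollary~\ref{cor:main}, it is an isomorphism onto $\pi_1(\Embp(\D^1,\X),\unkn)$. Equivalently, by Corollary~\ref{cor:imarcs}, $\pi_1(\Immp(\D^1,\X),\unkn)\cong\pi_2\X=0$, so every loop of embedded arcs lies in $\ker(\pi_1 p_{\X,\unkn})$, and Theorem~\ref{thm:Dax-dax} applies to all of $\pi_1$.

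Second, since $\pi_3\X=0$, the Dax homomorphism $\dax_\unkn\colon\pi_3\X\to\Z[\pi_1\X\setminus1]$ from \eqref{eq:dax-u-def} has trivial source. Its image is therefore zero, and the kernel term is all of $\Z[\pi_1\X\setminus1]$. Combining the two steps, $\Dax$ (with inverse $\prealmap$) gives the isomorphism
\[
\pi_1(\Embp(\D^1,\X),\unkn)\cong\Z[\pi_1\X\setminus1].
\]
The group is in particular abelian, with no extension issues left.

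There is no genuine obstacle here. All the real work is in Theorem~\ref{thm:Dax-dax}, namely the well-definedness of $\Dax$ modulo the indeterminacy and its surjectivity via the families $\prealmap(g)$, and we take that theorem as given.

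The one point to check is that the indeterminacy really is only $\dax_\unkn(\pi_3\X)$ together with the discarded coefficient of $1$. This uses $\pi_2(\Immp(\D^1,\X),\unkn)\cong\Z\oplus\pi_3\X$ from Corollary~\ref{cor:imarcs}. The $\Z$ summand is generated by the cusp nullhomotopy, with $\Dax=1$, and it is accounted for by passing to $\Z[\pi_1\X\setminus1]$. Under our hypotheses the $\pi_3\X$ summand contributes nothing.
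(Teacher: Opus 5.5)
Your proposal is correct and follows the paper's route: substitute $\pi_2\X=0$ and $\pi_3\X=0$ into the first extension of Corollary~\ref{cor:main}. The right-hand term then vanishes and $\dax_\unkn$ has trivial image, which is exactly why the paper says the result follows immediately.
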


Here are some examples that satisfy these conditions.

\begin{example}
We have
\[
    \pi_1(\Embp(\D^1,\S^1\tm\D^3),\unkn)
    \cong \Z[\Z\setminus1]
    \cong \faktor{\Z[t,t^{-1}]}{1\sim 0}
    \cong\Z^\infty,
\]
with generators $t, t^{-1}, t^2, t^{-2},\dots$.
\end{example}

\begin{example}
    Let $\X$ be a fiber bundle with fiber $\Sigma_g\coloneqq (\S^1\tm\S^1)^{\#g}$ over the surface $\Sigma_{h,1}\coloneqq \Sigma_h\setminus\D^2$, for some $g,h\geq1$. This space is determined by a homomorphism 
    \[
    \psi\colon\pi_1\Sigma_{h,1}\cong F_h\to \pi_0\Diff(\Sigma_g),
    \]
    since the base is homotopy equivalent to a wedge of circles. We thus write
    \[
    \X\cong \Sigma_g\rtimes_\psi\Sigma_{h,1}.
    \]
    The homotopy long exact sequence implies that $\pi_n\X=0$ for all $n\geq2$, and $\pi_1\Sigma_g\hra \pi_1\X\sra F_h$. This splits, and the action $\psi_*\colon F_h\to Aut(\pi_1\Sigma_g)$ is induced by $\psi$. We have
    \[
    \pi_1(\Embp(\D^1,\Sigma_g\rtimes_\psi\Sigma_{h,1}),\unkn)
    \cong \faktor{\Z[(\pi_1\Sigma_g)\rtimes_{\psi_*}F_h]}{1\sim 0}\cong  \faktor{\Z[\pi_1\Sigma_g]\rtimes_{\psi_*}F_h}{1\sim 0},
    \]
    the group ring of $\pi_1\Sigma_g$, twisted by the action $\psi_*$, modulo the trivial group element.
\end{example}

Another interesting family of cases is provided by punctured manifolds.
\begin{corollary}\label{cor:X-from-N}
    Fix a small 4-ball $\D^4\subset N$. Let $\unkn\colon\D^1\hra N\setminus\D^4$ be an arc with endpoints on $\Phi=\partial\D^4$, and join them on $\Phi$ to get a loop $[\cknot]\in\pi_1N$. Then 
    \[\begin{tikzcd}
    \faktor{\Z[\pi_1N]}{1\sim 0, g\sim -g^{-1}[\cknot]^{-1},\dax_{\unkn}(\pi_3N)}\rar[tail] 
    & \pi_1(\Emb(\D^1,N\setminus\D^4), \unkn)\rar[two heads]
    & \pi_2N.
    \end{tikzcd}
\]
\end{corollary}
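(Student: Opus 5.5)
We apply the first extension of Corollary~\ref{cor:main} with $\X=N\setminus\D^4$ and base arc $\unkn$. This gives an extension with kernel $\Z[\pi_1\X\setminus1]/\dax_\unkn(\pi_3\X)$ and quotient $\pi_2\X$. What remains is to express $\pi_1\X$, $\pi_2\X$ and $\pi_3\X$ in terms of $N$, and then to compute $\dax_\unkn$ on the part of $\pi_3\X$ that does not come from $\pi_3N$.

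First, removing a $4$-ball does not change $\pi_1$ or $\pi_2$, since the inclusion $N\setminus\D^4\hra N$ is $3$-connected (general position: $N$ is obtained from $N\setminus\D^4$ by attaching a $4$-cell). So $\pi_1\X\cong\pi_1N$ and $\pi_2\X\cong\pi_2N$, which handles the quotient. The kernel $\Z[\pi_1\X\setminus1]$ is identified with $\Z[\pi_1N]/(1\sim0)$.

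For $\pi_3$, the long exact sequence of the pair $(N,N\setminus\D^4)$ (or of the cell attachment) gives an exact sequence
\[
\pi_4(N,N\setminus\D^4)\ra\pi_3(N\setminus\D^4)\ra\pi_3N\ra 0.
\]
By relative Hurewicz in the universal cover, $\pi_4(N,N\setminus\D^4)\cong\Z[\pi_1N]$, generated as a module by the $4$-cell. Its boundary is the class $[\Phi]$ of $\Phi=\partial\D^4$. Hence $\pi_3\X$ is generated by the elements $g[\Phi]$ for $g\in\pi_1N$, together with lifts of elements of $\pi_3N$. Since $\dax_\unkn$ is a homomorphism, its image is the subgroup generated by the $\dax_\unkn(g[\Phi])$ and by $\dax_\unkn$ of the chosen lifts.

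Lemma~\ref{lem:Phi} computes $\dax_\unkn(g[\Phi])=g+g^{-1}[\cknot]^{-1}$. Quotienting by these elements gives exactly the relations $g\sim -g^{-1}[\cknot]^{-1}$ in the statement. Here $[\cknot]$ is the loop obtained by closing $\unkn$ along $\Phi$, which is the setting of the lemma.

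The remaining relations come from lifts of $\pi_3N$. The term $\dax_\unkn(\pi_3N)$ in the statement should be read as $\dax_\unkn$ applied to any lift of each class. This is well-defined modulo the previous relations, because two lifts differ by an element of the $\Z[\pi_1N]$-span of $[\Phi]$. Combining these with the relations above yields the displayed extension.

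The main obstacle is the $\pi_3$ step. One must check that the image of $\pi_4(N,N\setminus\D^4)$ is precisely the $\pi_1$-span of $[\Phi]$, with the correct basepoint and whiskering conventions so that Lemma~\ref{lem:Phi} applies verbatim to $g[\Phi]$. One must also make sure that when the element $1$ is discarded, the case $g=[\cknot]^{-1}$ (where $g^{-1}[\cknot]^{-1}=1$) is treated consistently with the relation $1\sim0$. I would handle this by working throughout in $\Z[\pi_1N]$ modulo $1$ and noting that Lemma~\ref{lem:Phi} is stated in that quotient.
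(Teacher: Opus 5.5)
Your proof is correct and follows the paper's route. You apply Corollary~\ref{cor:main} to $\X=N\setminus\D^4$, use that $\pi_3(N\setminus\D^4)$ is generated by the classes $g[\Phi]$ together with lifts of $\pi_3N$, and evaluate $\dax_\unkn(g[\Phi])$ by Lemma~\ref{lem:Phi}. The one difference is that the paper cites Lemma~\ref{lem:rational-h-s}, which also asserts injectivity of $\Z[\pi_1N]\to\pi_3(N\setminus\D^4)$ and so requires a separate treatment of simply connected rational homology spheres via Corollary~\ref{cor:1-trivial}; you use only the right-exact part of the long exact sequence of the pair, which is all the argument needs and holds without that case split.
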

Note that here we abuse notation: we have so far defined $\dax_\unkn$ only for classes in $\pi_3(N\setminus\D^4)$, because $\unkn$ lives in $N\setminus\D^4$. However, the proof of the corollary will show that after modding out by the relations $g\sim -g^{-1}[\cknot]^{-1}$ we can push any map $\S^3\to N$ off the ball $\D^4$, and compute its $\dax_\unkn$-invariant.

For the proof we will need the following fact, see for example~\cite[Lem.4.9]{K-Dax}.
\begin{lemma}\label{lem:rational-h-s}
    Assume $N$ is not a simply connected rational homology sphere. Then $\pi_1(N\setminus\D^4)\cong\pi_1N$ and $\pi_2(N\setminus\D^4)\cong\pi_2N$, and we have
    \[\begin{tikzcd}
        \Z[\pi_1N]\rar[tail] & \pi_3(N\setminus\D^4)\rar[two heads] & \pi_3N,
    \end{tikzcd}
    \]
    where the first map sends $g\in\pi_1N$ to $g[\Phi]$, for $\Phi\colon\S^3\hra N\setminus\D^4$.
\end{lemma}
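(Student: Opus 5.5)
The plan is to compare $N$ with $X\coloneqq N\setminus\D^4$ using the cofibration $\S^3\xrightarrow{\Phi}X\hookrightarrow N$, i.e.\ $N\simeq X\cup_\Phi e^4$, and to use the hypothesis that $N$ is not a simply connected rational homology sphere to control the relative homotopy groups.

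\textbf{Low degrees.} The pair $(N,X)$ is $3$-connected, since $N$ is obtained from $X$ by attaching a single $4$-cell. Hence $\pi_1X\to\pi_1N$ is an isomorphism and $\pi_2X\to\pi_2N$ is surjective. For $\pi_2$ and $\pi_3$ we pass to universal covers. Since $\pi_1X\cong\pi_1N$, the cover $\wt N$ is obtained from $\wt X$ by attaching one $4$-cell along each lift $g\Phi$, $g\in\pi_1N$. By the relative Hurewicz theorem,
\[
\pi_4(N,X)\cong\pi_4(\wt N,\wt X)\cong H_4(\wt N,\wt X)\cong\Z[\pi_1N],
\]
with the boundary map $\partial\colon\pi_4(N,X)\to\pi_3X$ sending $g\mapsto g[\Phi]$. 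The long exact sequence
\[
\pi_4N\to\Z[\pi_1N]\xrightarrow{\partial}\pi_3X\to\pi_3N\to\pi_3(N,X)=0\to\pi_2X\to\pi_2N\to0
\]
gives the isomorphism on $\pi_2$ and the surjection on $\pi_3$, and the map out of $\Z[\pi_1N]$ is $g\mapsto g[\Phi]$ as claimed. It remains to show that $\partial$ is injective, i.e.\ that $\pi_4N\to\pi_4(N,X)$ is zero.

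\textbf{Injectivity of $\partial$.} This map factors through Hurewicz:
\[
\pi_4N=\pi_4\wt N\to H_4(\wt N)\to H_4(\wt N,\wt X)=\Z[\pi_1N].
\]
So it suffices to show $H_4(\wt N)=0$, or failing that, that $H_4(\wt N)\to H_4(\wt N,\wt X)$ vanishes. If $\pi_1N$ is infinite, then $\wt N$ is a noncompact $4$-manifold, so $H_4(\wt N)=0$. If $N$ has nonempty boundary, then $H_4(\wt N)=0$ as well. The remaining case is $N$ closed with $\pi_1N$ finite, where $H_4(\wt N)\cong\Z$ is generated by the fundamental class, which maps to the norm element $\sum_g g$. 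Here the map is nonzero, so $\partial(\sum_g g)=0$, and the claim genuinely needs the hypothesis.

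\textbf{The main obstacle} is this closed, finite-$\pi_1$ case. If $\pi_1N=1$, the fundamental class of $N$ is in the Hurewicz image (so $\partial[N]=[\Phi]=0$, i.e.\ $\Phi$ is null in $\pi_3X$) iff there is a degree one map $\S^4\to N$, iff $N$ is a rational, and hence integral, homology sphere; this is exactly the excluded case. In general one must show that the norm element is not hit from $\pi_4\wt N$. I would try to show that the Hurewicz image in $H_4(\wt N)\cong\Z$ has index divisible by something forced by nontrivial $H_2(\wt N;\Q)$, for instance via cup products or the intersection form on $\wt N$. I expect this to be the hardest step. If it does not go through as stated, the lemma is to be read as the reference~\cite[Lem.4.9]{K-Dax} intends, with the remaining closed finite-$\pi_1$ cases covered by its precise hypotheses.
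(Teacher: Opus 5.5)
\emph{Verdict: there is a genuine gap.} Your reduction is correct. The paper gives no argument of its own here and only cites the reference, and your reduction is the natural proof to aim for: the pair $(N,X)$ is $3$-connected, $\pi_4(N,X)\cong H_4(\wt N,\wt X)\cong\Z[\pi_1N]$ with $\partial(g)=g[\Phi]$, and injectivity of $\partial$ is equivalent to the vanishing of $\pi_4\wt N\to H_4(\wt N)\to H_4(\wt N,\wt X)$. You settle this whenever $H_4(\wt N)=0$. The gap is the closed, finite-$\pi_1$ case, which you leave open, and the strategy you sketch there cannot close it. If the Hurewicz image in $H_4(\wt N)\cong\Z$ were $d\Z$ for some $d\neq0$, then $\partial$ would already kill $d\sum_g g\neq0$. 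So you need that image to be zero, not merely of controlled index. For the same reason, when $\pi_1N=1$ the relevant condition is a map $\S^4\to N$ of nonzero degree, not degree one.

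Two ingredients are missing. First, run the cup-product argument on $\wt N$ itself, which is closed and simply connected. For $f\colon\S^4\to\wt N$ and $x,y\in H^2(\wt N;\Q)$ one has $\deg(f)\,\langle x\cup y,[\wt N]\rangle=\langle f^*x\cup f^*y,[\S^4]\rangle=0$. By nondegeneracy of the rational intersection form, $\deg f=0$ whenever $b_2(\wt N)>0$. Hence the Hurewicz map $\pi_4\wt N\to H_4(\wt N)$ vanishes and $\partial$ is injective.

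Second, if $b_2(\wt N)=0$, then $\chi(\wt N)=2+b_2(\wt N)=2$, and $\chi(\wt N)=|\pi_1N|\,\chi(N)$. Here you must use that $N$ is orientable. The paper's setting implicitly requires this: the double point and intersection signs use an orientation of $N$, and the relations carry no orientation character. For closed orientable $N$ with finite $\pi_1$ one has $b_1(N)=b_3(N)=0$, so $\chi(N)=2+b_2(N)\geq2$. This forces $\pi_1N=1$ and $b_2(N)=0$, so $N$ is a simply connected rational homology sphere, which is exactly the excluded case.

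Orientability cannot be dropped. For $N=\R P^4$ one has $N\setminus\D^4\simeq\R P^3$, so $\pi_3(N\setminus\D^4)\cong\Z$ cannot contain $\Z[\Z/2]$. Your proposed fallback of reading the lemma through the reference's hypotheses therefore does not replace this step: the needed hypothesis is orientability, and it has to enter through the Euler characteristic count above.
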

\begin{proof}[Proof of Corollary~\ref{cor:X-from-N}]
    If $N$ is a simply connected rational homology sphere, then $\pi_1(\Emb(\D^1,N\setminus\D^4), \unkn)\cong\pi_2N$ by Corollary~\ref{cor:1-trivial}, so the claim holds.
    Otherwise, we are in the situation of Lemma~\ref{lem:rational-h-s}, so we have
    \[
    \faktor
    {\Z[\pi_1(N\setminus\D^4)\setminus1]}
    {\dax_{\unkn}(\pi_3(N\setminus\D^4))}
    \cong \faktor{\left(\faktor{\Z[\pi_1N\setminus1]}{\langle\dax_\unkn(g\Phi):g\in\pi_1N\rangle}\right)}{\dax_{\unkn}(\pi_3N)}.
    \]
    In other words, $\dax_{\unkn}$ can be defined on $\pi_3N$, once we quotient out $\dax_\unkn(\Z[\pi_1N])$. 
    
    Finally, by Lemma~\ref{lem:Phi} we compute $\dax_\unkn(g[\Phi])= g + g^{-1}[\cknot]^{-1}$, proving the desired statement.
\end{proof}

\begin{remark}
	Note that applying the relation $g\sim-g^{-1}[\cknot]^{-1}$ once again gives \[
    g^{-1}[\cknot]^{-1} \sim -(g^{-1}[\cknot]^{-1})^{-1}[\cknot]^{-1}.
    \] 
    This implies that for any $g\in\pi_1N$ the relation $g\sim [\cknot] g [\cknot]^{-1}$ will hold in the quotient.
\end{remark}

Here is an example that uses Corollary~\ref{cor:X-from-N}.

\begin{example}\label{ex:intro-arcs}
	Consider the manifold $N=\S^1\tm\D^3\#\S^1\tm\S^3$ with  $\cknot=\S^1\tm p_{\S^{3}}\subset N$  and $G\coloneqq [p_{\S^{1}}\tm\S^{3}]$. Let $\unkn\colon \D^1\hra N\setminus\D^4$ be the arc obtained from $\cknot$ by removing the $4$-ball. Then
	\[
	\pi_1(\Embp(\D^1,\S^1\tm\D^3\#(\S^1\tm\S^3\setminus\D^4)), \unkn) 
	\cong 
	\faktor{\Z[\mathbb{F}_2]}{1\sim0, g\sim -g^{-1}t^{-1},\dax_{\unkn}(gG)}.
	\]
	Indeed, this follows from Corollary~\ref{cor:X-from-N} using the observation that
\[
	N\cong \S^1\tm\D^3\natural (\S^1\tm\S^3\setminus\D^{4}) \simeq \S^{1}\vee\S^{1}\vee\S^{3}
\]
	to compute $\pi_1N\cong\mathbb{F}_{2}$, generated by $s=[\S^1\tm\{p_{\D^{3}}\}]$ and $t=[\S^1\tm\{p_{\S^{3}}\}]=[\cknot]$, and $\pi_{2}N=0$, and $\pi_{3}N\cong\Z[\mathbb{F}_{2}]$, generated by $G=[p_{\S^{1}}\tm\S^{3}]$.
\end{example}

One can compute the relations $\dax_{\unkn}(gG)$.
\begin{exercise}\label{ex:intro-computation}
	In the setting of Example~\ref{ex:intro-arcs} for $g\in\mathbb{F}_{2}=\langle s\rangle\ast\langle t\rangle$, show that
	\[
	\dax_{\unkn}(gG)=\begin{cases}
		g, & \text{if } g=s^k,\\
		g-gH_g+\ol{H_g}g^{-1}, & \text{if } g=s^k\prod_{i=1}^n t^{\e_{i}}s^{k_{i}} \text{ for } n\geq0,k_{i}\in\Z,\e_{i}\in\{\pm1\},
	\end{cases}
	\]
where $H_g\coloneqq \sum_{j=1}^{n}\e_{j}(s^k\prod_{i=1}^{j-1} t^{\e_{i}}s^{k_{i}})^{-1}$.
\emph{Hint.} Use \cite[Cor.B(v)]{K-Dax}.
\end{exercise}

\begin{question}
	How can one simplify this presentation of the abelian group $\pi_1(\Embp(\D^1,\S^1\tm\D^3\#\S^1\tm\S^3\setminus\D^4), \unkn)$?
\end{question}

\subsection{Circles}

\begin{corollary}\label{cor:1-trivial-circ}
    If $N$ is simply connected then $\pi_0\Emb(\S^1,N)=\{\cknot\}$ and embeddings versus immersions cannot be distinguished by $1$-parameter families. In fact, such families precisely correspond to second homology classes: 
    \begin{align*}       
    \pi_1(\Emb(\S^1,N),\cknot)&\cong\pi_1(\Imm(\S^1,N),\cknot)\cong \pi_2N\cong H_2(N;\Z).
    \end{align*}
\end{corollary}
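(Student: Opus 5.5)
The corollary should follow directly from Corollary~\ref{cor:main} in the circle case, together with the immersion computations of Corollary~\ref{cor:imcircles}. I would handle the two claims separately.

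For $\pi_0$: every embedding $\S^1\hra N$ is nullhomotopic since $\pi_1N=1$. I will argue that any two embeddings are isotopic. First, by Corollary~\ref{cor:imcircles}, $\pi_0\Imm(\S^1,N)$ is the set of conjugacy classes in $\pi_1\S TN$. The fibration $\S^3\to\S TN\to N$ with $\pi_1\S^3=\pi_0\S^3=0$ gives $\pi_1\S TN\cong\pi_1N=1$, so all immersed circles are regularly homotopic. Second, a generic regular homotopy between two embedded circles is a $1$-parameter family of maps $\S^1\to N^4$. Its trace is an immersed $2$-manifold in a $5$-manifold, and the argument preceding \eqref{eq:emb-into-imm}, applied now to a path rather than a loop, pushes off the finitely many double points. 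This yields an isotopy, so $\pi_0\Emb(\S^1,N)=\{\cknot\}$.

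For $\pi_1$: I will use the second extension in Corollary~\ref{cor:main}. Since $\pi_1N=1$, the group ring $\Z[\pi_1N\setminus1]$ is zero, so the kernel term vanishes and $\pi_1p_{N,\cknot}$ is an isomorphism onto
\[
  \Big(\faktor{\pi_2N}{b\sim[\cknot]\cdot b}\Big).\Big(\Fix_{[\cknot]}(\pi_1N)\Big).
\]
Here $[\cknot]=1$ acts trivially on $\pi_2N$, and $\Fix_{[\cknot]}(\pi_1N)$ is trivial, so this group is just $\pi_2N$. Equivalently, Corollary~\ref{cor:imcircles} identifies it with $\pi_1(\Imm(\S^1,N),\cknot)$, and the map $\iota_N$ from \eqref{eq:emb-into-imm} is the isomorphism. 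Finally, $\pi_2N\cong H_2(N;\Z)$ by the Hurewicz theorem, since $N$ is simply connected.

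The only step with content beyond bookkeeping is the $\pi_0$ claim. Its key input is that the trace of a regular homotopy has self-intersections that can be removed by the same $2(5-2)-5=1$ normal-direction argument already used for surjectivity in \eqref{eq:emb-into-imm}. I would state that it applies verbatim to paths and not only to loops.
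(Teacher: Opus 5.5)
Your proposal is correct and is essentially the paper's argument. The paper says only that the statement is immediate from Corollary~\ref{cor:main}: the kernel $\Z[\pi_1N\setminus1]/rel_{\cknot}$ vanishes and the immersion extension collapses to $\pi_2N\cong H_2(N;\Z)$, exactly as you compute. Your separate $\pi_0$ argument makes explicit what the paper leaves implicit: all immersed circles are regularly homotopic since $\pi_1\S TN\cong\pi_1N=1$, and a regular homotopy between embeddings can be perturbed to an isotopy (generically it already is one, since $\dim(I\times\S^1\times\S^1)=3<4$).
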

\begin{proof}
    This is immediate from Corollary~\ref{cor:main}.
\end{proof}

\begin{corollary}
    If $\cknot$ is nullhomotopic, then 
\[\begin{tikzcd}
        \faktor{\Z[\pi_1N]}{1\sim 0, g\sim -g^{-1},\dax_{\unkn}(\pi_3N)}
        \rar[tail] 
        & \pi_1(\Emb(\S^1,N), \cknot)\rar[two heads]
        & \pi_2N\rtimes\pi_1N.
        \end{tikzcd}
\]
\end{corollary}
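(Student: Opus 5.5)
We specialize the second extension of Corollary~\ref{cor:main} to the case $[\cknot]=1$, identifying its kernel and its quotient separately.

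\emph{The quotient.} Since $\cknot$ is nullhomotopic, the action of $[\cknot]$ on $\pi_2N$ is trivial, so $\faktor{\pi_2N}{b\sim[\cknot]\cdot b}=\pi_2N$. The centralizer of the trivial element is everything, so $\Fix_{[\cknot]}(\pi_1N)=\pi_1N$. The extension~\eqref{eq:pi1-imm} splits as $\pi_2N\rtimes\pi_1N$ with the usual action, by Exercise~\ref{ex:nullhtpic}. We take the section given by small loops: the map $\Imm(\S^1,N)\to\S TN$ evaluating at a basepoint has a section, since $\cknot$ may be isotoped into a small ball and then dragged along.

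\emph{The kernel.} By Corollary~\ref{cor:main}, the kernel is $\faktor{\Z[\pi_1N\setminus1]}{rel_\cknot}$ with
\[
rel_\cknot=\dax_\unkn(\pi_3(N\setminus\D^4))\oplus\dax^{whisk}_\cknot(\pi_2N).
\]
By Example~\ref{ex:nullhtpic2}, the second summand vanishes, so only $\dax_\unkn(\pi_3(N\setminus\D^4))$ remains.

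This is exactly the group treated in the proof of Corollary~\ref{cor:X-from-N}, with $(\unkn,\D^1,N\setminus\D^4)$ obtained by removing a small ball around $\cknot(-1)$. There are two cases.
\begin{itemize}
\item If $N$ is not a simply connected rational homology sphere, Lemma~\ref{lem:rational-h-s} gives $\pi_3(N\setminus\D^4)$ as an extension of $\pi_3N$ by $\Z[\pi_1N]\cdot[\Phi]$. Lemma~\ref{lem:Phi} with $[\cknot]=1$ gives $\dax_\unkn(g[\Phi])=g+g^{-1}$. Quotienting by these relations yields $g\sim-g^{-1}$. Adding the coefficient relation $1\sim0$, which accounts for passing from $\Z[\pi_1N\setminus1]$ to $\Z[\pi_1N]$, gives the displayed kernel. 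The remaining relations $\dax_\unkn(\pi_3N)$ are well defined after these identifications, as remarked after Corollary~\ref{cor:X-from-N}.
\item If $N$ is a simply connected rational homology sphere, then $\pi_1N=1$. Both the kernel and $\Z[\pi_1N]/(1\sim0)$ are trivial, so the statement reduces to Corollary~\ref{cor:1-trivial-circ}.
\end{itemize}

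\emph{Main obstacle.} The one point needing care is well-definedness of $\dax_\unkn$ on $\pi_3N$. A lift of a class in $\pi_3N$ to $\pi_3(N\setminus\D^4)$ is determined only up to $\Z[\pi_1N][\Phi]$, and changing the lift changes $\dax_\unkn$ by an element of the form $g+g^{-1}$, which is already zero in the quotient. This is exactly the argument of Corollary~\ref{cor:X-from-N}, so it transfers verbatim.
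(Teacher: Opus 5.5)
Your proposal is correct and follows essentially the same route as the paper. You specialize Corollary~\ref{cor:main}, use Exercise~\ref{ex:nullhtpic} for the quotient, use Example~\ref{ex:nullhtpic2} to kill the $\dax^{whisk}_\cknot$ relations, and get $g\sim -g^{-1}$ from Lemma~\ref{lem:Phi} via the argument of Corollary~\ref{cor:X-from-N}; you simply spell out that last step, and the simply connected rational homology sphere case, which the paper's one-line proof leaves implicit.
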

\begin{proof}
    This follows from Corollary~\ref{cor:main}, Exercise~\ref{ex:nullhtpic} and Example~\ref{ex:nullhtpic2}.
\end{proof}

\begin{remark}
    If $\cknot$ is nullhomotopic, then $\unkn\colon\D^1\hra N\setminus\D^4$ is homotopic into the boundary, and computations of $\dax_\unkn$ become somewhat simpler; see~\cite{K-Dax}.
\end{remark}

\begin{corollary}\label{cor:pi2-pi3-0}
     If $\pi_2N=\pi_3N=0$, then
\[\begin{tikzcd}
        \faktor{\Z[\pi_1N\setminus1]}{g\sim -g^{-1}[\cknot]^{-1}}\rar[tail] & \pi_1(\Emb(\S^1,N), \cknot)\rar[two heads]
        & \Fix_{[\cknot]}(\pi_1N).
    \end{tikzcd}
\]
\end{corollary}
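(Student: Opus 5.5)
We specialize the second extension of Corollary~\ref{cor:main} to the case $\pi_2N=\pi_3N=0$. The argument splits into three parts: simplify the quotient (image) term, simplify the kernel term via the relations $rel_\cknot$, and check that nothing is lost when passing from $\pi_3(N\setminus\D^4)$ to $\pi_3N$.

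\emph{The quotient term.} Since $\pi_2N=0$, the factor $\faktor{\pi_2N}{b\sim[\cknot]\cdot b}$ vanishes. The extension~\eqref{eq:pi1-imm} therefore collapses, and the right-hand term is just $\Fix_{[\cknot]}(\pi_1N)$. For the same reason $\Fix_{[\cknot]}(\pi_2N)=0$, so $\dax^{whisk}_\cknot$ contributes nothing to $rel_\cknot$. We are left with
\[
rel_\cknot=\dax_\unkn(\pi_3(N\setminus\D^4)).
\]

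\emph{The kernel term.} We compute $\dax_\unkn(\pi_3(N\setminus\D^4))$ using Lemma~\ref{lem:rational-h-s}. Note that $N$ is not a simply connected rational homology sphere: if it were simply connected with $\pi_2N=\pi_3N=0$, then $N$ would be a closed homotopy $4$-sphere (or, with boundary, contractible). This edge case must be dispatched separately. In it $\pi_1N=1$, so the group $\Z[\pi_1N\setminus1]$ is zero and $\Fix=1$, and the statement reduces to $\pi_1\Emb=0$, which is Corollary~\ref{cor:1-trivial-circ}.

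Otherwise, Lemma~\ref{lem:rational-h-s} gives a short exact sequence
\[
\Z[\pi_1N]\hookrightarrow\pi_3(N\setminus\D^4)\twoheadrightarrow\pi_3N=0,
\]
so $\pi_3(N\setminus\D^4)$ is generated by the classes $g[\Phi]$. Lemma~\ref{lem:Phi} then computes the image:
\[
\dax_\unkn(\pi_3(N\setminus\D^4))=\langle g+g^{-1}[\cknot]^{-1}:g\in\pi_1N\rangle.
\]
Quotienting by this subgroup imposes exactly the relations $g\sim -g^{-1}[\cknot]^{-1}$ in $\Z[\pi_1N\setminus1]$. This matches the proof of Corollary~\ref{cor:X-from-N}, now with the $\dax_\unkn(\pi_3N)$ term vanishing.

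\emph{Expected obstacle.} The only subtle points are bookkeeping. First, the trivial element $1$ must be handled correctly. When $g=1$ or $g^{-1}[\cknot]^{-1}=1$, the expression $g+g^{-1}[\cknot]^{-1}$ lands in $\Z[\pi_1N]$ rather than $\Z[\pi_1N\setminus1]$. Since $\dax_\unkn$ takes values in $\Z[\pi_1N\setminus1]$, such terms are read with $1\sim0$, and the relation should be interpreted in that sense. Second, we need $\pi_1(N\setminus\D^4)\cong\pi_1N$, which is provided by Lemma~\ref{lem:rational-h-s}. Combining the three parts yields the claimed extension.
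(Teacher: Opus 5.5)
Your proposal is correct and follows essentially the paper's route. The paper's proof just cites Corollary~\ref{cor:main}, where $\pi_2N=0$ kills both the $\pi_2$-quotient and the Dax-whisk term, together with Corollary~\ref{cor:X-from-N} with $\pi_3N=0$; you have inlined the proof of the latter via Lemmas~\ref{lem:rational-h-s} and~\ref{lem:Phi}, and treated the simply connected edge case separately just as that proof does.
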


\begin{proof}
    This follows from Corollaries~\ref{cor:main} and~\ref{cor:X-from-N}.
\end{proof}

\begin{example}\label{ex:S1xD3}
    As a particular example of Corollary~\ref{cor:pi2-pi3-0}, we find
    \[
    \begin{tikzcd}
        \faktor{\Z[\Z\setminus1]}{g\sim -g^{-1}t^{-W_0}}\rar[tail]
        & \pi_1(\Emb(\S^1,\S^1\tm\D^3), t^{W_0})\rar[two heads]
        & \Z.
    \end{tikzcd}
    \]
    Here we denote a generator of $\pi_1(\S^1\tm\D^3)\cong\Z$ by $t$, and the ``winding number'' $W_0\in\Z$ is fixed.
    We claim that this extension is trivial, so that:
        \begin{align*}
        \pi_1(\Emb(\S^1,\S^1\tm\D^3), t^{W_0})
        & \cong \Z\oplus\faktor{\Z[\Z\setminus1]}{g\sim -g^{-1}t^{-W_0}}\\
        & \cong \Z\oplus\faktor{\Z[t,t^{-1}]}{1\sim 0, t^{-k}\sim -t^{k-W_0}}
    \end{align*}
    For this we first observe that the extension has a splitting: send $n\in\Z$ to the class $f\in \pi_1(\Emb(\S^1,\S^1\tm\D^3), t^{W_0})$ induced from $n$ full rotations of the ambient space $\S^1\tm\D^3$. Moreover, this lands in the center, as we can again rotate the ambient space while doing any other class. Thus, this semi-direct product is the product.
\end{example}

A related example is the following; see~\cite{Budney-Gabai} and~\cite[Cor.2.6]{K-Dax}.

\begin{example}
    We have
    \[
    \pi_1(\Emb(\S^1,\S^1\tm\S^3), t^{W_0})
        \cong \Z\oplus\faktor{\Z[t,t^{-1}]}{1\sim 0, t^{-k}\sim -t^{k-W_0}, t^{-1}+\cdots+t^{-W_0}\sim0}
    \]
    The only difference to the last example is that now we have $\pi_3N\cong\Z$. A generator is represented by the inclusion $i_2\colon\{pt\}\tm\S^3\hra \S^1\tm\S^3$ and we claim
    \[
        \dax_\unkn(i_2)=t^{-1}+\cdots+t^{-W_0}.
    \]
    Here the basepoint arc is $\unkn\colon\D^1\hra N\setminus\D^4$, obtained from $\cknot=t^{W_0}$ by removing its intersection with the ball $\D^4$. We take a piece of $\unkn$ and drag it around the sphere $i_2$ in a 2-parameter family, and look for double points that occur. Since $i_2$ is embedded, we will see only double points that arise from intersections of some other part of $\unkn$ and $i_2$. In other words, we are computing the equivariant intersection number $\lambda(\unkn,i_2)$. Since the winding number is $W_0$, we see precisely $|W_0|$ many intersections, and that the double point loops are $t^{-n}$ for $n=1,\dots,W_0$.
\end{example}

\begin{example}\label{ex:intro-circles}
	Here is the promised example from the introduction. The kernel of the surjection
\[\begin{tikzcd}[column sep=small]
        \pi_1\big(\Emb(\S^1,\S^1\tm\D^3\#\S^1\tm\S^3), \S^1\tm\{p_{\S^{3}}\}\big)
        \rar[two heads] 
        & \Fix_{[\cknot]}(\pi_1N)\cong\Z
\end{tikzcd}
\]
agrees with the group from Example~\ref{ex:intro-arcs}. Namely, it is the quotient of the group ring of the free group on two generators $s,t$ by the relations 
\[
	1\sim0,\quad 
	g\sim-g^{-1}t^{-1},\quad 
	s^k\sim0,\quad 
	w \sim \sum_{j=1}^{n}\e_{j} \Big(wv_j^{-1}-v_j w^{-1}\Big),
\]
for $g\in\mathbb{F}_{2}$, $k\in\Z$, and $w=s^k\prod_{i=1}^n t^{\e_{i}}s^{k_{i}}$ for $n\geq1$, $\e_i\in\{\pm1\}$, $k_i\in\Z$, with
\[
	v_j\coloneqq
	s^k\prod_{i=1}^{j-1} t^{\e_{i}}s^{k_{i}}.
\]
This follows from Remark~\ref{rem:circles-from-arcs}, Example~\ref{ex:intro-arcs} and Exercise~\ref{ex:intro-computation}.
\end{example}

\section{Beyond the fundamental group}\label{sec:beyond}

\subsection{Graspers}\label{subsec:graspers}

The operation from classical knot theory presented in Example~\ref{ex:intro}(1), that produces the trefoil from the unknot $\unkn$, is called the Gusarov--Habiro {clasper surgery} on $\unkn$ of degree $n=2$ \cite{Gusarov-main,Gusarov,Habiro}. For $n=1$ clasper surgery is simply a crossing change: ambiently connect-sum a subarc of $\unkn$ into a meridian of $\unkn$. In general, clasper surgery of degree $n\geq1$ uses $n-1$ copies of the Borromean rings and $n+1$ subarcs of $\unkn$, and depends on the information of a planar rooted binary tree with $n$ leaves. As in that example, this corresponds to taking iterated commutators in the fundamental group of the complement of a knot. 

Another viewpoint on this operation uses {gropes} of Conant and Teichner~\cite{CT1}. A grope contains a clasper as a subset, but also carries additional data; this is related to trivializations of the iterated commutator, once one of the letters is replaced by the unit.

Claspers and gropes can be generalized to all embedding spaces for which the dimension of the source is one and of the target is at least three, using the following notion of graspers~\cite{K-families}. Here we concentrate on $C=\D^1$ or $C=\S^1$ (both have basepoint $-1$) and $\dim M=4$.
\begin{enumerate}
    \item
A \emph{grasper} of degree $n$ relative to $e\colon C\hra M$ is an embedding $\TG_n\colon\ball^4\hra M$, which is disjoint from $\partial M$, and which intersects the basepoint knot $e$ only in the fixed intervals
\begin{equation}\label{eq:G-condition}
    \TG_n(\ball^4)\cap e(C)=\bigsqcup_{i=0}^ne(J_i)=\bigsqcup_{i=0}^n\TG_n(a_i),
\end{equation}
for some fixed arcs $a_i\colon\D^1\hra\ball^4$ with $0\leq i\leq n$. See Figure~\ref{fig:grasper}. 

For a grasper $\TG_n\colon\ball^4\hra M$ and $0\leq i\leq n$ let $p_i\in C$ be the point such that $e(p_i)=\TG_n(a_i(0))$. We define $g_i(\TG_n)\in\pi_1(M,e(-1))$
as the homotopy class of the loop given by $e|_{[0,p_i]}$ followed by a path from $e(p_i)$ to $e(p_0)$ inside of $\TG_n(\ball^4)$, then followed by $e|_{[0,p_0]}^{-1}$. We say that the tuple
\[
(g_1(\TG_n),\dots,g_n(\TG_n))\in(\pi_1M)^n
\]
\emph{decorates} the grasper $\TG_n$.
\begin{figure}[!htbp]
    \centering
    \includegraphics[width=0.95\linewidth]{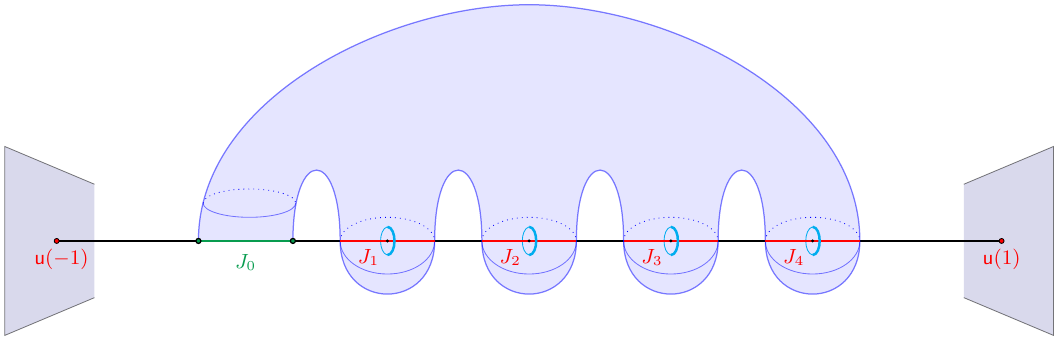}
    \caption{A grasper in a $4$-manifold $M$ relative to $e\colon\D^1\hra M$ (the horizontal line) is a $4$-ball $\TG_n$ that ``grasps'' $e$ in fixed subintervals $e(J_i)=\TG_n(a_i)$, for $0\leq i\leq n$. Here $n=4$. 
    }
    \label{fig:grasper}
\end{figure}

    \item 
Fix a rooted binary planar tree $\Gamma$ with $n$ leaves. We regard it as embedded in $\ball^4$, with the root on $a_0$ and leaves on $a_i$, as in Figure~\ref{fig:tree}. One can define a certain ``universal $\Gamma$-family''
\begin{equation}\label{eq-intro:mu}
    \mu_{\Gamma}\colon \big( \S^{n}, * \big) \ra
    \big( \Embp(\D^1,\ball^4\setminus\bigsqcup_{i=1}^na_i), a_0 \big),
\end{equation}
using iterated commutators of the loops that foliate the meridian spheres $m_i(\S^2)$ of the arc $e(J_i)$, $1\leq i\leq n$. In analogy to gropes, this also comes with additional data: the family
\begin{equation}\label{eq-intro:mu-star}
    \mu^\star_{\Gamma}\colon \I^n\times\S^{n}\to \Embp(\D^1,\ball^4),
\end{equation}
that in certain sense trivializes $\mu_{\Gamma}$. This will be used in \eqref{eq:later}.
\end{enumerate}
\begin{figure}[!htbp]
    \centering
    \includegraphics[width=0.3\linewidth]{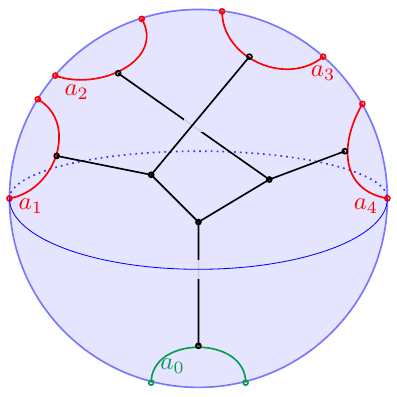}
    \caption{A tree $\Gamma$ in $\ball^4$ with four leaves. It corresponds to the word $[[a_4,a_2],[a_3,a_1]]$.
    }
    \label{fig:tree}
\end{figure}

\subsection{The realization map}\label{subsec:realmap}
Given a grasper $\TG_n\colon\ball^4\hra M$ and a tree $\Gamma$ with $n$ leaves, we define the \emph{grasper family}
\[
    e_{\TG_n,\Gamma}(\vec{\tau})\colon C\hra M,\quad \vec{\tau}\in\S^{n},
\]
by replacing the subarc $e(J_0)=\TG_n(a_0)$ with $\TG_n(\mu_{\Gamma}(\vec{\tau}))$. 

More precisely, the map $e_{\TG_n,\Gamma}$ is defined as the composite
\[\begin{tikzcd}
        \S^{n}\rar{\mu_{\Gamma}} & 
        \Embp(\D^1,\ball^4\setminus\bigsqcup_{i=1}^na_i)\rar{\TG_n\circ-} & 
        \Emb(\D^1,M\setminus\TG_n(\bigsqcup_{i=1}^na_i))\rar{e|_{C\setminus J_0}\cup-} &
        \Embp(C,M).
    \end{tikzcd}
\]

In particular, when $n=1$ we have only one tree $\Gamma=|$, and for a grasper $\TG_1\colon \ball^4\hra M$ we have a single group element $g_1(\TG_1)\in\pi_1M$. We can define a map 
    \[
    \realmap_1\colon\Z[\pi_1M]\ra\pi_1(\Embp(C,M),e)
    \]
to send $g\in\pi_1M$ to the homotopy class of $e_{\TG_1,|}\colon\S^1\to\Embp(C,M)$ for any grasper $\TG_1$ that is decorated by $g$, that is, has $g_1(\TG_1)=g$. In fact, this is precisely the family $\realmap(g)$ from Figure~\ref{fig:realmap-Dax}. The following is a consequence of Exercise~\ref{exerc:realmap}.

\begin{theorem}\label{thm:realmap}
    The homomorphism $\realmap_1$ defines an inverse for each of the two Dax invariants from Theorem~\ref{thm:Dax-dax}:
    \[\begin{tikzcd}
        \faktor{\Z[\pi_1M]}{rel}\rar[shift left=4pt]{\prealmap_1}[swap]{\cong}
        & \ker(\pi_1p)\lar[shift left=4pt]{\Dax}.
    \end{tikzcd}
    \]
\end{theorem}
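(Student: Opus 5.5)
The plan is to deduce the statement from Theorem~\ref{thm:Dax-dax} together with the computation in Exercise~\ref{exerc:realmap}, in three steps.

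\emph{Step 1: $\prealmap_1$ is a well-defined homomorphism into $\ker(\pi_1p)$.} I would first note that the class of $e_{\TG_1,|}$ depends only on the decoration $g=g_1(\TG_1)$. Two graspers of degree one with the same decoration are isotopic relative to $e$. The reason is that the $4$-ball $\TG_1(\ball^4)$ is essentially a thickening of the arc joining $e(p_1)$ to $e(p_0)$. In dimension $4$, arcs that are homotopic rel endpoints in the complement of $e$ are isotopic. Also $\pi_1(M\setminus e(C))\to\pi_1M$ is an isomorphism, since $e(C)$ has codimension $3$. Hence homotopic decorations give isotopic graspers, and isotopic graspers give homotopic families. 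The family $\prealmap(g)$ lies in $\ker(\pi_1 p)$ because the meridian $2$-sphere of $e$ bounds a normal $3$-ball, so the loop is nullhomotopic through immersions. Extending $\Z$-linearly defines $\prealmap_1$ on $\Z[\pi_1M]$.

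\emph{Step 2: compute $\Dax\circ\prealmap_1$.} By Exercise~\ref{exerc:realmap}, sweeping the twirled tip across the normal ball gives a regular nullhomotopy $H_g$ of $\prealmap(g)$. This nullhomotopy has exactly one immersed member, with a single double point whose loop is $g$ and whose sign is $+1$, so $\Dax(H_g)=g$. Theorem~\ref{thm:Dax-dax} says $\Dax$ is a well-defined homomorphism on $\ker(\pi_1p)$ with values in the quotient $\Z[\pi_1M\setminus1]/rel$. Therefore $\Dax\circ\prealmap_1=\mathrm{id}$ on generators $g\neq1$.

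For $g=1$ I would check that $\prealmap(1)$ is trivial in $\pi_1$ of embeddings. One option is to argue directly: the grasper lies in a ball meeting $e$ in two parallel arcs, so the family is trivial by a light-bulb-type isotopy. The other option is to invoke the fact that $\Dax(H_1)=1$ is absorbed by the cusp indeterminacy, combined with injectivity of $\Dax$ on $\ker(\pi_1p)$. Either way, $\prealmap_1$ sends $1\sim0$ to zero.

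\emph{Step 3: $\prealmap_1$ descends and is inverse to $\Dax$.} I would not show directly that $\prealmap_1$ kills the relations $\dax_\unkn(\pi_3)$ and $\dax^{whisk}$. Instead I would use that $\Dax$ is a bijection onto the quotient (Theorem~\ref{thm:Dax-dax}) together with $\Dax\circ\prealmap_1=\mathrm{id}$ on $\Z[\pi_1M\setminus1]$. Let $r$ be a relation, that is, an element of $\dax_\unkn(\pi_3)$ or of $\dax^{whisk}$. Then
\[
\Dax(\prealmap_1(r))=[r]=0,
\]
and injectivity of $\Dax$ gives $\prealmap_1(r)=0$. So $\prealmap_1$ factors through $\Z[\pi_1M]/rel$, and it is a right inverse of the bijection $\Dax$. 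A one-sided inverse of a bijection is its two-sided inverse, which proves the theorem.

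\emph{Main obstacle.} The hard part is the computation in Step 2. One must build $H_g$ explicitly and check that only one double point arises. One must also verify the sign convention of Algorithm~\ref{alg}: which sheet moves, and the orientation of the basis in $\{0\}\times\R^3$. The care taken in Lemma~\ref{lem:Phi} shows that this bookkeeping is delicate. I would model it on Figure~\ref{fig:realmap-Dax}(iii), choosing the normal $3$-ball so that the moving tip passes through the fixed sheet once in the positive $\tau$-direction.
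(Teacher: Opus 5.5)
Your proposal is correct and is essentially the paper's argument: the paper deduces the theorem in one line from the bijectivity of $\Dax$ in Theorem~\ref{thm:Dax-dax} together with the computation $\Dax(H_g)=g$ of Exercise~\ref{exerc:realmap}, which is exactly your Steps 2--3. The paper takes the well-definedness in your Step 1 for granted, and your Step 3 injectivity argument would also give it without the isotopy discussion, since the double-point loop and sign in the computation of $\Dax(H_g)=g$ depend only on the decoration of the grasper.
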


More generally, we can define a homomorphism 
    \begin{equation}\label{eq-def:realmap}
    \realmap_n\colon
    \Z[\Tree(n)\otm(\pi_1M)^n]\ra\pi_{n}\Embp(C,M)
    \end{equation}
by linearly extending the map that sends a decorated rooted binary planar tree 
\[
	(\Gamma,g_1,\dots,g_n)\in\Tree(n)\otm(\pi_1M)^n
\]
 to the homotopy class $[e_{\TG_n,\Gamma}]$, for any grasper $\TG_n$ of degree $n$ in $M$ relative to $e$ and decorated by $(g_1,\dots,g_n)$.
However, in order to generalize Theorem~\ref{thm:realmap} we need to first explain how one generalizes Dax's results.

\subsection{Embedding calculus}\label{subsec:emb-calc}

As explained in Appendix~\ref{app:Dax} the range in which Dax's result applies is the range in which a map
    \[
        F\colon \Big(I^n,\, I^{n-1}\tm\{0\},\, \partial I^{n-1}\tm I\cup I^{n-1}\tm \{1\}\Big) \ra
        \Big( \Immp(C,M),\, \Embp(C,M),\, e \Big)
    \]
has the track
    \[
        \wt{F}\colon I^n\tm C\ra I^n\tm M,\quad \wt{F}(v,x)=(v,F(v)(x))
    \]
with only double points, and no triple points. The Dax invariant exactly counts these double points. This means that it suffices to consider what $\wt{F}$ does on configurations of two points. To generalize this for higher degrees $n$, however, one must consider configurations of increasingly many points.
The result is a tower of spaces
\begin{equation}\label{eq-def:tower}
\begin{tikzcd}[row sep=small]
     \cdots\rar & T_{n+1}(C,M)\rar{p_{n+1}} & T_n(C,M)\rar & \cdots\rar & T_2(C,M)\rar & T_1(C,M)\\
     && \Embp(C,M)\ar{ul}{\ev_{n+1}}\uar{\ev_n}\ar[bend right]{urr}{\ev_2}\ar[bend right]{urrr}{\ev_1} &&&
\end{tikzcd}
\end{equation}
that has the following properties, due to Goodwillie, Klein, and Weiss~\cite{GW,GKW,GKmultiple}.
\begin{enumerate}
    \item There is a homotopy equivalence $T_1(C,M)\simeq\Immp(C,M)$ under which $\ev_1$ corresponds to the canonical inclusion.
    \item The map $\ev_{n+1}$ induces isomorphisms $\pi_k\Embp(C,M)\cong \pi_kT_{n+1}(C,M)$ for $k\leq n$.
    \item The layer $F_{n+1}\coloneqq\fib(p_{n+1}\colon T_{n+1}(C,M)\to T_n(C,M))$ can be described in terms of configuration spaces $\Conf_k(M)$ of $k\leq n+1$ points in $M$.
\end{enumerate}

In particular,
\begin{align*}
    \ker(\pi_1\Embp(\D^1,M)\sra\pi_1\Immp(\D^1,M))&\cong
    \ker(\pi_1T_2(\D^1,M)\sra\pi_1T_1(\D^1,M))\\
    &\cong\faktor{\pi_1F_2}{\delta(\pi_2T_1(\D^1,M))}
\end{align*}
and
\[
	F_2=\Omega^2\hofib\Big(\Conf_2(M)\xrightarrow{pr_1\tm pr_2}\Conf_1(M)\tm\Conf_1(M)\Big)\simeq\Omega^2\hofib(M\vee\S^3\xrightarrow{pr_M} M).
\]
One can show that $\pi_1F_2\cong\Z[\pi_1M]$,
and that $\delta(\pi_2T_1(\D^1,M))=\Z\oplus\dax(\pi_3M)$, recovering Theorem~\ref{thm:Dax-dax}.

More generally, we proceed as follows. Consider the diagram
     \[\begin{tikzcd}
    && \pi_nF_{n+1}\dar\\
    \Z[\Tree(n)\otm(\pi_1M)^n]\rar{\realmap_n}\ar[dashed]{rru}{\realmap^\star_n} &
    \pi_n\Embp(\D^1,M)\rar{\pi_n\ev_{n+1}}\ar{dr}[swap]{\pi_n\ev_n} &
    \pi_nT_{n+1}\dar{\pi_np_{n+1}}\\
    && \pi_nT_n
    \end{tikzcd}
    \]
where the realization map $\realmap_n$ is from~\eqref{eq-def:realmap} and the evaluation maps $\ev_n,\ev_{n+1}$ and the projection $\pi_np_{n+1}$ are from~\eqref{eq-def:tower}. The composite $\pi_n\ev_n\circ\realmap_n$ is trivial~\cite{K-families}, implying that $\pi_n\ev_{n+1}\circ\realmap_n$ lands in $\ker(\pi_np_{n+1})$. 
In fact, using the $n$-parameter nullhomotopies $\mu_{\Gamma}^\star$ of $\mu_\Gamma$ mentioned in~\eqref{eq-intro:mu-star}, we can directly construct a lift
\begin{equation}\label{eq:later}
    \realmap^\star_n\colon \Z[\Tree(n)\otm(\pi_1M)^n]\ra \pi_nF_{n+1}.
\end{equation}

\begin{theorem}[\cite{K-thesis-paper,K-families}]
    For every $n\geq1$ there is an isomorphism
    \[
    \chi_{n+1}\colon\pi_nF_{n+1}\xrightarrow{\cong} \Z[\Tree(n)\otm(\pi_1M)^n]
    \]
    such that $\chi_{n+1}\circ\realmap^\star_n=\Id$.
\end{theorem}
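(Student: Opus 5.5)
Since the layer $F_{n+1}$ is built from configuration spaces, the plan is to compute $\pi_nF_{n+1}$ directly by homotopy theory and then check that the realization $\realmap^\star_n$ hits a basis. Following the Goodwillie--Klein--Weiss description, $F_{n+1}$ is the $(n{+}1)$-fold loop space of the total homotopy fibre of the cubical diagram $S\mapsto\Conf_{|S|}(M)$, for $S\subseteq\{0,\dots,n\}$, with maps forgetting points; more precisely it is a space of sections over the simplex of such total fibres, relative to the boundary. For $C=\D^1$ the source configuration space is contractible, so $F_{n+1}\simeq\Omega^{n}\,\mathrm{tohofib}(S\mapsto\Conf_S(M))$. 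The case $n=1$ is exactly the displayed $F_2\simeq\Omega^2\hofib(M\vee\S^3\to M)$.

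\textbf{Step 1: identify the total fibre.} Iterating Fadell--Neuwirth fibrations, the fibre of forgetting one point from $\Conf_{k+1}(M)$ is $M\setminus\{k\text{ points}\}\simeq M\vee\bigvee_k\S^3$. Taking iterated fibres, the total homotopy fibre becomes the total fibre of a cube of wedges $M\vee\bigvee_{i\in S}\S^3_i$. Passing to universal covers, or equivalently replacing $M$ by $\wt M$ and each $\S^3_i$ by $\bigvee_{\pi_1M}\S^3$, I expect the total fibre to be the \emph{cross-effect} of the wedge: by Hilton--Milnor, its lowest homotopy group is the multilinear part, namely iterated Whitehead products involving each $\S^3_i$ exactly once. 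In degree $2n+1$ these are free abelian on Lie words in $n$ letters, each decorated by elements of $\pi_1M$. The Lie words in which every letter occurs once are counted by the rooted binary planar trees modulo antisymmetry and Jacobi. After accounting for the basepoint, that is, fixing the root $a_0$ as the last letter, the count matches $\Tree(n)$ with decorations $(\pi_1M)^n$. Connectivity estimates (each $\S^3$ is $2$-connected) show the total fibre is $(2n)$-connected, with $\pi_{2n+1}\cong\Z[\Tree(n)\otm(\pi_1M)^n]$. The loop shift then identifies this group with $\pi_nF_{n+1}$, and I define $\chi_{n+1}$ as this isomorphism. The degree shift deserves a check: the total fibre should be looped $n+1$ times, giving a total degree of $n+(n+1)=2n+1$.

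\textbf{Step 2: compute $\chi_{n+1}\circ\realmap^\star_n$.} Take a grasper $\TG_n$ decorated by $(g_1,\dots,g_n)$ and a tree $\Gamma$. The family $\mu^\star_\Gamma$ is supported in the ball $\TG_n(\ball^4)$ near the arcs $a_i$. Its image in the total fibre is therefore obtained by evaluating on configurations where the $i$-th point lies on $a_i$. In the $i$-th fibre direction, the meridian sphere $m_i(\S^2)$ swept by the family becomes the generator $\S^3_i$, transported by $g_i$. The iterated commutator structure of $\mu_\Gamma$ turns into the iterated Samelson/Whitehead product along $\Gamma$, and $\mu^\star_\Gamma$ supplies exactly the null-homotopies that assemble these into one element of the total fibre. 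This gives the tree $\Gamma$ decorated by $(g_i)$, so $\chi_{n+1}\circ\realmap^\star_n=\Id$ on generators. Both maps are homomorphisms, so the identity follows.

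\textbf{Main obstacle.} I expect the hardest part to be Step 2: matching the geometric nullhomotopies $\mu^\star_\Gamma$ with the algebraic cube nullhomotopies, including the signs and orderings. The case $n=1$ is the model to follow, where the result must reproduce $\Dax(H_g)=g$ from Exercise~\ref{exerc:realmap}. For general $n$, I would try induction on the tree: split $\Gamma=[\Gamma_1,\Gamma_2]$ and use that both the geometric commutator of families and the Whitehead product are compatible with the Hilton--Milnor decomposition.
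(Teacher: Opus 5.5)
Your outline is essentially the strategy behind the cited proof \cite{K-thesis-paper,K-families}: write $F_{n+1}$ as the $(n{+}1)$-fold loop space of the total fibre of the configuration-space cube, reduce by Fadell--Neuwirth to the cube of punctured manifolds, read off the bottom group $\pi_{2n+1}\cong\mathsf{Lie}(n)\otimes\Z[(\pi_1M)^n]$ via Hilton--Milnor on universal covers, and then use the trivializations $\mu^\star_\Gamma$ to identify a decorated grasper family with the corresponding iterated Samelson product. Remove the stray $\Omega^n$ in your first paragraph, since relative sections over the open $(n{+}1)$-simplex give $\Omega^{n+1}$, as your $n=1$ check and final degree count confirm; and note that Step~2 carries the real content, because a meridian $2$-sphere of $a_i$ is nullhomotopic in $M\setminus\{x_i\}$, so $\S^3_i$ only appears after combining it with the loop coordinate of the $i$-th configuration point along $J_i$ and the nullhomotopies supplied by $\mu^\star_\Gamma$, which is also where the sign conventions for the relations in $\Tree(n)$ have to be checked.
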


Let us give an application of this. We also rely on the computations in the homotopy spectral sequence for the embedding calculus tower, by Scannell--Sinha~\cite[the lower nonvanishing line in Table 2]{Scannell-Sinha} and \cite[Table 5]{Turchin-bialgebra}.

\begin{corollary}[{\cite[Cor.1.6]{K-families}}]\label{cor:Dd}
    We have isomorphisms
    \begin{align*}
        \pi_1\Embp(\D^1,\D^2)&\cong 1,\\[-7pt]
        \pi_{2}\Embp(\D^1,\D^2)&\cong\Z\quad \text{generated by the grasper family $\realmap_2(\tree)$}.
    \end{align*}  
    For any $n\geq3$ grasper families generate a subgroup
    \[\begin{tikzcd}
        {\mathcal{A}^{\mathsf{T,even}}_n\otimes\Q}\rar[hook ]{\realmap_n} & \pi_{n}\Embp(\D^1,\D^4)\otimes\Q,
    \end{tikzcd}
    \]
    that realizes all classes in the lower nonvanishing line of the Goodwillie--Weiss--Sinha spectral sequence.
    For $n\leq 6$ these inclusions hold integrally, and one has
    \[\mathcal{A}^{\mathsf{T,even}}_1=1,\,
    \mathcal{A}^{\mathsf{T,even}}_2=\Z,\,
    \mathcal{A}^{\mathsf{T,even}}_3=\Z,\, 
    \mathcal{A}^{\mathsf{T,even}}_4=1,\, 
    \mathcal{A}^{\mathsf{T,even}}_5=\Z^2,\, 
    \mathcal{A}^{\mathsf{T,even}}_6=\Z.
    \]
\end{corollary}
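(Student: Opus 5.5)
Throughout I read the target in the first two displayed isomorphisms as $\D^4$, as in the rest of the statement. The plan is to combine three ingredients: the fundamental group computation of §\ref{sec:fund-gp}, the convergence properties of the Goodwillie--Weiss tower \eqref{eq-def:tower}, and the splitting $\chi_{n+1}\circ\realmap^\star_n=\Id$ from the preceding theorem.

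\emph{The statement about $\pi_1$.} Since $\D^4$ is simply connected, Corollary~\ref{cor:1-trivial} applies directly. It gives $\pi_1\Embp(\D^1,\D^4)\cong\pi_2\D^4=0$.

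\emph{Locating the grasper families in the tower.} For $n\geq2$ I work with the tower. By property (2) of the tower, $\pi_n\ev_{n+1}$ identifies $\pi_n\Embp(\D^1,\D^4)$ with $\pi_nT_{n+1}$. That group is filtered by the kernels of the projections to $\pi_nT_k$ for $k\leq n$, and the associated graded pieces are the $E^\infty$-terms of the homotopy spectral sequence built from the layers $F_k$. The bottom piece is $\ker(\pi_np_{n+1})$, which equals the image of $\pi_nF_{n+1}\to\pi_nT_{n+1}$. By the exact sequence of the fibration, this image is $\pi_nF_{n+1}/\partial(\pi_{n+1}T_n)$, and this quotient is precisely the $E^\infty$-entry on the lower nonvanishing line.

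Since $\pi_n\ev_n\circ\realmap_n$ is trivial and $\realmap_n$ lifts through $\realmap^\star_n$, the grasper families land in this bottom subgroup. The theorem $\chi_{n+1}\circ\realmap^\star_n=\Id$ shows that $\realmap^\star_n$ is an isomorphism onto $\pi_nF_{n+1}\cong\Z[\Tree(n)]$ (here $\pi_1\D^4$ is trivial). Consequently the image of $\realmap_n$ is exactly this $E^\infty$-term, and $\realmap_n$ is injective on the corresponding quotient of $\Z[\Tree(n)]$.

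\emph{Identifying the quotient with $\mathcal{A}^{\mathsf{T,even}}_n$.} It remains to show that this quotient is $\mathcal{A}^{\mathsf{T,even}}_n$. Here I would invoke the computations of Scannell--Sinha and Turchin. They identify the lower line of the $E^2$-page with $\mathcal{A}^{\mathsf{T,even}}_n$: the $d^1$-differential into $\pi_nF_{n+1}$ imposes the antisymmetry and IHX-type relations on trees. Rationally the spectral sequence collapses along this line, since no later differentials enter or leave it. This yields the rational injection $\mathcal{A}^{\mathsf{T,even}}_n\otimes\Q\hra\pi_n\Embp(\D^1,\D^4)\otimes\Q$.

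\emph{The cases $n=2$ and $n\leq6$.} For $n=2$ the tables show that every other entry in total degree $2$ vanishes. Hence the filtration has a single piece and $\pi_2\Embp(\D^1,\D^4)\cong\mathcal{A}^{\mathsf{T,even}}_2\cong\Z$, generated by $\realmap_2$ of the unique tree with two leaves. For $n\leq6$ I would check in the tables that the relevant entries are torsion-free and that no differential can hit the lower line integrally in this range. This upgrades the inclusion to an integral one.

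\emph{Main obstacle.} The hardest step is to match the $E^\infty$-quotient of $\pi_nF_{n+1}$ with $\mathcal{A}^{\mathsf{T,even}}_n$ compatibly with $\chi_{n+1}$. Concretely, this means verifying that the boundary map $\partial(\pi_{n+1}T_n)$ consists exactly of the tree relations under $\chi_{n+1}$, including integrally for $n\leq6$. My first attempt would be to compute $\partial$ on the image of $\realmap^\star_{n+1}$ through the $\mu^\star_\Gamma$ trivializations. The remaining differentials I would cite from the spectral sequence literature rather than recompute.
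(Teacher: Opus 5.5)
Your skeleton is the paper's argument. The paper obtains this corollary only by combining $\chi_{n+1}\circ\realmap^\star_n=\Id$ with the cited spectral-sequence computations, and your formal part is correct: the image of $\realmap_n$ is the lower-line $E^\infty$-term $\pi_nF_{n+1}/\partial(\pi_{n+1}T_n)$, which injects into $\pi_nT_{n+1}\cong\pi_n\Embp(\D^1,\D^4)$, and $\pi_1=0$ via Corollary~\ref{cor:1-trivial} is fine.

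The gap is in passing from $E^2$ to $E^\infty$ on that line. Rational injectivity for \emph{every} $n\geq3$ needs all differentials $d^r$, $r\geq2$, \emph{into} $\pi_nF_{n+1}$ to vanish rationally. Your reason, ``no later differentials enter or leave it'', is circular. Only the outgoing ones vanish automatically, because their targets $\pi_{n-1}F_{n+1+r}$ are zero for connectivity reasons. The incoming ones come from $\pi_{n+1}F_{n+1-r}$ on higher lines, and no finite table controls them for all $n$. You must invoke the rational collapse at $E^2$ of the Goodwillie--Weiss--Sinha spectral sequence for $d\geq4$ (Lambrechts--Turchin--Voli\'c, Arone--Turchin), or detect the grasper classes rationally by other means.

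The integral claims likewise cannot be read off rank tables. For $n\leq 6$ you need that the \emph{integral} lower-line $E^2$-term is $\mathcal{A}^{\mathsf{T,even}}_n$ and torsion-free. Rational collapse then finishes the job: the image of each $d^r$ into a torsion-free group is torsion, hence zero. For $n=2$, your claim that every other entry in total degree $2$ vanishes is false at $E^1$, since $\pi_2T_1\cong\pi_3\S^3$ and $\pi_2F_2\cong\pi_4\S^3\cong\Z/2$. These die because $T_2\simeq *$, i.e.\ the connecting map $\Omega T_1\to F_2$ is a self-equivalence of $\Omega^2\S^3$ (on $\pi_1$ it is the cusp isomorphism). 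Then $\pi_2\Embp(\D^1,\D^4)\cong\pi_2T_3\cong\pi_2F_3\cong\Z$ directly.

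Your ``main obstacle'' is also misdiagnosed. AS and IHX are not what $d^1$ imposes: they already hold in $\pi_nF_{n+1}\cong\Z[\Tree(n)]$, the multilinear part of a free Lie algebra, free of rank $(n-1)!$. By contrast, $\mathcal{A}^{\mathsf{T,even}}_n$ is much smaller, e.g.\ rank $1$ versus $120$ for $n=6$. So $d^1$ imposes further relations, and identifying the quotient with $\mathcal{A}^{\mathsf{T,even}}_n$ is exactly the input the paper imports rather than proves. Nor can you produce these relations by computing $\partial$ on the image of $\realmap^\star_{n+1}$. That image lies in $\pi_{n+1}F_{n+2}$, and degree-$(n+1)$ grasper families are already trivial in $\pi_{n+1}T_{n+1}$, hence in $\pi_{n+1}T_n$. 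They therefore contribute nothing to $\partial(\pi_{n+1}T_n)\subseteq\pi_nF_{n+1}$; the relations come from classes in the layers $F_k$ with $k\leq n$.
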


\appendix

\section{Why are immersions ``algebraic''?}\label{app:Smale}

We sketch the proof of Smale's equivalence~\eqref{eq:Smale}. The key ingredient~\cite{Smale} is that for a fixed $0<\e<1$ the restriction map
\[
    \res_{[-1,\e]}\colon\Imm_{\partial_-}([-1,\e],X)\ra \Imm_{\partial_-}([-\e,\e],X)
\]
is a Serre fibration: any $I^k\to \Imm_{\partial_-}([-\e,\e],X)$, together with a lift on $I^{k-1}\tm\{0\}\cup (\partial I^{k-1})\tm I$, lifts to $I^k\to \Imm_{\partial_-}([-1,\e],X)$.

The second ingredient is the commutative square formed by restriction maps
\[\begin{tikzcd}
    \Immp(\D^1,X)\rar\dar & \Imm_{\partial_+}([-\e,1],X)\dar\\
    \Imm_{\partial_-}([-1,\e],X)\rar & \Imm([-\e,\e],X)
\end{tikzcd}\]
This is a pullback square, which means that the top left corner is equivalent to the limit of the rest of the diagram, the \textit{pullback}, defined as
\[
    \Big\{
        (f_-,f_+)\in
        \Imm_{\partial_-}([-1,\e],X)\tm\Imm_{\partial_+}([-\e,1],X) 
        : \res_{[-\e,\e]}(f_-)=\res_{[-\e,\e]}(f_+)\Big\}.
\]
In fact, this space is homeomorphic to $\Immp(\D^1,X)$: such a pair $(f_-,f_+)$ glues to a smooth map $f_-\cup f_+\colon\D^1\to X$ that is clearly an immersion; and conversely, any $f\in\Immp(\D^1,X)$ gives such a pair.

Next, we need the fact that if in a pullback square one of the maps into the corner is a Serre fibration, then the square is a \emph{homotopy pullback} as well. This means that the canonical map from $\Immp(\D^1,X)$ to the homotopy limit of the rest of the diagram --- the \textit{homotopy pullback}, defined as
\begin{align*}
    \Big\{(f_-,f_+,\gamma)\in
        &\Imm_{\partial_-}([-1,\e],X)\tm\Imm_{\partial_+}([-\e,1],X)\tm\Map\big([0,1],\Imm([-\e,\e],X)\big) \\
        &: \res_{[-\e,\e]}(f_-)=\gamma(0),\;\res_{[-\e,\e]}(f_+)=\gamma(1)\Big\}
\end{align*}
--- is a (weak) homotopy equivalence. 

Finally, the advantage of using homotopy limits is in the fact that we can replace the square by a homotopy equivalent one, and the homotopy limit will also change only by a homotopy equivalence. In particular, observe that both $\Imm_{\partial_-}([-1,\e],X)$ and $\Imm_{\partial_+}([-\e,1],X)$ are contractible, as we can shrink the immersion to the only fixed end, whereas immersions of $[-\e,\e]$ shrink arbitrarily close to their value at $0$, where they are determined by their derivative. Thus, we have a homotopy equivalence
\[
    \Imm([-\e,\e],X)\simeq \S TX
\]
with the unit tangent bundle of $\X$. Alternatively, we can write our square as
\[\begin{tikzcd}
    \Map_\partial(\D^1,\S TX)\rar\dar 
    & \Map_{\partial_+}([0,1],\S TX)\dar{\res_0}\\
    \Map_{\partial_-}([-1,0],\S TX)\rar{\res_0} 
    & \S TX
\end{tikzcd}\]
since here also both anti-diagonal corners are contractible. Thus, we conclude that the claimed equivalence~\eqref{eq:Smale} holds:
\[
    \Immp(\D^1,X)\simeq\Map_\partial(\D^1,\S TX).
\]
In the years that followed, Smale and Hirsch generalized this result to all immersion spaces, with the only condition $\dim C<\dim M$, or $\dim C=\dim M$ but $C$ is open.

\section{Whitney, Haefliger, Dax}\label{app:WHD}

\subsection{Whitney}\label{app:Whitney}

In 1936 Whitney published his foundational work~\cite{Whitney-manifolds} on smooth manifold topology. In particular, he showed that any smooth $c$-dimensional manifold $C$ immerses in $\R^{2c}$ with transverse self-intersections. This means that the set of double points is $0$-dimensional and discrete. To improve this to an embedding in~\cite{Whitney-n-in-2n} he proceeded as follows.

If $C$ is closed and oriented, the set of double points of such an immersion $f\colon C\imra \R^{2c}$ is finite; Whitney takes the sum of (signed) double points to define the algebraic self-intersection number $I_f\in\Z$ for $c$ even and $I_f\in\Z/2\Z$ for $c$ odd (compare to §\ref{subsec:wall}). 
By a cusp homotopy one can introduce a double point of any sign, making $I_f$ vanish. Then one can pair up double points and with a bit of work also find suitable embedded Whitney disks, so that one Whitney move removes each cancelling pair. Therefore, any smooth $c$-manifold embeds in $\R^{2c}$.

\subsection{Haefliger}\label{app:Haefliger}

Whitney's result was generalized by  Haefliger~\cite{Haefliger-plongements}, in order to find embeddings into general target manifolds, under a condition on dimensions. Namely, ``by general position'' it follows that a generic immersion $f\colon C\imra M$ has transverse self-intersections $p\in M$ of multiplicity at most
\[
    m(f,p)\leq\frac{d}{d-c}.
\]
Indeed, if $m=m(f,p)$ transverse tangent spaces of $C$ pass through $p$, then the sum of codimensions $m(d-c)$ must be at most $d$; otherwise, there would exist a normal direction in which one of them could be pushed off the others, decreasing the multiplicity.

Hence, assuming $\frac{d}{d-c}<3$, or equivalently, that we are in the \textit{metastable range}:
\begin{equation}\label{eq:metastable}
    c< \frac{2}{3}d,
\end{equation}
a generic $f$ only has transverse double points. In this setting Haefliger generalized Whitney's double point count $I_f\in\Z$, using the observation that this number can be viewed as the intersection of the image of the restricted square of $f$:
\begin{equation}\label{eq:restr-square}
    f\tm f|\colon (C\tm C)\setminus\Delta_C\ra M\tm M
\end{equation}
with the diagonal $\Delta_M$. This information can be repackaged as follows.

The map $f\tm f\colon C\tm C\to M\tm M$ is $\Z/2$-equivariant for the action that swaps the two coordinates; we write
\[
	f\tm f\in \Map_\partial^{\Z/2}(C\tm C,M\tm M).
\]  
Then for the diagonals we have
\[
    \Delta_C \subseteq  (f\tm f)^{-1} (\Delta_M),
\]
and this is an equality if and only if $f$ is an embedding. 

If $F\in \Map_\partial^{\Z/2}(C\tm C,M\tm M)$ satisfies $\Delta_C =  F^{-1} (\Delta_M)$ we say that $F$ is \textit{isovariant}; it is \textit{strongly isovariant} if additionally for each $x\in\Delta_C$ the differential $dF_x\colon T_x(C\tm C)\to T_{F(x)}(M\tm M)$ is a $\Z/2$-isovariant linear map. Denote by $\Map^{siv}_\partial(C\tm C,M\tm M)$ the subspace of $\Map_\partial^{\Z/2}(C\tm C,M\tm M)$ consisting of strongly isovariant maps, and note that we would like to homotope $f\tm f$ into it.

\begin{theorem}[Haefliger]\label{thm:Haef}
    If $c< \frac{2}{3}d-1$, then we have a 1-cartesian square
    \[\begin{tikzcd}
    \Embp(C,M)\rar[hook]\dar 
    & \Map_\partial(C,M)\dar{f\mapsto f\tm f}\\
    \Map^{siv}_\partial(C\tm C,M\tm M)\rar[hook]
    & \Map_\partial^{\Z/2}(C\tm C,M\tm M)
\end{tikzcd}\]
\end{theorem}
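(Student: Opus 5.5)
The plan is to prove the equivalent statement that the comparison map from $\Embp(C,M)$ to the homotopy pullback $P$ of the rest of the square is $1$-connected. That is, it is surjective on $\pi_0$ and, at every basepoint, surjective on $\pi_1$. Both parts will use the same construction: a relative version of Haefliger's generalized Whitney trick. The family version is the one that uses up the extra dimension in the hypothesis $c<\frac23 d-1$.

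\emph{Surjectivity on $\pi_0$.} A point of $P$ consists of a map $f\in\Map_\partial(C,M)$, a strongly isovariant $G$, and an equivariant homotopy $H\colon (C\tm C)\tm I\to M\tm M$ from $f\tm f$ to $G$. The steps are as follows.
\begin{enumerate}
\item Perturb $f$ to a generic immersion. Since $c<\frac23 d$, it has only transverse double points, as explained before \eqref{eq:metastable}.
\item Then $D_f:=(f\tm f|)^{-1}(\Delta_M)\subset (C\tm C)\setminus\Delta_C$ is a free $\Z/2$-manifold of dimension $2c-d$.
\item Use strong isovariance, together with the derivative condition along $\Delta_C$, to push $H$ equivariantly off $\Delta_M$ near $\Delta_C\tm I$.
\item Make $H$ equivariantly transverse to $\Delta_M$ on the complement of $\Delta_C\tm I$. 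Then $W:=H^{-1}(\Delta_M)$ is a free $\Z/2$-manifold of dimension $2c-d+1$ with $\partial W=D_f$, carrying a framing of its normal bundle pulled back from $\nu\Delta_M$.
\item Realise $W/\Z/2$ by a homotopy of $f$ that removes the double points, in the spirit of the Whitney move: a pair of points $(x_1,x_2,t)\in W$ tells us to drag the sheet of $C$ near $x_1$ along the track prescribed by $H$ until it separates from the sheet near $x_2$.
\item To make this precise, choose an equivariant map $W\to C\tm C\tm I$ that is an embedding. Its projection to $C\tm I$ then has image $\widehat W$ of dimension $2c-d+1$.
\item By general position, $\widehat W$ is embedded and meets itself, and the track of $f$, only as prescribed, provided $c<\frac23 d$.
\item Define the homotopy $f_t$ supported in a neighbourhood of $\widehat W$, using the framing to push one sheet across the other. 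The result $f_1$ is an embedding.
\item Finally, check that $(f_1,\text{the new data})$ lies in the same component of $P$ as $(f,G,H)$. This holds because the homotopy $f_t$ induces an equivariant homotopy of squares which, by construction, is compatible with $H$.
\end{enumerate}

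\emph{Surjectivity on $\pi_1$.} Run the same argument one parameter higher. Given a loop in $P$ based at an embedding, represent it by a path $f_s$ of maps, $s\in I$, whose ends are the basepoint embedding, together with the corresponding equivariant homotopies. Make the track $\wt f\colon C\tm I\to M\tm I$ generic. The double point manifold now has dimension $2c-d+1$, and the bordism $W$ has dimension $2c-d+2$. Apply the construction above relative to $s\in\partial I$, where the maps are already embeddings. This produces a loop of embeddings mapping to the given class. The general position requirement, that there are no triple points and that $\widehat W$ is embedded in $C\tm I\tm I$, becomes $c+1<\frac23(d+1)$ in the track, roughly. Accounting also for the extra homotopy parameter gives exactly the stated $c<\frac23 d-1$.

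The main obstacle is the realisation step: turning the abstract equivariant bordism $W$, obtained from the homotopy to a strongly isovariant map, into an actual geometric homotopy of $f$ (or of the family $f_s$). One has to control simultaneously the following:
\begin{itemize}
\item the behaviour near $\Delta_C$, where strong isovariance is needed so that no new double points appear near the diagonal;
\item the embeddedness of the projected bordism $\widehat W$;
\item the absence of triple points.
\end{itemize}
I would handle this first in the model case $M=\R^d$, following Whitney's argument recalled in Appendix~\ref{app:Whitney}, where one can use straight-line pushes. I would then globalise using tubular neighbourhoods of $f(\widehat W)$ and the framing supplied by transversality to $\Delta_M$.
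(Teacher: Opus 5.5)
The paper gives no proof of this statement. It quotes the result from Haefliger, and it is also the case $2d-3-3c\geq 1$ of Theorem~\ref{thm:GKW}. Your overall strategy is Haefliger's: realize a null-bordism of the double-point manifold by a generalized Whitney move, then repeat the construction relative to $\partial I$. As written, however, the outline has genuine gaps.

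\textbf{The reduction.} Your reformulation of $1$-connectedness is wrong. The map must be a \emph{bijection} on $\pi_0$, as the paper says right after the statement, not just a surjection. You never show that two embeddings joined by a path in $P$ are isotopic. That is Haefliger's isotopy classification, and like $\pi_1$-surjectivity it needs the strict inequality; existence alone holds for $3c\le 2d-3$. You should run your one-parameter argument for paths in $P$ between two arbitrary embeddings, not only for loops at one of them.

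\textbf{Steps~1 and~3.} These fail as stated when $d\le 2c$, which happens inside the range, e.g.\ $(c,d)=(4,8)$ or $(6,11)$. For $d<2c$ a generic perturbation of $f$ need not be an immersion. For $d\le 2c$, $H$ can be pushed off $\Delta_M$ near $\Delta_C\tm I$ only if $f$ lies in the regular homotopy class prescribed by the derivative of $G$ along $\Delta_C$, transported along $H$. So you must first apply Smale--Hirsch to this formal immersion data, to replace $f$ (and $(G,H)$) by such an immersion.

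\textbf{Steps~5--9, the main gap.} These steps are the entire content of the theorem, and they are not carried out.
\begin{itemize}
\item Step~5 is not well defined. An equivariant map $C\tm C\to M\tm M$ has the form $(x_1,x_2)\mapsto(\phi(x_1,x_2),\phi(x_2,x_1))$, with $\phi$ depending on both variables. So $H$ prescribes no track for the sheet near $x_1$.
\item What $H$ does give at $(x_1,x_2,t)\in W$ is the pair of paths $\tau\mapsto\phi_\tau(x_1,x_2)$ and $\tau\mapsto\phi_\tau(x_2,x_1)$, $0\le\tau\le t$. They run from $f(x_1)$ and $f(x_2)$ to a common point, so together they form a path in $M$ from $f(x_1)$ to $f(x_2)$.
\item Over $W/(\Z/2)$ these paths assemble into a generalized Whitney disk of dimension $2c-d+2$, whose boundary lies on $f(\mathrm{pr}_1(W))$. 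This disk must be made embedded and suitably framed, with interior disjoint from $f(C)$, before any sheet can be pushed across it.
\item For an $n$-parameter family the disk has dimension $2c-d+2+n$ and must miss the $(c+n)$-dimensional track in $M\tm I^n$. General position gives this when $n\le 2d-3-3c$. For $n=0$ this is Haefliger's existence range; for $n=1$ it is exactly the hypothesis $c<\frac{2}{3}d-1$.
\item Your Step~7 count omits this extra dimension, and $c<\frac{2}{3}d$ cannot suffice. For example, $(c,d)=(2,4)$ satisfies it, yet, as recalled after Wall's theorem in Section~\ref{subsec:wall}, embedded Whitney disks cannot be found there in general.
\item Step~9 is not automatic either. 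Consider the path of equivariant maps from $G$ to $f_1\tm f_1$ given by $H$ reversed, followed by $f_t\tm f_t$. You need to show it can be deformed rel endpoints into strongly isovariant maps, and this requires its own obstruction-theoretic argument.
\end{itemize}
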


This means that the map from $\Embp(C,M)$ to the homotopy pullback of the rest of the diagram,
\[
    \Embp^{Haef}(C,M)\coloneqq\{(f,F,\gamma):\gamma(0)=F, \gamma(1)=f\tm f\},
\]
is a bijection on $\pi_0$ and a surjection on $\pi_1$. 
Equivalently, for a fixed map $f\colon C\to M$ the map between horizontal homotopy fibers
\[
    \hofib_f(\Embp\to\Map_\partial) \ra 
    \hofib_{f\tm f}(
    \Map^{siv}_\partial\to \Map_\partial^{\Z/2}
	)
\]
is 1-connected. In particular, isotopy classes of embeddings with a given homotopy to $f$ are in bijection with homotopy classes of pairs $(F,\gamma)$ for a strongly isovariant map $F\colon C\tm C\to M\tm M$ and a path $\gamma$ from $F$ to $f\tm f$ in $\Z/2$-equivariant maps. In other words, Haefliger proves that any such choice of $(F,\gamma)$ can be used to homotope $f$ into an embedding, that is, resolve its double points.

\subsection{Dax}\label{app:Dax}

Haefliger's double point elimination methods were further developed by Jean-Pierre Dax~\cite{Dax} in the 1970s. Independently, Hatcher and Quinn~\cite{Hatcher-Quinn} used similar methods for disjunction problems --- making two (families of) submanifolds disjoint. In fact, Goodwillie, Klein, and Weiss~\cite{GKW} sketch a proof of a disjunction result and the derivation of Dax's results from it, in the following form.

\begin{theorem}[Goodwillie--Klein--Weiss]\label{thm:GKW}
    For any $1\leq c\leq d$ the square in Theorem~\ref{thm:Haef} is $(2d-3-3c)$-cartesian. That is, the map 
    \[
    \Embp(C,M)\ra\Embp^{Haef}(C,M)
    \]
     is $(2d-3-3c)$-connected.
\end{theorem}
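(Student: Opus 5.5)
The plan is to route the comparison through the second stage of the embedding calculus tower \eqref{eq-def:tower}. We aim to factor the map $\Embp(C,M)\to\Embp^{Haef}(C,M)$ as
\[
    \Embp(C,M)\xrightarrow{\ev_2}T_2(C,M)\xrightarrow{\ \simeq\ }\Embp^{Haef}(C,M),
\]
or at least to show that the second map is highly connected. We then estimate the connectivity of $\ev_2$ using the Goodwillie--Klein multiple disjunction theorem. For $k=2$ the general convergence estimate
\[
    \Embp(C,M)\to T_k(C,M)\ \text{is}\ \big(k(d-c-2)+1-c\big)\text{-connected}
\]
gives exactly $2(d-c-2)+1-c=2d-3-3c$, which is the claimed bound.

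\textbf{Step 1: identify $T_2$ with the Haefliger model.} We take $T_2(C,M)$ as the homotopy limit of $\Emb(U,M)$ over open $U\subset C$ that are unions of at most two disks. On a single disk, $\Emb(\D^c,M)\simeq\Imm(\D^c,M)$ is determined by first-order data, as in Appendix~\ref{app:Smale}. On two disjoint disks we get pairs of such data with disjoint images. Gluing these over $C$ produces three pieces of data:
\begin{enumerate}
    \item a map $f\colon C\to M$;
    \item a $\Z/2$-equivariant map $C\tm C\to M\tm M$ that sends the complement of $\Delta_C$ into the complement of $\Delta_M$;
    \item near $\Delta_C$, a fibrewise injective linear map $TC\to TM$.
\end{enumerate}
The last two items together are exactly a strongly isovariant map, and the path $\gamma$ to $f\tm f$ records the compatibility between items 1 and 2. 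This yields a comparison map
\[
    T_2(C,M)\ra\Embp^{Haef}(C,M)
    \quad\text{compatible with}\quad
    \ev_2.
\]
We will prove that it is an equivalence by showing that both sides are polynomial of degree $\leq 2$ in $C$ and agree on unions of at most two disks. For $\Embp^{Haef}(C,M)$ this holds because it is built from $\Map(C,-)$ and from equivariant maps out of $C\tm C$, and both are quadratic in $C$.

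\textbf{Step 2: connectivity of $\ev_2$.} By handle induction on $C$, it suffices to show that for the handles $A_0,A_1,A_2$ of $C$ the cube $S\mapsto\Emb(C\setminus A_S,M)$ is $(\sum_i(d-c_i-2)+1-c)$-cartesian, as the multiple disjunction theorem asserts. Taking $k=2$ then gives the estimate above.

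\textbf{Main obstacle.} We expect the main obstacle to be Step 1, in particular the behaviour near the diagonal. One must check that the homotopy limit over pairs of disks encodes exactly the strong isovariance condition, namely the tangential data, and not merely isovariance. Our first attempt will be to replace $C\tm C$ by its Fulton--MacPherson compactification, so that the two-point data becomes a $\Z/2$-map from the sphere bundle of $TC$ to the sphere bundle of $TM$. We would then compare this with Haefliger's model via a tubular-neighbourhood excision argument.
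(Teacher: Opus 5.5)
Your outline is a legitimate proof strategy, but it is not the route the paper takes. The paper does not prove the theorem; it quotes it from GKW. It only explains where $2d-3-3c$ comes from, by applying Haefliger's metastable condition to the track $\wt{F}\colon I^n\tm C\to I^n\tm M$ of an $n$-parameter family of immersions: $n+c\le\frac23(n+d)-1\iff n\le 2d-3-3c$. In that range such families can be made perfect (Lemma~\ref{lem:Dax}), and this general-position picture is what feeds Dax's double-point bordism description (Theorem~\ref{thm:Dax}) and ultimately the explicit invariant $\Dax$.

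You instead obtain the number as the $k=2$ case of the Goodwillie--Weiss convergence bound $k(d-c-2)+1-c$, after identifying $\Embp^{Haef}(C,M)\simeq T_2(C,M)$. This is in the spirit of how GKW deduce Haefliger's and Dax's results from disjunction. It ties the statement directly to the tower of Subsection~\ref{subsec:emb-calc}, where the paper recovers Theorem~\ref{thm:Dax-dax} from $T_2$, and it extends to all stages. But it rests on the deep multiple disjunction theorem and on your Step~1, which is where the real work lies. Strong isovariance is not a plain mapping-space condition, so being of degree $\le2$ only becomes visible after rewriting it as equivariant section data over the blow-up $C\langle2\rangle$ of the diagonal, as you propose. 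The track argument, by contrast, is elementary but, as presented, only heuristic.

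Step~2 needs two corrections.
\begin{enumerate}
\item The Goodwillie--Klein estimate for the $3$-cube of handles $A_0,A_1,A_2$ of indices $c_i$ is $\bigl(3-d+\sum_{i=0}^{2}(d-c_i-2)\bigr)$-cartesian, not $\bigl(\sum_i(d-c_i-2)+1-c\bigr)$-cartesian. With your constant and $c_i=c$ you would get $3(d-c-2)+1-c$, which overshoots: for $(c,d)=(1,4)$ it gives $3$ instead of $2$. It is precisely the identity $3-d+3(d-c-2)=2(d-c-2)+1-c$ that produces the bound.
\item Multiple disjunction, and hence the convergence estimate, requires $d-c\ge3$. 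In codimension $\le2$ the bound $2d-3-3c$ is $\le0$, so the statement is vacuous there except for $(c,d)=(1,3)$. In that case it asserts $\pi_0$-surjectivity, which your argument does not cover and which needs a separate argument.
\end{enumerate}
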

Where does the expression $2d-3-3c$ come from?
Consider a map
    \[
        F\colon \Big(I^n,\, I^{n-1}\tm\{0\},\, \partial I^{n-1}\tm I\cup I^{n-1}\tm \{1\}\Big) \ra
        \Big( \Immp(C,M),\, \Embp(C,M),\, e \Big)
    \]
and its track
    \[
        \wt{F}\colon I^n\tm C\ra I^n\tm M,\quad \wt{F}(v,x)=(v,F(v)(x)).
    \]
For $\wt{F}$ to be an immersion with self-intersections of multiplicity at most $2$ it suffices to have
\[
    n+c\leq \frac{2}{3}(n+d)-1
\]
as in Theorem~\ref{thm:Haef}, or equivalently
\[
    n\leq 2d-3-3c.
\]
Then, the argument from the previous section about generic immersions $f\colon C\imra M$ translates to the following multi-parameter version.
\begin{lemma}[Dax]\label{lem:Dax}
    If $d-2c\geq0$ and $0\leq n\leq 2d-3-3c$ then
    $F$ can be approximated, relative to the boundary, by a perfect map: a smooth map $F$ whose track
    has no triple points and only transverse double points.
\end{lemma}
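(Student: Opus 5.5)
The plan is to apply jet transversality to the track $\wt F\colon I^n\tm C\to I^n\tm M$, viewed as a family of immersions, and then count dimensions. This requires only a small perturbation of $F$ rel boundary, keeping it in $\Immp(C,M)$. By hypothesis, on the boundary faces $I^{n-1}\tm\{0\}$, $\partial I^{n-1}\tm I$ and $I^{n-1}\tm\{1\}$ every $F(v)$ is already an embedding, so the track has no double points there. We therefore only need to perturb in the interior, with a cutoff function that vanishes near $\partial I^n$. Since immersions form an open subset in the Whitney $C^\infty$-topology, any sufficiently small perturbation of $F$ stays within immersions. Hence $\wt F$ is automatically an immersion (indeed a fibrewise immersion over $I^n$), and the only remaining issue is the structure of its self-intersections.

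Next I would apply the multijet transversality theorem to the $2$- and $3$-fold products of $\wt F$, restricted to the off-diagonal configuration spaces. For $k$ distinct points we consider
\[
\wt F^{(k)}\colon \{(v,x_1,\dots,x_k)\in I^n\tm \Conf_k(C)\}\ra (M)^k,\quad (v,x_1,\dots,x_k)\mapsto (F(v)(x_1),\dots,F(v)(x_k)).
\]
The perturbations here come from $v$-dependent smooth maps $C\to M$ that are supported near chosen points. These families are rich enough that the parametrized map to $M^k$ is a submersion onto a neighbourhood. By the parametric transversality theorem, a generic choice makes $\wt F^{(k)}$ transverse to the small diagonal $\Delta^{(k)}_M\subset M^k$, which has codimension $(k-1)d$. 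The source has dimension $n+kc$.

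For $k=3$ the preimage has dimension $n+3c-2d$. Our hypothesis $n\le 2d-3-3c$ gives $n+3c-2d\le -3<0$, so the preimage is empty and there are no triple points. For $k=2$, transversality to $\Delta_M$ at pairs of distinct points says exactly that the two sheets of the track, of dimension $n+c$ in $I^n\tm M$, meet transversely. This is because the $I^n$-coordinate is shared, so transversality of the difference map is equivalent to transversality of the sheets in the track. In other words, the double points are transverse, and they form a manifold of dimension $n+2c-d$. This is nonnegative or empty, and the hypothesis $d-2c\ge0$ guarantees we are not in a range where double points are forced to be non-isolated in $C$ itself, which is consistent with a generic immersion having transverse double points.

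The main obstacle is the behaviour near the diagonal of $C\tm C$, where multijet transversality on $\Conf_k(C)$ gives no control. There, two nearby points of $C$ might be mapped to the same point, producing a degenerate double point accumulating on the diagonal. I would handle this via the immersion condition: since $F(v)$ is an immersion, uniformly in $v\in I^n$ (by compactness), there is an $\e>0$ such that $F(v)$ is injective on every $\e$-ball of $C$. This bound persists under small perturbations. Double points therefore lie in the compact set $\{|x_1-x_2|\ge\e\}$, and the transversality argument above applies there. Finally, making the perturbation support avoid a neighbourhood of $\partial I^n$ — possible because near those faces the family consists of embeddings, which form an open set — completes the approximation relative to the boundary.
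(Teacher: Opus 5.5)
Your proposal is correct. It is a rigorous version of the paper's own argument. The paper simply applies the general-position multiplicity bound $m(d-c)\le n+d$ to the track $\wt{F}$, borrowing Haefliger's inequality with $c,d$ replaced by $n+c,n+d$. That gives the same triple-point count $n+3c<2d$ that your fibred multijet transversality on $I^n\tm\Conf_k(C)\to M^k$ produces. Your version additionally makes sure the perturbation stays a family of immersions, controls the region near the diagonal, and keeps everything fixed on $\partial I^n$, none of which the paper spells out.

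Neither argument uses the hypothesis $d-2c\ge0$ or the full slack in $n\le 2d-3-3c$. So your sentence attributing a role to $d-2c\ge0$ is not justified and should be dropped: double points would still be transverse, just positive-dimensional, without it.
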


Using this lemma Dax generalized Whitney's self-intersection number to invariants that compute the relative homotopy groups $\pi_n(\Immp(C,M),\Embp(C,M),e)$ for $0\leq n\leq 2d-3-3c$, as follows.

First, \eqref{eq:restr-square} tells us that $F$ is a perfect map if and only if the restricted square of its track
\[
    \wt{F}\tm\wt{F}|\colon (I^n\tm C)^2\setminus\Delta_{I^n\tm C}\ra (I^n\tm M)^2
\]
is transverse to the diagonal $\Delta_{I^n\tm M}$. Consider the double point preimage set
\[
    \wt{\Delta}_{\wt{F}}\coloneqq (\wt{F}\tm\wt{F}|)^{-1}(\Delta_{I^n\tm M}).
\]
Similarly to the discussion after \eqref{eq:restr-square}, homotoping $F$ to have image in embeddings corresponds to homotoping $\wt{F}$ in order to make $\wt{\Delta}_{\wt{F}}$ empty. This is a manifold of dimension $2(n+c)-n-d=n-(d-2c)$, and homotopies of $\wt{F}$ change it by a bordism. To get the right answer we need to consider bordism classes of such manifolds not over the space $C\tm C\setminus\Delta_C$, but instead over its ``homotopical resolution'':
\[
        \wt{E}_{e}^\gamma\coloneqq\Big\{ (x_1,x_2,\rho)\in (C\tm C\setminus\Delta_C)\tm\Map([0,1],M) : \rho(0)=e(x_1),\,\rho(1)=e(x_2)\Big\}.
\]
Indeed, observe that if $\wt{F}(t_1,x_1)=\wt{F}(t_2,x_2)=p$ then we have one natural path from $e(x_1)$ to $p$ and one from $p$ to $e(x_2)$. Namely, $\wt{F}(t_1-s(1-t_1),x_1)$, for $s\in[-1,0]$ starts at $e(x_1)$ and ends at $\wt{F}(t_1,x_1)=p$, whereas $\wt{F}(t_2+s(1-t_2),x_2)$, for $s\in[0,1]$ starts at $\wt{F}(t_2,x_2)=p$ and ends at $e(x_2)$. Thus, the concatenation of these paths is a path $\rho$ such that the tuple $(x_1,x_2,\rho)$ defines a point in $\wt{E}_{e}^\gamma$.

One last subtlety is that we have to mod out by the $\Z/2$-involutions, because of the ambiguous sheet choice. Namely, define $\Delta_{\wt{F}}$ as the quotient of $\wt{\Delta}_{\wt{F}}$ under the $\Z/2$-involution, and let $E_{e}^\gamma$ be the quotient of $\wt{E}_{e}^\gamma$ by the involution $(x_1,x_2,\rho)\mapsto (x_2,x_1,\rho^{-1})$. Then, as in the above discussion, we have a map
\[
    b_{\wt{F}}\colon \Delta_{\wt{F}}\ra E_{e}^\gamma,
    \; [(t_1,t_2,x_1,x_2)]\mapsto (x_1,x_2,\rho),
    \; \rho\coloneqq 
    \begin{cases} 
        \wt{F}(t_1-s(1-t_1),x_1) & s\in[-1,0],\\
        \wt{F}(t_2+s(1-t_2),x_2), & s\in[0,1].
    \end{cases}
\]
Together with certain vector bundle data not discussed here (see \cite{Dax} or \cite[Sec.4.1.1]{KT-highd}), this defines a bordism element $( \Delta_{\wt{F}},b_{\wt{F}},\nu_{\wt{F}})\in\Omega_{n-(d-2c)}\big(E_{e}^\gamma;\vartheta_{e}\big)$.

\begin{theorem}[Dax]\label{thm:Dax}
    For $d-2c\geq0$ and $0\leq n\leq 2d-3-3c$ there are isomorphisms
    \[\pi_n\Big(\Immp(C,M),\,\Embp(C,M),\,e\Big)
    \overset{\cong}{\ra}
    \Omega_{n-(d-2c)}\big(E_{e}^\gamma;\vartheta_{e}\big).
    \]
\end{theorem}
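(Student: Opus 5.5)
The map is the one just described: a relative class $[F]$ goes to the bordism class $(\Delta_{\wt F},b_{\wt F},\nu_{\wt F})$. The proof splits into four parts. First we show this is well defined. Then we show it is a homomorphism. Then surjectivity, and finally injectivity, the last being the main point.

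\textbf{Well-definedness.} Given a representative $F$ of a class in $\pi_n(\Immp(C,M),\Embp(C,M),e)$, Lemma~\ref{lem:Dax} perturbs it rel boundary to a perfect map, using the hypothesis $n\leq 2d-3-3c$. Its restricted square is then transverse to $\Delta_{I^n\tm M}$. The preimage $\wt\Delta_{\wt F}$ is a closed manifold of dimension $n-(d-2c)$, because on the face $I^{n-1}\tm\{0\}$ we have embeddings and the other faces map to $e$, so no double points occur near the boundary. The involution on $\wt\Delta_{\wt F}$ is free, so $\Delta_{\wt F}$ is a manifold. The map $b_{\wt F}$ and the normal data $\nu_{\wt F}$ give a $\vartheta_e$-structured bordism class.

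Now take a homotopy of relative maps between two such $F$'s. It is an $(n+1)$-parameter family. We need the relative version of Lemma~\ref{lem:Dax} in parameter dimension $n+1$, which needs $n+1\leq 2d-3-3c+1$. I expect this to hold because for a bordism we only need to exclude triple points generically on a manifold with boundary. The inequality in the Haefliger/Dax bound is exactly such that triple points have negative expected dimension $3(n+1+c)-2(n+1+d)<0$ when $n\leq 2d-3-3c$. The double point set of this family then gives a bordism between the two classes.

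\textbf{Homomorphism property.} Stacking cubes along a free coordinate produces a disjoint union of double point manifolds, so the map is additive.

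\textbf{Surjectivity.} Given a $\vartheta_e$-manifold $V\to E^\gamma_e$ of dimension $n-(d-2c)$, we realize it by a family constructed from $e$ by finger moves. Start from the constant family $e$. Along a tubular neighborhood of $V\tm I$ in parameter space, drag a small arc of $e$ near $x_1$ along the path $\rho$ to the point $e(x_2)$ and push it through the other sheet. This produces double points transverse with normal data prescribed by $\nu$. The end of the family ($\tau$ at the face $I^{n-1}\tm\{0\}$) is embedded, because the finger has passed completely through. This is the multi-parameter analogue of Exercise~\ref{exerc:realmap}. The grasper families of \S\ref{subsec:realmap} give the generators when $V$ is a point.

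\textbf{Injectivity (the main obstacle).} Suppose the double point manifold bounds a $\vartheta_e$-manifold $W\to E^\gamma_e$. We must deform $F$ rel boundary into $\Embp(C,M)$. The plan is a parametrized Whitney trick. The map to $E^\gamma_e$ over $W$ gives, for each point, a homotopy between the two sheets' paths. Together with the bundle data, this yields a family of Whitney disks parametrized by $W$, living in the track $I^{n+1}\tm M$. General position makes these disks embedded and disjoint from other sheets, since the dimension count is again $n+1\leq 2d-2-3c$ with $d-2c\geq 0$. Performing the Whitney moves along this family removes all double points and yields the nullhomotopy.

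The delicate part is matching the normal-bundle data $\vartheta_e$ with the framing required for the Whitney move, in a parametrized way. Here I would follow Hatcher--Quinn's disjunction argument, alternatively deriving the whole statement from Theorem~\ref{thm:GKW} by identifying the relative homotopy groups of the Haefliger square with equivariant bordism via the Pontryagin--Thom construction. If the direct approach stalls, I would take that second route.
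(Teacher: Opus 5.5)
\paragraph{What the paper provides.} The survey does not prove this theorem. It only constructs the map $[F]\mapsto(\Delta_{\wt F},b_{\wt F},\nu_{\wt F})$ from a perfect representative (Lemma~\ref{lem:Dax}), attributes the isomorphism to Dax, and mentions the derivation from Theorem~\ref{thm:GKW}. So there is no in-paper argument to match, and your outline follows the classical Dax/Hatcher--Quinn strategy.

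\paragraph{The first three steps.} Your well-definedness and additivity steps reproduce the paper's construction and are fine. For the $(n+1)$-parameter homotopy you only need that triple points have expected dimension $n+1+3c-2d\le -2$. Note, however, that triple points alone would permit up to $2d-3c-1$ parameters, so the bound $n\le 2d-3-3c$ is not ``exactly'' the triple-point bound; it comes from injectivity, as explained below. Also, for $n\le 1$ the source is only a pointed set, so there is no homomorphism to check.

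Your surjectivity step by finger families is the right idea, but two things are missing as written:
\begin{enumerate}
\item You need an immersion of $V$ into $I^{n-1}$ whose normal bundle is the complement of the approach line in the rank-$(d-2c)$ bundle of local translations. Getting it requires destabilising the $\vartheta_e$-structure.
\item The construction does not cover $d-2c\le 1$. For example, when $d=2c+1$ and $n\ge 2$ it asks for a codimension-zero immersion of a closed manifold.
\end{enumerate}

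\paragraph{The genuine gap: injectivity.} Injectivity is the real content of Dax's theorem, and your sketch does not supply it. It is not clear what your ``family of Whitney disks parametrized by $W$'' is.
\begin{itemize}
\item At a generic parameter of $\Delta_{\wt F}$ the immersion has a single double point, so there is no Whitney disk inside a level $\{v\}\tm M$.
\item A Whitney disk in the track $I^n\tm M$ (or $I^{n+1}\tm M$) joining double points at different parameters is not contained in a level. Pushing a sheet across it produces an embedding $I^n\tm C\to I^n\tm M$ that is no longer level-preserving. That is a concordance, not a homotopy of $F$ through maps of triples into $(\Immp(C,M),\Embp(C,M),e)$.
\end{itemize}

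What is needed is a relative, level-preserving version of your surjectivity construction. Place $W$ in $I^n\tm I$ with $\partial W=\Delta_{\wt F}\tm\{0\}$. Then build an $(n+1)$-parameter family $H$ such that
\begin{itemize}
\item $H|_{I^n\tm\{0\}}=F$, with the correct boundary behaviour,
\item $H|_{I^n\tm\{1\}}$ lies in $\Embp(C,M)$,
\item the double point set of $H$ is exactly $W$.
\end{itemize}
Levelwise, the ``Whitney disk'' degenerates to the arc $\rho_w$ recorded by $W\to E^\gamma_e$, along which a finger is retracted. The general position needed is that this $(k+1)$-parameter family of arcs, with $k=n-(d-2c)$, misses the sheets. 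That condition reads $(k+1)+1+c<d$, which is precisely $n\le 2d-3-3c$; this is where the range is really used.

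To glue this construction to $F$, you must also:
\begin{itemize}
\item homotope $F$ near $\Delta_{\wt F}$, rel boundary and without changing its bordism data, into the standard finger model, with one sheet stationary and the other moving along $b_{\wt F}$ (compare step (3) of Algorithm~\ref{alg});
\item match the framings with $\vartheta_e$.
\end{itemize}
None of this appears in the proposal.

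\paragraph{The fallback routes.} Deferring to Hatcher--Quinn or to Theorem~\ref{thm:GKW} does not close the gap. The latter still requires identifying the relevant relative homotopy groups with $\Omega_{n-(d-2c)}(E^\gamma_e;\vartheta_e)$ by an equivariant Pontryagin--Thom argument, which is itself a substantial part of the theorem.
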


For $n=2$, $c=1$, $d=4$, one can show $\Omega_0(E_{e}^\gamma;\vartheta_{e})\cong\Z[\pi_1M]$ and recover Theorem~\ref{thm:Dax-dax}; see~\cite{KT-highd}.
\begin{remark}
    One can also relate the Dax invariant to the Wall invariant in another way: in~\cite[Rem.4.19]{KT-highd} we show that when $\dim M=2\dim C>4$ the isomorphism from Theorem~\ref{thm:Dax} agrees with the Wall invariant from §\ref{subsec:wall}:
\[
    \mu\colon
    \faktor{\pi_0\Imm_\partial(C,M)}{\pi_0\Emb_\partial(C,M)}\ra
    \faktor{\Z[\pi_1M]}{g\sim (-1)^{\dim C}g^{-1}}.
\]
\end{remark}

\printbibliography

\end{document}
